\newbox\pullbackbox
\def\pullback{\copy\pullbackbox}
\newbox\pushoutbox
\def\pushout{\copy\pushoutbox}
\newbox\pushoutboxh
\newbox\pushoutboxv
\def\pushoutv{\copy\pushoutboxv}
\newbox\pushoutboxw
\def\pushoutw{\copy\pushoutboxw}
\newtheorem{proposition}{Proposition}[section]
\newtheorem{lemma}[proposition]{Lemma}
\newtheorem{corollary}[proposition]{Corollary}
\newtheorem{theorem}[proposition]{Theorem}
\theoremstyle{definition}
\newtheorem{definition}[proposition]{Definition}
\newtheorem{example}[proposition]{Example}
\newtheorem{examples}[proposition]{Examples}
\theoremstyle{remark}
\newtheorem{remark}[proposition]{Remark}
\newtheorem{remarks}[proposition]{Remarks}
\newcommand{\thlabel}[1]{\label{th:#1}}
\newcommand{\thref}[1]{Theorem~\ref{th:#1}}
\newcommand{\selabel}[1]{\label{se:#1}}
\newcommand{\seref}[1]{Section~\ref{se:#1}}
\newcommand{\lelabel}[1]{\label{le:#1}}
\newcommand{\leref}[1]{Lemma~\ref{le:#1}}
\newcommand{\prlabel}[1]{\label{pr:#1}}
\newcommand{\prref}[1]{Proposition~\ref{pr:#1}}
\newcommand{\colabel}[1]{\label{co:#1}}
\newcommand{\coref}[1]{Corollary~\ref{co:#1}}
\newcommand{\relabel}[1]{\label{re:#1}}
\newcommand{\reref}[1]{Remark~\ref{re:#1}}
\newcommand{\exlabel}[1]{\label{ex:#1}}
\newcommand{\exref}[1]{Example~\ref{ex:#1}}
\newcommand{\eqlabel}[1]{\label{eq:#1}}
\newcommand{\equref}[1]{(\ref{eq:#1})}
\newcommand{\Vect}{{\sf Vect}}
\newcommand{\coev}{{\rm coev}}
\newcommand{\ev}{{\rm ev}}
\newcommand{\tto}{\twoheadrightarrow}
\newcommand{\Hom}{{\sf Hom}}
\newcommand{\coker}{{\sf coker}\,}
\renewcommand{\ker}{{\sf ker}\,}
\newcommand{\im}{{\sf Im}\,}
\newcommand{\Mod}{{\sf Mod}}
\newcommand{\PMod}{{\sf PMod}}
\newcommand{\qPMod}{{\sf qPMod}}
\newcommand{\lPMod}{{\sf lPMod}}
\newcommand{\gPMod}{{\sf gPMod}}
\newcommand{\PCD}{{\sf PCD}}
\newcommand{\Span}{{\sf Span}}
\renewcommand{\span}{{\sf span}}
\newcommand{\Par}{{\sf Par}}
\newcommand{\pa}{{\sf par}}
\newcommand{\can}{{\ul{\sf can}}}
\def\ot{\otimes}
\def\bul{\bullet}
\newcommand{\bk}[1]{\left< #1\right>}
\def\AA{{\mathbb A}}
\def\CC{{\mathbb C}}
\def\NN{{\mathbb N}}
\def\ZZ{{\mathbb Z}}
\newcommand{\Bb}{\mathcal{B}}
\newcommand{\Cc}{\mathcal{C}}
\newcommand{\Dd}{\mathcal{D}}
\newcommand{\Ee}{\mathcal{E}}
\newcommand{\Mm}{\mathcal{M}}
\newcommand{\Oo}{\mathcal{O}}
\newcommand{\Zz}{\mathcal{Z}}
\def\text#1{{\rm {\rm #1}}}
\def\ol{\overline}
\def\ul{\underline}
\def\dul#1{\underline{\underline{#1}}}
\def\Set{\dul{\rm Set}}
\def\colim{{\rm colim\,}}
\def\lim{{\rm lim\,}}
\def\Alg{{\sf Alg}}
\begin{document}
\title{Geometrically Partial actions}
\author{Jiawei Hu} 
\address{Departement of Mathematical Sciences, East China Normal University, NO.500,Dongchuan Road, Shanghai, China}
\address{D\'epartement de Math\'ematiques, Facult\'e des sciences, Universit\'e Libre de Bruxelles, Boulevard du
Triomphe, B-1050 Bruxelles, Belgium}
\email{hhjjwilliam@gmail.com}
\author{Joost Vercruysse}
\address{D\'epartement de Math\'ematiques, Facult\'e des sciences, Universit\'e Libre de Bruxelles, Boulevard du
Triomphe, B-1050 Bruxelles, Belgium}
\email{jvercruy@ulb.ac.be}

\begin{abstract}
We introduce ``geometric'' partial comodules over coalgebras in monoidal categories, as an alternative notion to the notion of partial action and coaction of a Hopf algebra introduced by Caenepeel and Janssen.
The name is motivated by the fact that our new notion suits better if one wants to describe phenomena of partial actions in algebraic geometry. 
Under mild conditions, the category of geometric partial comodules is shown to be complete and cocomplete and the category of partial comodules over a Hopf algebra is lax monoidal. 
We develop a Hopf-Galois theory for geometric partial coactions to illustrate that our new notion might be a useful additional tool in Hopf algebra theory.
\end{abstract}

\maketitle

\tableofcontents

\section*{Introduction}

The coordinate algebras of algebraic groups provide classical examples of Hopf algebras and the interaction between Hopf algebra theory and algebraic geometry that arises from this construction has showed to be very fruitful for both worlds. 
With the rise of quantum groups in 80s of the the 20th century, deformations of Hopf algebras associated to algebraic groups have inspired the field of non-commutative (algebraic) geometry, where non-commutative algebras play the role of non-commutative spaces and (non-commutative, non-cocommutative) Hopf algebras coacting on these algebras play the role of symmetry groups of these spaces. 

The aim of the present paper is to introduce a new type of symmetries in (non-commutative) algebraic geometry, that correspond to partial group actions.

It is well-known that (usual) actions of a (discrete) group $G$ on a $k$-algebra $A$ are in correspondence with semi-direct product structures, or smash product structures, on $A\ot kG$. In order to describe certain algebras (such as Toeplitz algebras) as a generalized smash product, the notion of a partial group action was introduced in the setting of $C^*$-algebras by Exel \cite{Exel}. Roughly, a partial action of a group $G$ on an object $X$ associates to each element of $X$ an isomorphism between two appropriate subobjects of $X$. In case these subobjects always coincide with the whole object $X$, the action is a usual (or as we will call them from now on: global) group action. Immediate examples of these partial actions can be obtained by restricting a (global) action to an arbitrary subobject of $X$. Since its introduction, this notion of partial group action and the related notion of partial representation, has been investigated from a purely algebraic point of view and many interesting results have been obtained, see e.g.\ \cite{DE}, \cite{DEP}. 

A first attempt to bring partial actions from the setting of groups to the setting of Hopf algebras was made by Caenepeel and Janssen in \cite{CJ}. This approach has shown to be very successful in the sense that many classical Hopf-algebraic results appear to have a partial counterpart.

However, in this initial approach, several aspects of the theory remained unclear. For example, the definition of Caenepeel and Janssen only allowed to describe partial (co)actions of Hopf algebras on other (co)algebras. It was not possible to define partial actions on vector spaces nor to define partial actions of algebras other than Hopf algebras.
A next step was made in \cite{ABV}, where it was shown that, in analogy with classical actions of Hopf algebras, partial actions can be viewed as internal algebras in an appropriate monoidal category. However, in contrast to the classical case, the monoidal category in play is no longer the usual monoidal category of representations (or modules) of the Hopf algebra $H$, but rather the category of partial representations which coincides with the category of representations over a newly constructed Hopf algebroid $H_{par}$. Lately, it was shown in \cite{ABV2} how partial representations can be globalized and the partial 
representations of Sweedler's 4-dimensional Hopf algebra were completely classified.
A recent development in the theory of partial actions, is the approach of \cite{Timmerman}, where the initial theory of partial actions over $C^*$-algebras is merged with the Hopf-algebra setting, in the study of partial actions of $C^*$-quantum groups. 

Furthermore, 
it turns out that if one studies partial actions of Hopf algebras that arise from algebraic groups, the partial actions are not what one would expect. 
Indeed, it was observed in \cite{BV} that a partial coaction of a Hopf algebra $\Oo(G)$, which is the coordinate algebra of an algebraic group $G$, on an algebra $\Oo(X)$, which is the coordinate algebra of an algebraic space $X$, is always global unless $X$ is a disjoint union of non-empty subspaces. The spirit of partial actions would however also ask for more involved examples, where the elements of the algebraic group $G$ act as an isomorphism between arbitrary algebraic subspaces of $X$. Indeed, as we mentioned before, examples of partial actions can be constructed by restricting global actions. If the algebraic group $G$ acts (globally) on an algebraic variety $X$, we expect that the same group acts partially on arbitrary (not necessarily irreducible) subvarieties of $X$. For a more concrete example, one could consider two circles in the real plane intersecting in two points. From the global point of view, such a configuration has only few symmetries (or more precisely there are very few isometries of the plane that restrict to this union of circles). Nevertheless, each of the individual circles has a lot of symmetries. Partial actions allow to describe at once the (few) global symmetries of the pair of circles, and the (many) symmetries of the individual circles, as well as combinations of these.

To describe this kind of phenomena from a Hopf-algebraic point of view, we propose an alternative definition of partial (co)actions of Hopf algebras, that we call {\em geometric partial (co)actions} and that also allows us to bring partial action into the realm of non-commutative geometry as the algebraic structure to describe partial symmetries. 

To arrive at this goal, we will give in \seref{groups} a detailed study of partial actions of groups on sets, and provide a new approach to these. This approach is motivated by category theory, where partial morphisms have an interpretation as spans where one of the legs is a monomorphism. Given any category $\Cc$, one can build this way a bicategory of partial morphisms, which is a subbicategory of the category of spans over $\Cc$. A partial action of a group $G$ on an object $X$ is then noting else than a lax functor from $G$ into the endo-hom category of partial morphisms from $X$ to $X$. 

Based on this viewpoint, we generalize the notion of partial action of a group to partial (co)actions of (co)algebras in arbitrary categories with pullbacks (respectively pushouts). More precisely, given a coalgebra $H$ in the monoidal category $\Cc$, a {\em partial comodule datum} for $H$ is a quadruple $(X,X\bul H,\pi,\rho)$, where $\pi:X\ot H\to X\bul H$ is an epimorphism and $\rho:X\to X\bul H$ is a morhpism in $\Cc$. By considering 3 levels of strictness for the coassociativity condition on a given partial comodule datum, we consider then 3 versions of partial comodules: quasi, lax and geometric partial comodules. The name for the latter version is motivated by the fact that the above mentioned examples of partial actions of algebraic groups arise exactly as those `geometric partial comodules'. The initial partial actions of groups coincide with geometric partial actions of groups, viewed as coalgebras in the opposite of the category of sets. In case of arbitrary (Hopf) algebras, this new notion covers the one of Caenepeel and Janssen, but allows to go far beyond the notions of partial actions and partial representations as discussed above. Finally, our definition allows to consider partial (co)modules over arbitrary (co)algebras, where before it was only possible to consider such structures over Hopf algebras (with bijective antipode).

Although partial comodules are only a laxified version of classical comodules, they share surprisingly many properties with classical comodules. In particular, we show that a version of the fundamental theorem for comodules is still valid for geometric partial comodules and the category of partial comodules is complete and cocomplete. All this is shown in \seref{coalgebra}.

One of the key features of Hopf algebras, is that their categories of (co)modules have a natural monoidal structure, inherited by the monoidal structure of the base category wherein the considered Hopf algebra is defined. At this point, the theory of (geometric) partial modules becomes different from the global theory. Indeed, although the category of quasi partial comodules over a bialgebra is shown to posses an associative monoidal structure (with an oplax unit), the more interesting category of geometric partial comodules has only an {\em oplax} monoidal structure \cite{Leinster} (see \seref{bialgebra}). By definition an oplax monoidal structure requires the existence of $n$-fold tensor products, along with suitable coherence conditions. Where the tensor product of global comodules over a Hopf $k$-algebra is given by the tensor product of the underlying vector spaces, the vector space tensor product of two geometric partial comodules is in general no longer a geometric partial comodule. Therefore, their tensor product is defined as the biggest geometric quotient of the underlying vector space product.

Using this oplax monoidal structure, one can give meaning to an algebra in the category of geometric partial comodules. We discuss these `geometric partial comodule algebras' and initiate a Hopf-Galois theory for them in \seref{Galois}.

Some remarks on notation: given an object $X$ in a category $\Cc$, we denote the identity morphism on $X$ in $\Cc$ by $id_X$ or shortly by $X$. If $\Cc$ is a monoidal category with tensor product $\ot$ and monoidal unit $k$, then we denote the right unit constraint as $r_X:X\to X\ot k$ (classically, the reversed direction is used, but it turns out that this direction is more convenient for our work).

\section{A categorical reformulation of partial group actions}\selabel{groups}

\subsection{The classical definition of a partial group action}

Let $G$ be a group and $X$ a set. A {\em partial action datum} of $G$ on $X$ is a couple $(X_g,\alpha_g)_{g\in G}$, where
\begin{itemize}
\item $\{X_g\}_{g\in G}$, a family of subsets of $X$ indexed by the group $G$;
\item $\{\alpha_g:X_{g^{-1}}\to X\}_{g\in G}$ a family of maps indexed by the group $G$;
\end{itemize}

Recall from \cite{Exel} that a {\em partial action} $\alpha$ of $G$ on $X$ is a partial action datum $(X_g,\alpha_g)_{g\in G}$ that satisfies the following axioms
\begin{enumerate}[{({PA}1)}]
\item $X_e=X$ and $\alpha_e=id_X$, where $e$ denotes the unit of $G$;
\item $\alpha_g(X_{g^{-1}}\cap X_h)\subset X_g\cap X_{gh}$;
\item $\alpha_h\circ \alpha_g=\alpha_{hg}$ on $X_{g^{-1}}\cap X_{(hg)^{-1}}$.
\end{enumerate}

Remark that thanks to the second axiom (PA2), the third axiom (PA3) makes sense, since 
\begin{linenomath}
\[\alpha_h\circ \alpha_g(X_{g^{-1}}\cap X_{(hg)^{-1}})\subset \alpha_h(X_g\cap X_{h^{-1}})\subset X_{hg}\cap X_{h}\]
\end{linenomath}
and 
\begin{linenomath}
$$\alpha_{hg}(X_{g^{-1}}\cap X_{(hg)^{-1}})\subset X_h\cap X_{hg}.$$
\end{linenomath}
Furthermore, combining (PA2) and (PA3), we find that 
\begin{linenomath}$$X_g\cap X_{gh}=\alpha_g\circ \alpha_{g^{-1}}(X_g\cap X_{gh})\subset \alpha_g(X_{g^{-1}}\cap X_h)$$
\end{linenomath}
and therefore, we can deduce the stronger axiom 
\begin{itemize}
\item[(PA2')] $\alpha_g(X_{g^{-1}}\cap X_h)= X_g\cap X_{gh}$.
\end{itemize}
If we take in particular $h=e$, then we find that $\alpha_g(X_{g^{-1}})=X_g$. 
Moreover, since $\alpha_g\circ \alpha_{g^{-1}}(x)=x$ for all $x\in X_g$, we find that each map $\alpha_g$ induces a bijection $\alpha_g:X_{g^{-1}}\to X_g$. This last fact is often supposed as part of the definition of a partial action.

Many examples of partial actions have been observed in recent literature. It makes no sense to repeat them here, however, we will gave a few exemplary ones, which will make the transition to some of the new results in this paper more easy.

\begin{examples}\exlabel{paract}
\begin{enumerate}
\item Consider a (global) action of the group $G$ on a set $Y$, and let $X\subset Y$ be any (non-empty) subset of $Y$. Then $G$ acts partially on $X$, by defining $X_g:=\{x\in X~|~g^{-1}\cdot x\in X\}$ and defining $\alpha_g:X_{g^{-1}}\to X_g,\ \alpha_g(x)=g\cdot x$.
\item As a particular case of the previous one, we consider the following geometric example. Let $G=(\AA^2,+)$ be the group of $2$-dimensional affine translations. This group acts strictly transitive on the affine plane $\AA^2$. Consequently, this group acts partially on any subset of $\AA^2$. In one of the next sections, we will discuss in more detail the case of the partial action of this group on two intersecting lines.
\item Consider the additive group $\ZZ$. For any $z\in\ZZ$ with $z\ge 0$ we define its domain $X_{-z}=\NN$ and its action $\alpha_z:\ZZ\to \ZZ, x\mapsto x+z$. On the other hand for each $z<0$ we define its domain $X_{-z}=\{x\in \ZZ ~|~x\ge -z\}$ and its action $\alpha_z:\ZZ\to \ZZ, x\mapsto x+z$. Then one easily verifies this defines a partial action which is obtained by restricting the action of $\ZZ$ on itself to $\NN$.
\end{enumerate}
\end{examples}

\subsection{Lax and quasi partial actions}

As we explained, the axiom (PA2) in the definition of partial actions is designed to make sense of axiom (PA3) which expresses the associativity. However, this axiom can be weakened further.
\begin{definition}
Let $G$ be a group, $X$ a set and $\alpha=(X_g,\alpha_g)$ be a partial action datum. We say that $\alpha$ is a {\em lax} partial action of the following axioms hold
\begin{enumerate}[{({LPA}1)}]
\item $X_e=X$ and $\alpha_e=id_X$, where $e$ denotes the unit of $G$;
\item $X_{g^{-1}}\cap \alpha_g^{-1}(X_{h^{-1}})\subset X_{(hg)^{-1}}$.
\item $\alpha_h\circ \alpha_g=\alpha_{hg}$ on $X_{g^{-1}}\cap \alpha_g^{-1}(X_{h^{-1}})$.
\end{enumerate}
\end{definition}

Axiom (LPA2) tells that if $x\in X_{g^{-1}}$ and $\alpha(g)(x)\in X_{h^{-1}}$, then $x\in X_{(hg)^{-1}}$ and therefore axiom (LPA3) makes sense. As one can easily verify, any partial action is a lax partial action and the converse holds if and only if $\alpha_g(X_{g^{-1}})\subset X_g$ (see \prref{laxvsgeom}).
The following example shows that lax partial actions are a proper generalization of partial actions.

\begin{example}
This example is a variation of \exref{paract} (3). Consider the additive group $\ZZ$. For any $z\in\ZZ$ with $z\ge 0$ we define its domain $X_{-z}=\ZZ$ and its action $\alpha_z:\ZZ\to \ZZ, x\mapsto x+z$. On the other hand for each $z<0$ we define its domain $X_{-z}=\{x\in \ZZ ~|~x\ge -z\}$ and its action $\alpha_z:X_{-z}\to \ZZ, x\mapsto x+z$. Then one can verify that this is indeed a lax partial action and moreover it is not a partial action, since $\rho_{z}:X_{-z}\to X_z=\ZZ$ is not a bijection for any $z<0$. 
\end{example}

For sake of completeness, we also state another weakening of the definition of partial action, which is, by our opinion, naturally the most general version of a partial action.

\begin{definition}
Let $G$ be a group, $X$ a set and $\alpha=(X_g,\alpha_g)$ be a partial action datum. We say that $\alpha$ is a {\em quasi} partial action of the following axioms hold
\begin{enumerate}[{({QPA}1)}]
\item $X_e=X$ and $\alpha_e=id_X$, where $e$ denotes the unit of $G$;
\item $\alpha_h\circ \alpha_g=\alpha_{hg}$ on $X_{g^{-1}}\cap \alpha_g^{-1}(X_{h^{-1}})\cap X_{(gh)^{-1}}$.
\end{enumerate}
\end{definition}

Remark that in this definition, we ask associativity to hold exactly there where both $\alpha_h\circ \alpha_g$ and $\alpha_{hg}$ make sense. The following construction shows that quasi partial actions properly generalize lax and usual partial actions.

\begin{example}
Let $G$ be a group acting (globaly) on a set $X$. For any $g\in G$ consider an arbitrary subset $X_{g}\subset X$ and let $\alpha_g:X_{g^{-1}}\to X$ be the restriction of the action of $g$ on $X$. Then this defines a quasi partial action of $G$ on $X$.
\end{example}

\subsection{Partial actions and spans}

We will now reformulate the definition of a partial action, making no explicit reference to the elements of the set or the group, but stating everything internally in the category $\Set$ of sets. This way, the definition can be easily lifted to any (monoidal) category (with pullbacks). As we will show, quasi and partial actions arise naturally in this context.\\

Recall that in any category $\Cc$, a {\em span} from $X$ to $Y$ is a triple $(A,f,g)$, where $A$ is an object of $\Cc$ and $f:A\to X$ and $g:A\to Y$ are two morphisms of $\Cc$. If $\Cc$ has pullbacks and $(A,f,g), (B,h,k)$ are spans from $X$ to $Y$ and from $Y$ to $Z$ respectively, then one constructs a new span, called the {\em composition span}, from $X$ to $Z$ by the following pullback construction:
\begin{linenomath}$$\xymatrix{
&& P \ar[dl]_p \ar[dr]^q \ar@{}[d]|<<<{\pullback}\\
 & A \ar[dl]_f \ar[dr]^g && B \ar[dl]_h \ar[dr]^k\\
X && Y && Z
}$$
\end{linenomath}
which we will denote as $(B,h,k)\bul (A,f,g)$. 
Given two spans $(A,f,g), (B,h,k):X\to Y$, a {\em morphism of spans} $\alpha:(A,f,g)\to (B,h,k)$ is a map $\alpha:A\to B$ such that the following diagram commutes
\begin{linenomath}$$
\xymatrix{
& A \ar[dl]_f \ar[dd]^\alpha \ar[dr]^g\\
X && Y\\
& B \ar[ul]^h \ar[ur]_k
}$$
\end{linenomath}
In this way, we obtain a bicategory $\Span(\Cc)$, whose $0$-cells are the objects of $\Cc$, $1$-cells are spans and $2$-cells are morphisms of spans. We can also consider the (usual) category $\span(\Cc)$, whose objects are the objects of $\Cc$ and whose morphisms are isomorphism classes of spans.

In what follows, we will use the following variation on the usual category of spans.
\begin{definition}
By a {\em partial morphism} from $X$ to $Y$ in a category $\Cc$, we mean a morphism $(A,f,g)$ in the category $\Span(\Cc)$, with the additional property that $f:A\to X$ is a monomorphism. 
By $\Par(\Cc)$ we denote the subbicategory of $\Span(\Cc)$, with the $0$-cells as $\Span(\Cc)$ (and $\Cc$), whose $1$-cells are given by partial morphisms in $\Cc$. By $\pa(\Cc)$ we denote the corresponding subcategory of $\span(\Cc)$.
\end{definition}

Remark that the above definition of $\Par(\Cc)$ makes sense since the pullback of a monomorphism is a monomorphism. Moreover, if $\alpha,\beta:(A,f,g)\to (B,h,k)$ are $2$-cells in $\Par(\Cc)$ then $\alpha=\beta$ since $h\circ \alpha=f=h\circ \beta$ and $h$ is a monomorphism. Hence $\Par(\Cc)$ is locally a poset. In the particular case of $\Par(\Set)$, a there is a morphism of spans $\alpha:(A,f,g)\to (B,h,k)$ if and only if $A$ is a subset of $B$ and $g$ is the restriction of $k$ to $A$.

We will denote a partial morphism from $X$ to $Y$ by a dotted arrow $\xymatrix{X\ar@{.>}[r] & Y}$. When we  consider a partial map as a triple $(A,f,g)$, we will often omit to write explicitly the first map $f$, as it is an inclusion and supposed to be known if we know the object $A$, ie. we will write $(A,f,g)=(A,g)=g$. \\

\begin{lemma}
Let $G$ be a group and $X$ a set. Then there is a bijective correspondence between
\begin{enumerate}[(i)]
\item partial action data of $G$ on $X$;
\item partial morphisms $G\times X\to X$;
\item maps $G\to \Par(X,X)$.
\end{enumerate}
\end{lemma}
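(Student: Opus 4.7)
The plan is to unfold each of the three descriptions, make all identifications explicit (taking advantage of the convention that in $\Par(\Set)$ we may represent a partial morphism from $Y$ to $Z$ by a pair $(A,\phi)$ with $A\subseteq Y$ and $\phi\colon A\to Z$), and then exhibit the two bijections (i)$\leftrightarrow$(iii) and (ii)$\leftrightarrow$(iii).

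First I would describe (iii) concretely: a map $G\to \Par(X,X)$ is a family $\bigl(A_g,\phi_g\bigr)_{g\in G}$, where, for each $g\in G$, $A_g$ is a subset of $X$ and $\phi_g\colon A_g\to X$ is a function. For the bijection (i)$\leftrightarrow$(iii), I will simply set $A_g := X_{g^{-1}}$ and $\phi_g := \alpha_g$; the inverse assignment sends $(A_g,\phi_g)_{g\in G}$ to the datum $(X_g,\alpha_g)_{g\in G}$ with $X_g := A_{g^{-1}}$ and $\alpha_g := \phi_g$. That this is a mutually inverse pair of assignments is immediate, and no axioms (PA$i$), (LPA$i$) or (QPA$i$) are used --- the correspondence is on the level of bare data, as the statement requires.

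For the bijection (ii)$\leftrightarrow$(iii), I use that in $\Set$ the product decomposes as $G\times X=\bigsqcup_{g\in G}\{g\}\times X$. A partial morphism $G\times X\dashrightarrow X$ is then a subset $B\subseteq G\times X$ together with a map $\psi\colon B\to X$. Writing $A_g := \{x\in X\mid (g,x)\in B\}$, I obtain $B=\bigsqcup_{g\in G}\{g\}\times A_g$ in a unique way, and $\psi$ splits as $\phi_g(x):=\psi(g,x)\colon A_g\to X$. Conversely, given $(A_g,\phi_g)_{g\in G}$ I set $B:=\bigsqcup_{g\in G}\{g\}\times A_g$ and $\psi(g,x):=\phi_g(x)$. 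The two constructions are mutually inverse by inspection.

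I do not anticipate any real obstacle here: the lemma is purely a bookkeeping statement that encodes the three equivalent packagings of the same data. The only thing to get right is the convention on indices (the domain of $\alpha_g$ is $X_{g^{-1}}$, hence the reindexing $A_g=X_{g^{-1}}$ in (i)$\leftrightarrow$(iii)), and the fact that we are working with representatives of partial morphisms of the canonical form (inclusion, map) rather than arbitrary monic spans --- but this is precisely the convention established immediately before the statement of the lemma.
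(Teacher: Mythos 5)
Your proposal is correct and takes essentially the same route as the paper: both proofs are a direct unpacking of the three packagings of the same data, with the same key identification $B=\bigsqcup_{g\in G}\{g\}\times X_{g^{-1}}$ (the paper's $G\bul X$) and the same reindexing convention $A_g=X_{g^{-1}}$. The only cosmetic difference is that you chain the bijections as (i)$\leftrightarrow$(iii) and (ii)$\leftrightarrow$(iii), whereas the paper proves (i)$\leftrightarrow$(ii) and (i)$\leftrightarrow$(iii) directly.
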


\begin{proof}
$\ul{(i)\Leftrightarrow (ii)}$.
Let $(X_g,\alpha_g)_{g\in G}$ be a partial action datum of $G$ on a set $X$. Then we can construct the set
\begin{equation}\eqlabel{GbulX}
G\bul X=\{(g,x)~|~x\in X_{g^{-1}}\}\subset G\times X,
\end{equation}
which is the set of all ``compatible pairs'' in $G\times X$. 
Clearly, the partial action then induces a well-defined map $\alpha:G\bul X\to X, \alpha(g,x)=\alpha_g(x)$. Hence 
we obtain a partial morphism
$\xymatrix{G\times X\ar@{.>}[r] & X}$, 
\[
\xymatrix{
& G\bul X \ar@{_(->}[dl]_{\iota_X} \ar[dr]^\alpha \\
G\times X \ar@{.>}[rr] && X
}
\]
Conversely, consider any partial morphism $\alpha=(G\bul X,\iota,\alpha):G\times X\to X$, where $G\bul X$ is a subset of $G\times X$, $\iota:G\bul X\to G\times X$ is the canonical inclusion and $\alpha:G\bul X\to X$ is a map. Then for any $g\in G$, we can define $X_{g^{-1}}=\{x\in X~|~(g,x)\in G\bul X\}$ and we recover formula \equref{GbulX}.\\
$\ul{(i)\Leftrightarrow(iii)}$.
Let $(X_g,\alpha_g)_{g\in G}$ be a partial action datum, then for any $g\in G$ we have
\[\xymatrix{
& X_{g^{-1}} \ar@{_(->}[dl]_{\iota_g} \ar[dr]^{\alpha_g}\\
X \ar@{.>}[rr] && X
}\]
where $\iota_g:X_g\to X$ is the canonical inclusion, is a partial endomorphism of $X$ which defines a map $G\to \Par(X,X)$. Conversely, any map $G\to \Par(X,X)$ gives in the same way a family $(X_g,\alpha_g)_{g\in G}$, i.e. a partial action datum.
\end{proof}

The natural question that now arises is what are the conditions on a partial morphism $\alpha:G\times X\to X$ for the associated partial action datum to become an actual partial action. A first naive guess would be to impose the usual associativity and unitality conditions of an action expressed in the category $\pa(\Cc)$, or equivalently to impose that the map $G\to \pa(X,X)$ is a morphism of monoids (where the the later is the endomorphism monoid of $X$ in the (1-)category $\pa(\Set)$). However, as we will point out now, this leads to a global action. 

\begin{lemma}\lelabel{partialglobal}
Let $G$ be a group with multiplication $m:G\times G\to G,\ m(g,h)=gh$ and the unit $e:\{*\}\to G,e(*)=e$. Consider a partial action datum $(X_g,\alpha_g)_{g\in G}$. 
\begin{enumerate}
\item the following assertions are equivalent
\begin{enumerate}[(i)]
\item The partial action datum satisfies axiom (PA1);
\item The associated partial morphism $\alpha:G\times X\to X$ satisfies $\alpha\circ (e\times X) \simeq X$ in $\Par(\Set)$.
\item The associated map $\alpha':G\to \Par(X,X)$ preserves the unit.
\end{enumerate}
\item 
The following assertions are equivalent
\begin{enumerate}[(i)]
\item The partial action datum defines a global action of $G$ on $X$;
\item The associated partial morphism $\alpha:G\times X\to X$ satisfies the following identities  in $\pa(\Set)$ (i.e.\ isomorphism of spans)
\begin{eqnarray*}
\alpha\bul (e\times X) &\simeq& X\\
(G\times \alpha)\bul \alpha &\simeq& (m\times G)\bul\alpha
\end{eqnarray*}
\item The associated map $\alpha':G\to \Par(X,X)$ is a morphism of monoids.
\end{enumerate}
\end{enumerate}
\end{lemma}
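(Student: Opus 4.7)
The core technique is to unpack composition of partial morphisms in $\Par(\Set)$ via pullbacks, which in $\Set$ are concretely subsets of Cartesian products, and then to translate the span equalities in (ii) into set-theoretic identities directly comparable with the partial action axioms.

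For part (1), the morphism $e\times X\colon\{*\}\times X\to G\times X$ is total, and pulling back the inclusion $G\bul X\hookrightarrow G\times X$ along it produces the middle object $\{x\in X : (e,x)\in G\bul X\}=X_e$; postcomposing with $\alpha$ gives the partial morphism $(X_e,\alpha_e)\colon\{*\}\times X\to X$. This coincides with the identity span on $X$ precisely when $X_e=X$ and $\alpha_e=id_X$, which is (PA1). The same observation, read through the currying $\alpha'(e)=(X_e,\alpha_e)$, gives the equivalence with (iii), since $\alpha'(e)$ is the unit of the monoid $\Par(X,X)$ under exactly this condition.

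For part (2), the implications $(i)\Rightarrow(ii)\Rightarrow(iii)$ are routine: if the action is global then every $X_g=X$, so every partial morphism involved is total, the stated span equalities reduce to the classical associativity and unit identities of an action, and these in turn encode the monoid-homomorphism property of $\alpha'$. The only nontrivial direction is $(iii)\Rightarrow(i)$. Assume $\alpha'$ is a monoid morphism; by part (1) we already have $X_e=X$ and $\alpha_e=id_X$. Computing the pullback explicitly, the composite $\alpha'(g^{-1})\bul\alpha'(g)$ is the partial endomorphism with middle object $\{x\in X_{g^{-1}}:\alpha_g(x)\in X_g\}=\alpha_g^{-1}(X_g)$, monic leg $\alpha_g^{-1}(X_g)\hookrightarrow X_{g^{-1}}\hookrightarrow X$, and codomain leg $\alpha_{g^{-1}}\circ\alpha_g$. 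The monoid law forces this span to equal $\alpha'(g^{-1}g)=\alpha'(e)=id_X$ in $\pa(\Set)$; since both first legs are monomorphisms, the isomorphism of spans collapses to literal equality of subsets, giving $\alpha_g^{-1}(X_g)=X$ and hence $X_{g^{-1}}=X$ and $\alpha_g(X)\subset X_g$. Varying $g$ yields $X_g=X$ for every $g\in G$, so the action is defined everywhere, and the general relation $\alpha'(gh)=\alpha'(g)\bul\alpha'(h)$ then reads $\alpha_{gh}=\alpha_g\circ\alpha_h$, producing a global action.

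The main obstacle is purely notational bookkeeping: one has to identify the middle object of each pullback correctly and keep straight the direction of the monic leg of each partial morphism. Once this set-theoretic description of composition is in hand, the crucial conceptual input is minimal, namely that inserting the pair $(g,g^{-1})$ into the monoid law is exactly what forces every domain $X_{g^{-1}}$ to coincide with $X$, so the ``partial'' part of the datum automatically collapses to a global action.
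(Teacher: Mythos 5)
Your proof is correct and follows essentially the same route as the paper: both parts rest on the explicit set-theoretic description of the composition pullbacks in $\Par(\Set)$, and the decisive step in your argument and in the paper's is identical, namely evaluating the associativity identity at the pair $(g,g^{-1})$, which forces every domain $X_{g^{-1}}$ to coincide with $X$. One small wrinkle: your justification of $(ii)\Rightarrow(iii)$ is phrased as though globality were already available, so as written the cycle through $(ii)$ is not quite closed; the easy repair is to observe that the span identity $(G\times \alpha)\bul \alpha \simeq (m\times G)\bul\alpha$ restricts over each fixed $(h,g)\in G\times G$ to exactly $\alpha'(h)\bul\alpha'(g)\simeq\alpha'(hg)$, giving $(ii)\Leftrightarrow(iii)$ unconditionally (the paper instead proves $(ii)\Rightarrow(i)$ directly on the big pullback, again via the element $(g,g^{-1},x)$).
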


\begin{proof}
\ul{(1)}. 
Let us compute the composition of spans $\alpha\bul (e\times X)$. This leads to the following pullback
\[
\xymatrix@!C{
&& \{*\}\bul X \ar@{_(->}[dl]_{\iota_X} \ar[dr]^{e\bul X} \ar@{}[d]|<<<{\pullback}\\
& X\cong \{*\}\times X \ar[dl]_\cong \ar[dr]^{e\times X} && G\bul X \ar@{_(->}[dl]_{\iota_X} \ar[dr]^\alpha\\
X \ar@{.>}[rr]&& G\times X \ar@{.>}[rr] && X 
}
\]
where $\{*\}\bul X=\{x\in X~|~x\in X_e\}\cong X_e$. Hence $\alpha\circ (e\times X)$ is the identity morphism on $X$ in the category $\Par(\Set)$, if and only if $X_e=X$ and $\alpha_e=id_X$, which is exactly axiom (PA1). Furthermore, it is clear that this is equivalent to saying that $\alpha'(e)=(X_e,\iota_e,\alpha_e)$ is the span $(X,id_X,id_X)$.\\ 
\ul{(2)}. By part (1), we only have to prove the equivalence of the associativity constraints.
Let us compute the composition of spans $(G\times \alpha)\bul \alpha$ in $\Par(\Set)$, which is given by the following pullback.
\[
\hspace{-1cm}
\xymatrix@!C{
&& G\bul(G\bul X) \ar[dr]^{G\bul \alpha} \ar@{_(->}[dl]  \ar@{}[d]|<<<{\pullback}\\
& G\times (G\bul X) \ar[dr]^{G\times \alpha} \ar@{_(->}[dl]_{G\times \iota} && G\bul X \ar[dr]^\alpha \ar@{_(->}[dl]^\iota  \\
G\times G\times X \ar@{.>}[rr] && G\times X \ar@{.>}[rr] && X
}
\]
Explicitly we find
\[ G\bul(G\bul X)=\{(h,g,x)\in G\times G\times X ~|~ x\in X_{g^{-1}}, gx\in X_{h^{-1}}\}.\]
Similarly, we can compute the composition $(m\times G)\bul\alpha$ in $\Par(\Set)$, which is again given by a pullback
\[
\xymatrix@!C{
&&(G\times G)\bul X\ar@{_(->}[dl]_{\iota'} \ar[dr]^{m\bul X} \ar@{}[d]|<<<{\pullback}\\
&G\times G\times X \ar@{=}[dl] \ar[dr]^{m\times X}&& G \bul X \ar@{_(->}[dl]_{\iota} \ar[dr]^\alpha\\
G\times G\times X \ar[rr] && G\times X \ar@{.>}[rr] && X
}
\]
where now
\begin{linenomath}
$$(G\times G)\bul X=\{(h,g,x)\in G\times G\times X~|~ x\in X_{(hg)^{-1}}\}.$$
\end{linenomath}

We then find that $(g,g^{-1},x)\in G\bul(G\bul X)$ if and only if $x\in X_g$ (and $g^{-1}x\in X_{g^{-1}}$). On the other hand, $(g,g^{-1},x)\in (G\times G)\bul X$ if and only if $x\in X_e=X$. Hence, we obtain that the action is global if and only if $G\bul(G\bul X)$ and $(G\times G)\bul X$ are isomorphic spans. 

In the same way, if the action is global, then clearly $\alpha'$ is a morphism of monoids. Conversely, if $\alpha'$ is a morphism of monoids then we obtain in particular that $\alpha'(g^{-1})\bul \alpha'(g)=\alpha'(e)= (X,id_X,id_X)$. Since the underlying set of the span of $\alpha'(g^{-1})\bul \alpha'(g)$ is given by $\{x\in X_{g^{-1}~|~gx\in X_g}$, we find that $\alpha'(g^{-1})\circ \alpha'(g)=\alpha'(e)$ implies that $X_e=X_g$ for all $g\in G$ and hence we have a global action.
\end{proof}

As we have just observed, partial actions are not just actions in the category of partial morphisms. The monoid morphism $G\to \pa(X,X)$ can also be viewed as a functor between $2$ one-object categories.
However, since the $\Par(\Set)$ is a bicategory, the $\Par(X,X)$ becomes a (monoidal) category, or a one-object bicategory. Consequently, there is a natural laxified version of a partial action considering only a {\em lax} functor between $G$ and $\Par(X,X)$. In the next proposition, we show that this coincides exactly with the lax partial actions we introduced above.

Recall that a lax functor $F:\Bb\to \Bb'$ between 2 bicategories consists of 
\begin{itemize}
\item a map from the $0$-cells of $\Bb$ to the $0$-cells of $\Bb'$, 
\item for any pair of $0$-cells $X,Y$ of $\Bb$, a functor $F_{X,Y}:\Bb(X,Y)\to \Bb'(X',Y')$
\item for any $0$-cell $X$ in $\Bb$ a $2$-cell $u_X:id_{FX}\to F(id_X)$'
\item for any two $1$-cells $a\in \Bb(X,Y)$ and $b\in\Bb(Y,Z)$ a $2$-cell $\alpha_{a,b}:F(a)\bul F(b)\to F(a \bul b)$ (where $\bul$ denotes the horizontal composition), which in natural in $a$ and $b$;
\end{itemize}
satisfying the usual coherence axioms. If the category $\Bb'$ is locally a poset, then these coherence conditions follow automatically from the above information.

\begin{proposition}\prlabel{laxpartial}
Let $G$ be a group with multiplication $m:G\times G\to G,\ m(g,h)=gh$ and the unit $e:\{*\}\to G,e(*)=e$. Consider a partial action datum $(X_g,\alpha_g)_{g\in G}$. 
The following assertions are equivalent
\begin{enumerate}[(i)]
\item The partial action datum defines a lax partial action of $G$ on $X$;
\item For the associated partial morphism $\alpha:G\times X\to X$, there exist morphisms of spans $u:X\to \alpha\bul (e\times X)$ and $\theta:(G\times \alpha)\bul \alpha \to (m\times G)\bul\alpha$;
\item The associated map $\alpha':G\to \Par(X,X)$ is a lax functor where $G$ is considered as a locally discrete $2$-category with one $0$-cell.
\end{enumerate}
\end{proposition}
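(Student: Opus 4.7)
The plan is to leverage the fact that $\Par(\Set)$ is locally a poset (as noted after its definition): any 2-cell between parallel partial morphisms exists uniquely whenever the underlying set of the first is contained in that of the second and the action maps agree on this inclusion, and any coherence axiom for a lax functor into $\Par(\Set)$ is automatically satisfied. Hence the proposition reduces to unwinding the existence of suitable inclusions.

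For $(i)\Leftrightarrow(ii)$ I rely on the pullback computations already carried out in the proof of \leref{partialglobal}. The partial morphism $\alpha\bul(e\times X)$ has underlying set $X_e$, left leg the inclusion $X_e\hookrightarrow X$ and right leg $\alpha_e$, while the identity partial morphism on $X$ has underlying set $X$ with both legs the identity. Thus a 2-cell $u:X\to\alpha\bul(e\times X)$ exists if and only if $X=X_e$ and $\alpha_e=id_X$, i.e.\ axiom (LPA1) holds. For the associativity 2-cell, recall that $(G\times\alpha)\bul\alpha$ has underlying set
\[G\bul(G\bul X)=\{(h,g,x)\mid x\in X_{g^{-1}},\ \alpha_g(x)\in X_{h^{-1}}\}\]
with action $(h,g,x)\mapsto\alpha_h(\alpha_g(x))$, while $(m\times G)\bul\alpha$ has underlying set
\[(G\times G)\bul X=\{(h,g,x)\mid x\in X_{(hg)^{-1}}\}\]
with action $(h,g,x)\mapsto\alpha_{hg}(x)$. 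The 2-cell $\theta$ exists precisely when the former set is contained in the latter (axiom (LPA2)) and the two action maps agree on the former (axiom (LPA3)).

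For $(ii)\Leftrightarrow(iii)$ the key observation is that a partial morphism $G\times X\to X$ decomposes canonically as a $G$-indexed family in $\Par(X,X)$: the fibre of $G\bul X\subseteq G\times X$ over $g\in G$ is $X_{g^{-1}}$, yielding $\alpha'(g)=(X_{g^{-1}},\alpha_g)$. The analogous decomposition applies to partial morphisms $G\times G\times X\to X$, giving $(G\times G)$-indexed families in $\Par(X,X)$, and a 2-cell in $\Par(G\times G\times X,X)$ is the same datum as a $(G\times G)$-indexed family of 2-cells in $\Par(X,X)$. Applied to (ii), the fibre of $(G\times\alpha)\bul\alpha$ over $(h,g)$ is exactly $\alpha'(h)\bul\alpha'(g)$, and the fibre of $(m\times G)\bul\alpha$ over $(h,g)$ is exactly $\alpha'(hg)$; hence $\theta$ corresponds bijectively to a family of 2-cells $\alpha'(h)\bul\alpha'(g)\to\alpha'(hg)$ in $\Par(X,X)$, while $u$ corresponds to a 2-cell $id_X\to\alpha'(e)$. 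These are precisely the structural data of a lax functor $\alpha':G\to\Par(X,X)$ (viewing $G$ as a locally discrete bicategory with one $0$-cell), with naturality in the $1$-cells $g,h$ automatic by local discreteness of $G$ and the coherence axioms automatic by local posetality of $\Par(\Set)$.

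The main subtlety is the fibrewise decomposition of partial morphisms whose source is a cartesian product with $G$; once this is made explicit, the remainder of the argument is a routine translation between three equivalent formulations of the same inclusion-of-subsets data.
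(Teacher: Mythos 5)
Your proof is correct and follows essentially the same route as the paper: both parts rest on the pullback computations of \leref{partialglobal} and on the identification of $\alpha'(h)\bul\alpha'(g)$ with the span on $X_{g^{-1}}\cap\alpha_g^{-1}(X_{h^{-1}})$, with the local posetality of $\Par(\Set)$ rendering all coherence automatic. The only organizational difference is that you connect (ii) and (iii) directly via a fibrewise decomposition over $G\times G$, whereas the paper routes (iii) through (i); the underlying computations are identical.
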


\begin{proof}
$\ul{(i)\Leftrightarrow(ii)}.$ As we have shown in \leref{partialglobal}, $\alpha\bul (e\times X)$ is given by the span $(X_e,\iota_e,\alpha_e):X\to X$. There existence of a morphism of spans $u:X\to \alpha\bul (e\times X)$, means that $X\subset X_e\subset X$, hence $X=X_e$, and $\alpha_e=id_X$.

Furthermore, we also know from \leref{partialglobal} the explicit form of $(G\times \alpha)\bul \alpha$ and $(m\times G)\bul\alpha$. The existence of the morphism $\theta$ then means that $G\bul (G\bul X)\subset (G\times G)\bul X$, which is exactly axiom (LPA2) and the restriction of the partial action $\alpha_{gh}$ to $X_{g^{-1}}\cap \alpha^{-1}_g(X_{h^{-1}})$ coincides with $\alpha_h\circ \alpha_g$, which is exactly axiom (LPA3).\\
$\ul{(i)\Rightarrow(iii)}.$ Recall that the map $\alpha':G\to \Par(X,X)$ is given by $\alpha'(g)=(X_{g^{-1}},\iota_g,\alpha_g)$. Both $G$ and $\Par(X,X)$ are considered as one-object bicategories and moreover $G$ has only trivial $2$-cells, $\Par(X,X)$ is a poset. 
Hence, $\alpha':G\to \Par(X,X)$ induces a lax functor if and only if there exists morphism of spans $u':(X,id_X,id_X)\to (X_e,\iota_e,\alpha_e)$ and $\theta':\alpha'(h)\bul \alpha'(g)\to \alpha'(hg)$. 
As in the first part of the proof, the existence $u'$ is equivalent axiom (LPA1). Furthermore, remark that
$\alpha'(h)\bul \alpha'(g)$ is given by the span
\[
\hspace{-1cm}
\xymatrix@!C{
&& X_{g^{-1}}\cap \alpha_g^{-1}(X_{h^{-1}}) \ar@{_(->}[dl] \ar[dr]^{\alpha_g}\\
& X_{g^{-1}} \ar@{_(->}[dl]_{\iota_g} \ar[dr]^{\alpha_g} &&  X_{h^{-1}} \ar@{_(->}[dl]_{\iota_h} \ar[dr]^{\alpha_h}\\
X && X && X
}
\]
Hence the existence of $\theta'$ means that axioms (PLA2) and (PLA3) hold.
\end{proof}

As we have pointed out before, partial actions are a special instance of lax partial actions. In the next result we provide equivalent conditions for a lax partial action to be partial.

Let us first make the following observation. Given a partial action datum, we can consider the pullback
\[
\xymatrix@!C{
& (G\bul G)\bul X \ar@{_(->}[dl] \ar@{^(->}[dr]\\
G\times (G\bul X) \ar@{^(->}[dr] && (G\times G)\bul X \ar@{_(->}[dl]\\
& G\times G\times X
}
\]
which is nothing else than the intersection $G\times (G\bul X) \cap (G\times G)\bul X$. If the partial action datum defines a lax partial action, then existence of the morphism of spans $\theta:(G\times \alpha)\bul \alpha \to (m\times G)\bul\alpha$ implies that the following diagram commutes
\[
\xymatrix@!C{
&& G\bul (G\bul X) \ar@{_(->}[dl] \ar[dr]^{G\bul \alpha} \ar[dddd]^{\theta}\\
& G\times(G\bul X)  \ar@{_(->}[dl] && G\bul X \ar[dr]^\alpha\\
G\times G\times X &&&& X\\
&&& G\bul X \ar[ur]_{\alpha}\\
&& (G\times G)\bul X \ar@{_(->}[uull] \ar[ur]_{m\bul X}
}
\]
and therefore the image of $\theta$ lies in $(G\bul G)\bul X$, i.e. we can corestrict $\theta$ to a morphism
$\bar\theta: G\bul (G\bul X)\to (G\bul G)\bul X$.

\begin{proposition}\prlabel{laxvsgeom}
Let $\alpha$ be a lax partial action of the group $G$ on the set $X$. Then the following statements are equivalent
\begin{enumerate}[(i)]
\item $\alpha$ is a partial action;
\item $\bar\theta:G\bul (G\bul X)\to (G\bul G)\bul X$ is an isomorphism;
\item for each $g\in G$, we have that $\alpha_g:X_{g^{-1}}\to X_g$.
\end{enumerate}
\end{proposition}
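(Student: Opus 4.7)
I would establish the cycle $(i)\Rightarrow (iii)\Rightarrow (ii)\Rightarrow (i)$. The first implication is immediate: specializing (PA2) to $h=e$ (legitimate by (PA1)) gives at once $\alpha_g(X_{g^{-1}})=\alpha_g(X_{g^{-1}}\cap X_e)\subset X_g\cap X_g=X_g$.

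For $(iii)\Rightarrow (ii)$, I would first note that $\bar\theta$ is by construction the factorization through $(G\bul G)\bul X=G\times(G\bul X)\cap (G\times G)\bul X$ of the set inclusion $G\bul(G\bul X)\hookrightarrow (G\times G)\bul X$ furnished by (LPA2). Hence $\bar\theta$ is an isomorphism exactly when this inclusion is an equality, which unpacks to the statement: whenever $x\in X_{g^{-1}}\cap X_{(hg)^{-1}}$, then $\alpha_g(x)\in X_{h^{-1}}$. The key trick is to invoke the lax axioms not with the natural pair $(h,g)$ but with \emph{shifted} index pairs. First, (LPA3) applied to $(g^{-1},g)$---whose domain $X_{g^{-1}}\cap\alpha_g^{-1}(X_g)$ coincides with $X_{g^{-1}}$ thanks to (iii)---yields $\alpha_{g^{-1}}(\alpha_g(x))=\alpha_e(x)=x\in X_{(hg)^{-1}}$, so $\alpha_g(x)$ lies in $\alpha_{g^{-1}}^{-1}(X_{(hg)^{-1}})$. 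Second, (iii) already guarantees $\alpha_g(x)\in X_g$. Finally, (LPA2) applied to $(hg,g^{-1})$ reads
\begin{linenomath}
\[X_g\cap\alpha_{g^{-1}}^{-1}(X_{(hg)^{-1}})\subset X_{(hg\cdot g^{-1})^{-1}}=X_{h^{-1}},\]
\end{linenomath}
and the two memberships just established place $\alpha_g(x)$ in this set.

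For $(ii)\Rightarrow (i)$: axiom (PA1) is literally (LPA1); axiom (PA3) follows from (LPA3) together with the equality of domains $X_{g^{-1}}\cap\alpha_g^{-1}(X_{h^{-1}})=X_{g^{-1}}\cap X_{(hg)^{-1}}$ supplied by (ii). For (PA2), given $x\in X_{g^{-1}}\cap X_h$, I would apply the biconditional (ii) twice: with $h$ replaced by $(gh)^{-1}$ (so the right-hand condition becomes $x\in X_h$, which holds) to deduce $\alpha_g(x)\in X_{gh}$, and with $h$ replaced by $g^{-1}$ (so the right-hand condition is trivially $x\in X$) to deduce $\alpha_g(x)\in X_g$; combined, $\alpha_g(x)\in X_g\cap X_{gh}$.

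The main obstacle is the step $(iii)\Rightarrow(ii)$, where the lax axioms must be invoked with carefully chosen index shifts---first $(g^{-1},g)$ to ``undo'' the action of $g$ via (iii), and then $(hg,g^{-1})$ to extract the missing membership in $X_{h^{-1}}$. The remaining implications amount to reorganizing the domain formulae in the (L)PA axioms.
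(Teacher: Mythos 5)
Your proof is correct and follows essentially the same route as the paper's: both unpack (ii) as the set-theoretic equality $G\bul (G\bul X)=(G\bul G)\bul X$, and both obtain the hard inclusion from (iii) by exploiting that $\alpha_{g^{-1}}\circ\alpha_g=\mathrm{id}$ on $X_{g^{-1}}$ (a consequence of (LPA1), (LPA3) and (iii)) together with (LPA2) applied to shifted index pairs. The only difference is bookkeeping: you close the cycle as $(i)\Rightarrow(iii)\Rightarrow(ii)\Rightarrow(i)$, whereas the paper proves $(ii)\Leftrightarrow(i)\Rightarrow(iii)$ and then $(iii)\Rightarrow(i)$, so the index substitutions are named differently but the content is identical.
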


\begin{proof}
$\ul{(ii)\Leftrightarrow(i)\Rightarrow(iii)}$.
By definition, partial actions and lax partial actions only differ in their second axiom. 
From the above discussion, we know that 
\begin{eqnarray*}
(G\bul G)\bul X&=&G\times (G\bul X) \cap (G\times G)\bul X\\
&=& \{(h,g,x)\in G\times G\times X~|~x\in X_{g^{-1}} \cap X_{(gh)^{-1}} \}
\end{eqnarray*}
Therefore, $\bar\theta$ is an isomorphism we obtain that if
\[(h^{-1}g^{-1},g,x)\in (G\bul G)\bul X,\ \text{i.e.} \quad x\in X_{g^{-1}}\cap X_{h}\]
then also
\[((gh)^{-1},g,x)\in G\bul (G\bul X),\ \text{i.e.}\quad
x\in X_{g^{-1}}\ \text{and}\ gx\in X_{hg}\]
Hence we find that $\alpha_g(X_{g^{-1}}\cap X_h)\subset X_{gh}$. 
In particular, taking $h=e$, then we find that  
$\alpha_g(X_{g^{-1}})\subset X_g$. Combining both, we recover exactly axiom (PA2).

$\ul{(iii)\Rightarrow(i)}$. For any $g\in G$ and $x\in X_{g}$ we find that $g^{-1}x\in X_{g^{-1}}$, we can apply $g$ on $g^{-1}x$ and find that $x=g\cdot g^{-1}x$. So $x\in X_g$ if and only if $x=gy$ for some $y\in X_{g^{-1}}$. 

Now take any $x\in X_{g^{-1}}\cap X_h$. Then by the above, we can write $x=hy$ for some $y\in X_{h^{-1}}$. Since we have that $y\in X_{h^{-1}}$ and $x=hy\in X_{g^{-1}}$, it follows by axiom (LPA2) that $y\in X_{(gh)^{-1}}$ and $gh\cdot y=g\cdot (hy)=gx$. In particular, we find that $gx=gh\cdot y\in X_{gh}$. Hence we obtain exactly axiom (PA2).
\end{proof}

Finally, we also restate the definition of quasi partial action in terms of spans, the proof of which is clear. 

\begin{proposition}
A partial action datum $(X_g,\alpha_g)$ defines a quasi partial action of $G$ on $X$ if and only if 
the equivalent statements of \leref{partialglobal} (1) hold and the associativity constraint $\alpha_h\circ\alpha_g=\alpha_{hg}$ holds on all elements of the following pullback
\[
\xymatrix@!C{
& \Theta \ar[dl]_{\theta_1} \ar[dr]^{\theta_2}  \\
G\bul (G\bul X) \ar@{^(->}[dr] && (G\bul G)\bul X \ar@{_(->}[dl]\\
& G\times G\times X
}
\]
Consequently, a quasi partial action is lax if and only if the span $(\Theta,\theta_1,\theta_2):G\bul (G\bul X)\to (G\bul G)\bul X$ is induced by a morphism, and the quasi partial action is a partial action if and only if $\Theta$ is an isomorphism.
\end{proposition}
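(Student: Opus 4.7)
The plan is to unpack the pullback $\Theta$ explicitly at the level of elements and then match each of the three assertions with the corresponding axiom, leveraging the computations of $G\bul(G\bul X)$ and $(G\bul G)\bul X$ already carried out in \leref{partialglobal} and in the discussion preceding \prref{laxvsgeom}.

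First, I would write $\Theta$ as the set-theoretic intersection
\begin{linenomath}
\[
\Theta=\{(h,g,x)\in G\times G\times X~|~x\in X_{g^{-1}}\cap\alpha_g^{-1}(X_{h^{-1}})\cap X_{(hg)^{-1}}\},
\]
\end{linenomath}
using the explicit descriptions $G\bul(G\bul X)=\{(h,g,x)~|~x\in X_{g^{-1}},\ \alpha_g(x)\in X_{h^{-1}}\}$ and $(G\bul G)\bul X=\{(h,g,x)~|~x\in X_{g^{-1}}\cap X_{(hg)^{-1}}\}$. At the fixed pair $(h,g)$ this set is exactly the domain on which (QPA2) requires $\alpha_h\circ\alpha_g=\alpha_{hg}$. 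Combined with \leref{partialglobal}(1), which rephrases (QPA1) in span-theoretic terms, this yields the first assertion.

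For the equivalence ``quasi is lax'', I observe that $\theta_1:\Theta\hookrightarrow G\bul(G\bul X)$ is a monomorphism, being the pullback of the monomorphism $(G\bul G)\bul X\hookrightarrow G\times G\times X$. Hence the span $(\Theta,\theta_1,\theta_2)$ comes from an (honest) morphism in $\Par(\Set)$ if and only if $\theta_1$ is bijective, i.e., $G\bul(G\bul X)\subseteq(G\bul G)\bul X$. Unpacking, this inclusion says that $x\in X_{g^{-1}}$ and $\alpha_g(x)\in X_{h^{-1}}$ force $x\in X_{(hg)^{-1}}$, which is precisely (LPA2); under this inclusion the domain in (QPA2) reduces to $X_{g^{-1}}\cap\alpha_g^{-1}(X_{h^{-1}})$, so (QPA2) becomes (LPA3).

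For the equivalence ``quasi is partial'', a span in $\span(\Set)$ is an isomorphism if and only if both legs are bijections. Bijectivity of $\theta_1$ upgrades the quasi partial action to a lax one by the previous paragraph, and then additional bijectivity of $\theta_2$ makes the comparison map $\bar\theta:G\bul(G\bul X)\to(G\bul G)\bul X$ of \prref{laxvsgeom} an isomorphism, which by that proposition is equivalent to the lax partial action being a genuine partial action. The only obstacle is notational bookkeeping: tracking which inclusion corresponds to which axiom and which leg of $\Theta$ controls which implication. Once the explicit description of $\Theta$ is in hand, all three parts reduce to the set-theoretic identifications already carried out in \leref{partialglobal} and \prref{laxvsgeom}.
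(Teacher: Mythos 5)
Your proof is correct and follows essentially the same route as the paper, which treats the statement as clear from the element-wise descriptions of $G\bul(G\bul X)$ and $(G\bul G)\bul X$ in \leref{partialglobal} and the reduction to \prref{laxpartial} and \prref{laxvsgeom}; you simply spell out the identification of $\Theta$ with the domain of (QPA2) and the leg-by-leg analysis in more detail than the paper's one-line remark. (The only discrepancy, the index $(gh)^{-1}$ versus $(hg)^{-1}$, traces back to a typo in the paper's axiom (QPA2) rather than to your argument.)
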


\begin{proof}
Let us just remark that the associativity on $\Theta$ means that the following diagram commutes
\[\xymatrix@!C{
& \Theta \ar[dl]_{\theta_1} \ar[dr]^{\theta_2}  \\
G\bul (G\bul X)  \ar[d]_{G\bul \alpha} && (G\bul G)\bul X  \ar[d]^{m\bul X}\\
G\bul X \ar[d]_{\alpha} && G\bul X \ar[d]_{\alpha} \\
X \ar@{=}[rr] && X
}\]
\end{proof}

\section{Partial comodules over a coalgebra}\selabel{coalgebra}

Let $\Cc$ be a braided monoidal category with pullbacks that are preserved by all endofunctors on $\Cc$ of the form $-\ot X$ and $X\ot-$. Then the observations from the previous section allow us to define partial actions of a Hopf algebra in $\Cc$ on any object in $\Cc$ such that taking $\Cc=\Set$ we recover the classical definition of partial actions of groups on sets. Since we will rather be interested in examples inspired by algebraic geometry, hence in coactions rather than actions, we will take a dual point of view and consider from now on a braided monoidal category $\Cc$ with pushouts that are preserved by all endofunctors of the form $-\ot X$ and $X\ot -$, and Hopf algebras mentioned below are Hopf algebras in $\Cc$. Remark that in such a category, the tensor product of two epimorphisms is an epimorphism. 
Since pushouts are colimits, any braided closed monoidal category will serve as an example, in particular any category of modules over a commutative ring $k$. In what follows the latter will be our standard example, were we in fact mostly will restrict to the case where $k$ is a field.
Inspired by this example we will denote the unit of the monoidal category $\Cc$ by $k$.

\subsection{Geometrically partial comodules}

In \cite{ABV}, the notion of a ``partial module'' over a Hopf algebra $H$ was introduced, by means of partial representations of Hopf algebras and similarly, ``partial comodules'' can be introduced by means to partial corepresentations, see \cite{coreps}. In this section, we introduce alternative notions of partial (co)module over any (co)algebra. To prevent a clash of terminologies in case $C=H$, we will call our notions (in rising order of generality) a {\em quasi}, {\em lax} and {\em geometric} partial (co)modules. 
We show that in the case of Hopf algebras, the partial modules of \cite{ABV}, and in particular, the partial actions of \cite{CJ}, appear as special cases of our quasi partial comodules. Examples arising from (usual) partial actions of (algebraic) groups on (algebraic) sets give rise to geometric partial comodules.

\begin{definition}
Let $(H,\Delta,\epsilon)$ be a coalgebra in a monoidal category $\Cc$. 
A {\em partial comodule datum} is a quadruple $\ul X=(X,X\bul H,\pi_X,\rho_X)$, where $X$ and $X\bul H$ are objects in $\Cc$, $\pi_X:X\ot H\tto X\bul H$ is an epimorphism and $\rho_X:X\to X\bul H$ is a morphism in $\Cc$.
\end{definition}

Remark that a partial comodule datum can be viewed as the following cospan in $\Cc$ 
\[
\xymatrix@!C{
X \ar@{.>}[rr] \ar[dr]^{\rho_X} && X\ot H  \ar@{->>}[dl]_{\pi_X} \\
& X\bul H
}\]
Suppose now that the category $\Cc$ has pushouts. Then to any partial comodule datum induces canonically four pushouts, that we denote by $X\bul k$, $(X\bul H)\bul H$, $X\bul(H\ot H)$ and $(X\bul H)\bul H$, and that are defined respectively by the following diagrams:
\[
\xymatrix@!C{
& X\ot H  \ar@{->>}[dl]_{\pi_X}\ar[dr]^{X\ot\epsilon}  \\
 X\bul H \ar[dr]_{X\bul \epsilon} && X\ot k \ar@{->>}[dl]^{\pi_{X,\epsilon}} \\
& X\bul k  \ar@{}[u]|<<<{\pushout}
}\quad
\xymatrix@!C{
& X\ot H \ar@{->>}[dl]_-{\pi_X} \ar[dr]^-{\rho_X\ot H}\\
X\bul H 
\ar[dr]_{\rho_X\bul H}  && (X\bul H)\ot H \ar@{->>}[dl]^{\pi_{X\bul H}}\\
&(X\bul H)\bul H\ar@{}[u]|<<<{\pushout}
}
\]
\[
\xymatrix@!C{
& X\ot H \ar@{->>}[dl]_-{\pi_X} \ar[dr]^-{X\ot \Delta}\\
X\bul H \ar[dr]_{X\bul \Delta} && X\ot H\ot H\ar@{->>}[dl]^{\pi_{X,\Delta}} \ar@{->>}[dr]^{\pi_X\ot H}\\
& X\bul(H\ot H) \ar[dr]_{\pi'_{X}} \ar@{}[u]|<<<{\pushout}  && (X\bul H)\ot H \ar@{->>}[dl]^{\pi'_{X,\Delta}} \\
&&X\bul(H\bul H) \ar@{}[u]|<<<{\pushout}
}
\]
Finally, we consider a last pushout that we denote as $\Theta$ and that is given by the following diagram
\[
\xymatrix@!C{
& (X\bul H)\ot H \ar@{->>}[dl]_-{\pi_{X\bul H}} \ar@{->>}[dr]^-{\pi'_{X,\Delta}}\\
(X\bul H)\bul H \ar@{->>}[dr]_-{\theta_1} && 
X\bul (H\bul H) \ar@{->>}[dl]^{\theta_2}\\
&\Theta \ar@{}[u]|<<<{\pushout}
}
\]
We will call $\Theta$ the {\em coassociativity pushout}.

We are now ready to state the exact definitions of a partial comodule.

\begin{definition}
Let $(H,\Delta,\epsilon)$ be a coalgebra in a monoidal category with pushouts $\Cc$. A {\em quasi partial comodule} is a 
partial comodule datum $(X,X\bul H,\pi_X,\rho_X)$
that satisfies the following conditions
\begin{enumerate}[{[QPC1]}]
\item $(X\bul\epsilon)\circ \rho_X=\pi_{X,\epsilon}\circ r_X:X\to X\bul k$ are identical isomorphisms. I.e.\ the following diagram commutes
\[
\xymatrix@!C{
X \ar@{.>}[rr] \ar@/_/[dddrr]_{id_X} \ar[dr]^{\rho_X} && X\ot H  \ar@{->>}[dl]_{\pi_X}\ar[dr]^{X\ot\epsilon} \ar[rr] && X \ar[dl]^{r_X}_\cong \ar@/^/[dddll]^{id_X} \\
& X\bul H \ar[dr]_{X\bul\epsilon} && X\ot k \ar[dl]^{\pi_{X,\epsilon}} \\
&& X\bul k \ar[d]^{\cong} \ar@{}[u]|<<<{\pushout}\\
&& X
}
\]
\item $\theta_1\circ(\rho_X\bul H)\circ\rho_X=\theta_2\circ \pi'_X\circ (X\bul\Delta)\circ \rho_X$, i.e. the following diagram commutes
\[
\xymatrix{
&X\bul H \ar[rr]^-{\rho_X\bul H} && (X\bul H)\bul H \ar[dr]^-{\theta_1}\\
X \ar[ur]^-{\rho_X} \ar[dr]_-{\rho_X} &&&& \Theta\\
&X\bul H \ar[r]_-{X\bul \Delta} & X\bul(H\ot H) \ar[r]_-{\pi'_X} & X\bul(H\bul H) \ar[ur]_-{\theta_2}
}
\]
\end{enumerate}
A quasi partial comodule will be called a {\em lax partial comodule} when the cospan $\Theta_X:X\bul(H\bul H)\dashrightarrow (X\bul H)\bul H$ is induced by a morphism $\theta$ (in $\Cc$). Furthermore a lax partial comodule is called a {\em geometric partial comodule} if $\theta$ is an isomorphism.
\end{definition}

\begin{remarks}\relabel{ParSweedler}
\begin{enumerate}
\item As we have remarked before, partial morphisms and hence the structure of partial comodules, have a typical bicategorical behaviour. For example, both the unitality and coassociativity axiom of a (geometric) partial comodule consist of 2 parts. Firstly, it concerns an (iso)morphism of certain pushouts (to be precise, $X\bul k\cong X$ for unitality and $(X\bul H)\bul H \cong X\bul (H\bul H)$ for coassociativity of geometric partial comodules), and then an identity between certain maps, involving and uniquely characterizing the previous isomorphisms. An useful consequence of the unitality axiom is therefore that $(X\bul \epsilon)\circ \pi_X \simeq X\ot \epsilon$, where the symbol ``$\simeq$'' means that both maps are identical up to the (canonical) isomorphism $X\bul k\cong X$.
\item Remark that by uniqueness of colimits, the pushout $\Theta_X=(\Theta,\theta_1,\theta_2)$ is unique up to isomorphism and hence is not part of the structure of a quasi partial comodule. Similarly, if $\Theta_X$ is induced by a morphism $\theta_X$, then this morphism is uniquely determined by its property 
$\theta_X\circ \pi_{X\bul H} = \pi'_{X,\Delta}$ since $\pi_{X\bul H}$ is an epimorphism. Also, whenever there exists a morphism $\theta_X$ with this property, then $\Theta\cong (X\bul H)\bul H$.
\item We will often denote a  (quasi) partial comodule by $(X,\pi_X,\rho_X)$ or just by $X$. 
\item Of course, one can make state dual definitions of a {\em quasi}, {\em lax} and {\em geometric partial module} over an algebra. We leave the details to the reader, it suffices to apply the above definition to the opposite category $\Cc^{op}$. 
\item When working in the base category $\Cc=\Mm_k$ We will sometimes use the following Sweedler notation for quasi partial comodules. For any $x\ot h\in X\ot H$, we write $\pi_X(x\ot h)=x\bul h\in X\bul H$. Remark that $X\bul H$ is no longer a tensor product (see below for an interpretation of $X\bul H$ as a monoidal product when $H$ is a bialgebra). Hence $x\bul h$ represents a certain class of tensors in $X\ot H$ and by the surjectivity of $\pi_X$, any element of $X\bul H$ can be represented in such a way, although non-uniquely. We then write $\rho_X(x)=x_{[0]}\bul x_{[1]}$, which means that there exists an element $x_{[0]}\ot x_{[1]}\in X\ot H$ such that $\rho_X(x)=\pi_X(x_{[0]}\ot x_{[1]})$. Again, the element $x_{[0]}\ot x_{[1]}\in X\ot H$ is not unique, so some care is needed in this notation. However, the class $x_{[0]}\bul x_{[1]}\in X\bul H$ is well-defined since $\rho_X$ is a proper map. Axiom [QPC1] tells us then that, as for usual coactions, $x_{[0]}\epsilon(x_{[1]})=x$ for all $x\in X$, and in particular this expression makes sense. We will treat axiom [QPC2] in a similar way by the expression
\[x_{[0][0]}\bul x_{[0][1]}\bul x_{[1]} = x_{[0]}\bul x_{[1](1)}\bul x_{[1](2)}\]
However, this expression now holds in the pushout $\Theta$, and by definition, 
the left hand side in the above expression is the notation for $\theta_1\circ(\rho_X\bul H)\circ\rho_X(x)$ and the right hand side is $\theta_2\circ \pi'_X\circ (X\bul\Delta)\circ \rho_X(x)$, for the same $x\in X$. 
\end{enumerate}
\end{remarks}

A first class of examples is obtained from the results of the previous section by taking $\Cc=\Set^{op}$. Indeed, quasi, lax and (usual) partial actions of a group coincide in this way with quasi, lax and geometric partial (co)modules. Remark that in the above formation, these notions also allow to consider partial actions of arbitrary monoids rather than groups.

Before we give some more examples, let us first state the following (well-known) lemma that will be useful for our purposes.

\begin{lemma}\lelabel{pushoutvect}
Consider vector spaces $U$, $V$, $W$ and linear maps $f:U\to V$, $g:U\to W$, where $g$ is surjective. Then the pushout of the pair $(f,g)$ is given by $P=V/f(\ker g)$, where $\ol g:V\to P$ is the canonical surjection and $\ol f:W\to P$ is given by $\ol f(w)=f(u)+f(\ker g)$, where $u$ is any element of $U$ such that $g(u)=w\in W$. 
\end{lemma}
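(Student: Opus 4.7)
The plan is to verify three things in sequence: that $\ol f$ is well-defined as described, that the square with $\ol g\circ f=\ol f\circ g$ commutes, and that the resulting cocone satisfies the universal property. None of these steps should present a real obstacle; the content is essentially bookkeeping with the defining quotient $P=V/f(\ker g)$ and the surjectivity of $g$.

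First, well-definedness of $\ol f$. Given $w\in W$, surjectivity of $g$ supplies some $u\in U$ with $g(u)=w$, so the formula $\ol f(w)=f(u)+f(\ker g)$ has a candidate value. For independence of the choice of $u$, if $g(u)=g(u')=w$ then $u-u'\in \ker g$, hence $f(u)-f(u')\in f(\ker g)$, so the two representatives give the same class in $P$. Linearity of $\ol f$ follows from linearity of $f$ together with $k$-linearity of the set $f(\ker g)$. Commutativity of the square is immediate: for $u\in U$, taking $u$ itself as a preimage of $g(u)$ gives $\ol f(g(u))=f(u)+f(\ker g)=\ol g(f(u))$.

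Next, the universal property. Suppose $\alpha:V\to Q$ and $\beta:W\to Q$ are linear maps with $\alpha\circ f=\beta\circ g$. Define $\gamma:P\to Q$ by $\gamma(v+f(\ker g))=\alpha(v)$. This is well-defined because for $u\in\ker g$ we have $\alpha(f(u))=\beta(g(u))=\beta(0)=0$, so $\alpha$ vanishes on $f(\ker g)$ and therefore descends to $P$. By construction $\gamma\circ\ol g=\alpha$. For $\gamma\circ\ol f=\beta$, pick $u$ with $g(u)=w$ and compute
\begin{linenomath}
\[
\gamma(\ol f(w))=\gamma(f(u)+f(\ker g))=\alpha(f(u))=\beta(g(u))=\beta(w).
\]
\end{linenomath}
Uniqueness of $\gamma$ is forced by $\ol g$ being surjective (since $\ol g$ is the canonical projection), which determines $\gamma$ on all of $P$ from its prescribed behaviour on the image of $\ol g$.

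The only subtlety worth flagging is a minor one: the formula for $\ol f$ uses an unspecified preimage $u$ under $g$, and this is the single place where surjectivity of $g$ is genuinely used (both to produce $u$ and to conclude that $\ol f$ is defined on all of $W$). Everything else is formal. Hence the triple $(P,\ol g,\ol f)$ realizes the pushout of $(f,g)$.
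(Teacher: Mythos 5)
Your proof is correct: the well-definedness of $\ol f$, the commutativity of the square, and the verification of the universal property (including uniqueness via surjectivity of $\ol g$) are all carried out properly, and surjectivity of $g$ is used exactly where it must be. The paper itself states this lemma as well-known and gives no proof, so there is nothing to compare against; your argument is the standard direct verification one would expect.
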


Let us now provide some examples.

\begin{example}[Quotient of a global comodule] \exlabel{quotient}
Consider a global $H$-comodule $X$ with coaction $\rho:X\to X\ot H$ and any epimorphism $\pi:X\to Y$ in $\Cc$. Then we can define a partial comodule datum over $Y$ by taking the pushout of the pair $(\pi, (\pi\ot H)\circ \rho)$
\begin{equation}\eqlabel{quotient}
\xymatrix@!C{
&& X\ar[ddll]_{\pi} \ar[dr]^{\rho}\\
&&& X\ot H \ar[dr]^-{\pi\ot id}\\
Y \ar[drr]_{\rho_Y} &&&& Y\ot H \ar[dll]^{\pi_Y}\\
&& Y\bul H 
\ar@{}[u]|<<<{\pushout}
}
\end{equation}
Consider the following diagram.\\
\resizebox{\textwidth}{!}{
$$\xymatrix@!C{
X\ar@{->>}[dd]_\pi
\ar[rr]^\rho&&X\ot H\ar@{->>}[dd]^{\pi\ot H}
\ar[rr]^{\rho \ot H}&&X\ot H\ot H\ar@{->>}[dd]^{\pi\ot H\ot H}&&X\ot H\ar@{->>}[dd]^{\pi\ot H}\ar[ll]_{X\ot\Delta}&&X\ar[ll]_{\rho}
\ar@{->>}[dd]_{\pi}\\
\\
Y\ar@{.>}[rr]\ar[dr]&&Y\ot H\ar@{.>}[rr]\ar@{->>}[dl]\ar[dr]^{\rho_Y\ot H}&&Y\ot H\ot H\ar@{->>}[dl]^{\pi_Y\ot H}\ar@{=}[dr]&&Y\ot H\ar[ll]\ar[dl]\ar@{->>}[dr]&&Y\ar@{.>}[ll]\ar[dl]\\
&Y\bul H\ar@{}[u]|<<<{\pushout}\ar[dr]&&(Y\bul H)\ot H\ar@{}[u]|<<<{\pushout}\ar@{->>}[dl]\ar@{=}[dr]&&Y\ot H\ot H\ar@{}[u]|<<<{\pushout}\ar@{->>}[dl]\ar@{->>}[dr]&&Y\bul H\ar@{}[u]|<<<{\pushout}\ar[dl]&\\
&&(Y\bul H)\bul H\ar@{}[u]|<<<{\pushout}\ar@{=}[dr]&&(Y\bul H)\ot H\ar@{->>}[dl]\ar@{->>}[dr]\ar@{}[u]|<<<{\pushout}&&Y\bul (H\ot H)\ar@{->>}[dl]\ar@{}[u]|<<<{\pushout}&&\\
&&&(Y\bul H)\bul H \ar@{}[u]|<<<{\pushout} &&Y\bul (H\bul H)\ar@{}[u]|<<<{\pushout}&&&} 
$$
}
By composing pushouts in the diagram, we see that $(Y\bul H)\bul H$ is the pushout of the pair $(\pi,(\rho_Y\ot H)\circ (\pi\ot H)\circ \rho)$. Moreover, diagram chasing and the coassociativity of $(X,\rho)$ tells us that
\begin{eqnarray*}
(\rho_Y\ot H)\circ (\pi\ot H)\circ \rho &=& (\pi_Y\ot H)\circ (\pi\ot H\ot H)\circ (\rho\ot H)\circ \rho\\
&=& (\pi_Y\ot H)\circ (\pi\ot H\ot H)\circ (X\ot \Delta)\circ \rho\\
&=& (\pi_Y\ot H)\circ (Y\ot \Delta)\circ (\pi\ot H)\circ \rho
\end{eqnarray*}
And hence $(Y\bul H)\bul H$ is has to be isomorphic to $Y\bul(H\bul H)$, which is exactly the pushout of 
$(\pi,(\pi_Y\ot H)\circ (Y\ot \Delta)\circ (\pi\ot H)\circ \rho)$. 
We can conclude that $(Y,\rho_Y,\pi_Y)$ is a geometric partial comodule. 

Performing this construction in $\Cc=\Set^{op}$, we recover \exref{paract} (1)
\end{example}

\begin{example}[Quotient of a partial comodule]\exlabel{quotientpartial}
The previous example can be generalized in the following way. Let $(X,X\bul H,\pi_X,\rho_X)$ be a partial $H$-comodule datum, and $p:X\to Y$ an epimorphism. Then consider the pushout $P$ of the pair $(\pi_X,p\ot H)$:
\[
\xymatrix{
& X\ot H \ar[dl]_{\pi_X} \ar[dr]^{p\ot H}\\
X\bul H \ar[dr]_{p_1} && Y\ot H \ar[dl]^{p_2}\\
& P
}
\]
Moreover, we can then define a partial comodule datum $(Y,Y\bul H,\pi_Y,\rho_Y)$ by considering the following pushout
\[
\xymatrix@!C{
&& X\ar[ddll]_{p} \ar[dr]^{\rho_X}\\
&&& X\ot H \ar[dr]^-{p_1}\\
Y \ar[drr]_{\rho_Y} &&&& P \ar[dll]^{\pi'_Y}\\
&& Y\bul H 
\ar@{}[u]|<<<{\pushout}
}
\]
and taking $\pi_Y=\pi'_Y\circ p_2$. Similar to the previous example, one can show that $Y$ is a quasi or geometric partial comodule if $X$ is so.
\end{example}

\begin{example}[Partial action in the affine plane]\exlabel{affine}
Since the affine group $(\AA^2,+)$ acts strictly transitive on the affine plane, the algebra $A=k[x,y]$ is a Galois object over the bialgebra $H=k[x,y]$. In particular, $A$ is an $H$-comodule with coaction $\rho:k[x,y]\to k[x,y]\ot k[x,y]\cong k[x,y,x',y'],\ \rho(f)(x,y,x',y')=f(x+x',y+y')$ where $f\in k[x,y]$. Considering the quotient $B=k[x,y]/(xy)$ we find by the previous example that $B$ is a partial $H$-comodule with 
$B\bul H=k[x,y,x',y']/\rho((xy))$. 
Remark that $\rho((xy))$ is not an ideal in $k[x,y,x',y']$, hence $B\bul H$ is not an algebra quotient of $B\ot H$.
Furthermore, $\rho_B:B\to B\bul H$ given by $\rho_B(\ol f)(x,y,x',y')=\ol f(x+x',y+y')$ for all $\ol f\in B$.

As we have remarked in the introduction, it follows from the results of \cite{BV} that this example cannot be described by means of partial actions in the sense of Caenepeel-Janssen (see \exref{CJ}).
\end{example}

\begin{example}[A partial action on the quantum plane]\exlabel{quantum}
By a similar construction as in the previous example, we obtain a partial action on the quantum plane.
Consider the tensor algebra $T(V)$ where $V$ is a $2$-dimensional vector space. Then this tensor algebra is known to be a Hopf algebra and it coacts on itself by the comultiplication. We can view $T(V)$ as the free algebra $k\bk{x,y}$ with two generators $x,y$ and the coaction is then given by the comultiplication $\Delta:k\bk{x,y}\to k\bk{x,y}\ot k\bk{x,y},\ \Delta(x)=x\ot 1+1\ot x, \Delta(y)=y\ot 1+1\ot y$. Now consider the quantum plane $k_q[x,y]=k\bk{x,y}/(xy-qyx)$. By \exref{quotient}, the quantum plane is a partial comodule over the tensor algebra. 
\end{example}

\begin{example}\exlabel{CJ}
Consider a partial coaction in the sense of Caenepeel-Janssen \cite{CJ}. This means that $H$ is a Hopf algebra, $A$ is an algebra and
\[\rho:A\to A\ot H, \rho(a)=a_{[0]}\ot a_{[1]}\]
is a linear map satisfying the following axioms:
\begin{enumerate}[{(CJ1)}]
\item $(ab)_{[0]}\ot (ab)_{[1]}=a_{[0]}b_{[0]}\ot a_{[1]}b_{[1]}$
\item $a_{[0][0]}\ot a_{[0][1]}\ot a_{[1]}= a_{[0]}1_{[0]}\ot a_{[1](1)}1_{[1]} \ot a_{[1](2)}$
\item $a_{[0]}\epsilon(a_{[1]})=a$
\end{enumerate}

Then we define $e=1_{[0]}\ot 1_{[1]}\in A\ot H$, which is an idempotent, because of the first axiom. Then we get that 
\[A\ot H=(A\ot H)e\oplus (A\ot H)e'\]
where $e'=1-e$.
If we put $A\bul H=(A\ot H)e$, then we have that the map
\[\pi:A\ot H\to A\bul H, a\ot h\mapsto a1_{[0]}\ot h1_{[1]}\]
is surjective with right inverse the inclusion map and kernel $(A\ot H)e'=\{a\ot h-a1_{[0]}\ot h1_{[1]}~|~a\ot h\in A\ot H\}=N$ and $A\bul H=(A\ot H)/N$. 

This allows us to define the partial action datum over $A$:
\[\xymatrix{A\ar[dr]_-{\ol\rho=\pi\circ\rho} \ar[rr]^\rho && A\ot H \ar[dl]^-\pi\\ 
& A\bul H
}\]
To check the coassociativity, we consider the diagram
\[
\resizebox{1.1\textwidth}{!}{
\xymatrix@!C{
A \ar[rr]^\rho \ar[dr]_{\ol\rho} && A\ot H \ar[rr]^{\rho\ot H} \ar@{->>}[dl]^{\pi}\ar[dr]_{\ol\rho\ot H} && 
A\ot H\ot H \ar@{->>}[dl]^{\pi\ot H} \ar@{=}[dr] && A\ot H\ar[ll]^{A\ot \Delta}\ar[dl]^{A\ot \Delta} \ar@{->>}[dr]_{\pi} && A \ar@{.>}@{->}[dl]^{\ol \rho} \ar[ll]_\rho\\
& A\bul H\ar[dr]_{\ol\rho\bul H} && (A\bul H)\ot H\ar@{->>}[dl]^{\pi_{A\bul H}} \ar@{=}[dr] && A\ot H\ot H\ar@{->>}[dl]^{\pi\ot H}\ar@{->>}[dr]_{\pi_{A,\Delta}} && A\bul H\ar[dl]^{A\bul \Delta} & &\\
&&(A\bul H)\bul H\ar@{=}[dr] \ar@{}[u]|<<<{\pushout}&&(A\bul H)\ot H\ar@{->>}[dl]^{\pi_{A\bul H}}\ar@{->>}[dr]_{\pi'_{A,\Delta}} \ar@{}[u]|<<<{\pushout}&&A\bul (H\ot H)\ar@{->>}[dl]^{\pi'_{X}} \ar@{}[u]|<<<{\pushout}&&\\
&&&(A\bul H)\bul H  
\ar[dr]_{\theta_1}
\ar@{}[u]|<<<{\pushout} && A\bul (H\bul H) \ar[dl]^{\theta_2} \ar@{}[u]|<<<{\pushout}\\
&&&&\Theta  \ar@{}[u]|<<<{\pushout} &&&&
}
}
\]
Using \leref{pushoutvect}, we find that $(A\bul H)\bul H\cong ((A\bul H)\ot H)/K$, $A\bul(H\bul H)=((A\bul H)\ot H)/L$ and $\Theta=((A\bul H)\ot H)/(K+L)$
where
\[K=\{a_{[0]}\ot a_{[1]}\ot h-a_{[0]}1_{[0][0]}\ot a_{[1]}1_{[0][1]}\ot h1_{[1]}|a\ot h\in A\ot H\}\]
and
\[L=\{a1_{[0]}\ot h_{(1)}1_{[1]}\ot h_{(2)}-a1_{[0]}1_{[0']}\ot h_{(1)}1_{[1](1)}1_{[1']}\ot h_{(2)}1_{[1](2)}|a\ot h\in A\ot H\}\]
Although in general $K$ and $L$ are not necessarily isomorphic subspaces of $(A\bul H)\ot H$, we see because of axiom (CJ2) that $(\pi\ot H)\circ (\rho\ot H)\circ \rho(a)=(\pi\ot H)\circ (A\ot \Delta)\circ \rho(a)$ in $(A\bul H)\ot H$. Hence the coassociativity holds in particular in the quotient $\Theta$. We can conlude that a Caenepeel-Janssen partial action induces a quasi (and not geometric) partial comodule.
\end{example}

\begin{example}\exlabel{XbulHcomodule}
Let $(X,X\bul H,\pi_X,\rho_X)$ be a quasi partial $H$-comodule. We know that $X\ot H$ is a (global) right $H$-comodule with coaction $X\ot \Delta$. By applying the result of \exref{quotient}, we find that the epimorphism $\pi_X:X\ot H\to X\bul H$ induces  
 $X\bul H$ with the structure of a partial $H$-comodule under the partial coaction 
\[
\xymatrix@!C{
X\bul H \ar[dr]_-{\pi'_X\circ X\bul\Delta} && (X\bul H)\ot H \ar@{->>}[dl]^-{\pi'_{X,\Delta}}\\
& X\bul(H\bul H)
}
\]
which is geometric by \exref{quotient}. 
Therefore, we obtain that the following pushouts are isomorphic, where we denote $\ol{X\bul\Delta}=\pi'_X\circ X\bul\Delta$.
\begin{eqnarray}
\xymatrix@!C{
&(X\bul H)\ot H \ar@{->>}[dl]_-{\pi'_{X,\Delta}} \ar[dr]^-{\ol{X\bul\Delta}\ot H}\\
X\bul (H\bul H) \ar[dr]_{\ol{X\bul(\Delta \bul H)}} && (X\bul (H\bul H))\ot H \ar@{->>}[dl]^{\pi'_{X,\Delta,H}}\\
&X\bul ((H\bul H))\bul H)
}
\end{eqnarray}
\begin{eqnarray}
\hspace{-1cm}
\xymatrix@!C{
& (X\bul H)\ot H \ar@{->>}[dl]_-{\pi'_{X,\Delta}} \ar[dr]^-{(X\bul H)\ot \Delta}\\
X\bul (H\bul H) \ar[dr]_-{X\bul (H\bul \Delta)} && (X\bul H)\ot H\ot H \ar@{->>}[dl]^{\pi_{X,H,\Delta}} \ar@{->>}[dr]^{\pi'_{X,\Delta}\ot H} \\
& X\bul (H\bul (H\ot H)) \ar@{->>}[dr]_-{\pi''_{X,1}} && (X\bul (H\bul H))\ot H \ar@{->>}[dl]^{\pi'_{X,H,\Delta}} \\
&& X\bul (H\bul (H\bul H))
}
\end{eqnarray}
Denote $\ol{(X\bul (H\bul \Delta))}=\pi''_{X,1}\circ (X\bul (H\bul \Delta))$. Then we find that the following morphisms are identical up-to-isomorphism of their codomains. 
\[\ol{(X\bul (H\bul \Delta))}\circ \ol{(X\bul \Delta)}\simeq \ol{(X\bul (\Delta\bul H))}\circ \ol{(X\bul \Delta)}\]

When $X$ itself is a geometric partial comodule, one can use the isomorphism $\theta:X\bul (H\bul H)\cong (X\bul H)\bul H$ to rewrite the above pushouts as
\begin{eqnarray*}
 (X\bul (H\bul H))\bul H &\cong& X\bul ((H\bul H))\bul H) \\
\cong(X\bul H)\bul (H\bul H)  &\cong& X\bul (H\bul (H\bul H))
\end{eqnarray*}
we will explain this in more detail in \seref{gencoass}.
\end{example}

\subsection{Partial comodule morphisms}

\begin{definition}
If $(X,\pi_X,\rho_X)$ and $(Y,\pi_Y,\rho_Y)$ are two partial $H$-comodule data, then a {\em morphism} of partial $H$-comodule data is a couple 
$(f,f\bul H)$ of morphisms in $\Cc$, where $f:X\to Y$ and $f\bul H:X\bul H\to Y\bul H$ such that the following two squares commute
\[
\xymatrix{
X\ar[rr]^-f \ar[d]_{\rho_X} && Y\ar[d]^{\rho_Y}\\
X\bul H \ar[rr]^-{f\bul H} && Y\bul H\\
X\ot H \ar[rr]^-{f\ot H} \ar@{->>}[u]^{\pi_X} && Y\ot H \ar@{->>}[u]^{\pi_Y}
}
\]
A morphism of a quasi, lax or geometric partial comodule is a morphism of the underlying partial comodule data. 
We denote the categories of quasi, lax and geometric partial $H$-comodules respectively by
$\qPMod^H$, $\lPMod^H$ and $\gPMod^H$. When we denote $\PMod^H$, we mean any of the three partial comodule categories, without specifying which one.

If $H$ is an algebra in $\Cc$, then $H$ is a coalgebra in $\Cc^{op}$ and one defines the categories of partial modules as the opposite of the corresponding categories of partial comodules over the coalgebra $H$ in $\Cc^{op}$
\end{definition}

\begin{remark}\relabel{fbulunique}
If $(f,f\bul H)$ is a morphism of partial comodule data, then $f\bul H$ is determined by $f$. Indeed, suppose that both $(f, f\bul H), (g,g\bul H):X\to Y$ are morphisms of comodule data with $f=g$, then using the fact that $\pi_X$ is an epimorphism, it follows that $f\bul H=g\bul H$. This justifies that from now on we will denote a partial morphism $(f,f\bul H)$ just as $f$.

If moreover $\pi_X$ is a regular epimorphism (that is, it is a coequalizer) in $\Cc$, then one can express the property of the existence of $f\bul H$ more explicitly. We spell this out in the abelian case (were all epimorphisms are regular) in the next lemma.
\end{remark}

\begin{lemma}\lelabel{comodulemorphismabelian}
Suppose that the category $\Cc$ is abelian. Let $(X,\pi_X,\rho_X)$ and $(Y,\pi_Y,\rho_Y)$ be partial $H$-comodule data in $\Cc$. Then a morphism $f:X\to Y$ satisfies $(f\ot H) (\ker \pi_X)\subset \ker\pi_Y$ if and only if there exists a unique morphism $f\bul H:X\bul H\to Y\bul H$ such that $\pi_Y\circ (f\ot H)=(f\bul H)\circ \pi_X$.
\end{lemma}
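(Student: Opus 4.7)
The statement is essentially the universal property of the cokernel applied in the ambient abelian category, so the proof should be short and formal. The plan is to prove the two implications separately, with the nontrivial direction being the construction of $f\bul H$.

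For the ``only if'' direction, I would argue as follows. Suppose $f\bul H:X\bul H\to Y\bul H$ exists with $\pi_Y\circ(f\ot H)=(f\bul H)\circ \pi_X$. Let $\iota_X:\ker\pi_X\hookrightarrow X\ot H$ be the kernel inclusion. Then
\[
\pi_Y\circ(f\ot H)\circ \iota_X=(f\bul H)\circ \pi_X\circ \iota_X=0,
\]
so $(f\ot H)\circ \iota_X$ factors through $\ker\pi_Y\hookrightarrow Y\ot H$, which is precisely the statement $(f\ot H)(\ker\pi_X)\subset\ker\pi_Y$.

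For the ``if'' direction, I would use that in an abelian category every epimorphism is the cokernel of its kernel. Hence $\pi_X:X\ot H\to X\bul H$ is a cokernel of $\iota_X:\ker\pi_X\hookrightarrow X\ot H$. By the hypothesis $(f\ot H)(\ker\pi_X)\subset\ker\pi_Y$, the composite $(f\ot H)\circ \iota_X$ factors through $\ker\pi_Y$, so $\pi_Y\circ(f\ot H)\circ \iota_X=0$. The universal property of the cokernel then yields a unique morphism $f\bul H:X\bul H\to Y\bul H$ with $(f\bul H)\circ \pi_X=\pi_Y\circ(f\ot H)$, which is exactly what we wanted. Uniqueness of $f\bul H$ also follows automatically from the fact that $\pi_X$ is an epimorphism, as already noted in \reref{fbulunique}.

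There is no real obstacle here beyond recalling that abelian categories have regular-equals-plain epimorphisms; the argument is purely a diagram-chase via the kernel--cokernel universal property.
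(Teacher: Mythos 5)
Your proof is correct and follows essentially the same route as the paper: both directions rest on the fact that in an abelian category $\pi_X$ is the cokernel of its kernel, so existence and uniqueness of $f\bul H$ come from the universal property of the cokernel, while the converse uses the universal property of $\ker\pi_Y$. No gaps.
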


\begin{proof}
The existence and uniqueness of $f\bul H$ follows directly by universal property of $(X\bul H,\pi_X)=\coker(\ker(\pi_X))$ in the abelian category $\Cc$. Conversely, if $f\bul H$ with the stated property exists, then $\pi_Y\circ (f\ot H)(\ker\pi_X)=(f\bul H)\circ \pi_X(\ker\pi_X)=0$ and by the universal property of $\ker\pi_Y$ we find then that $(f\ot H) (\ker \pi_X)\subset \ker\pi_Y$.
\end{proof}

\begin{remark}
In case $\Cc=\Vect$, one then finds that a map $f:X\to Y$ between two geometric partial modules is a morphism of partial comodules if and only if the following conditions hold:
\begin{enumerate}
\item $f(x)\bul h=0$ if $x\bul h=0$;
\item $f(x_{[0]})\bul x_{[1]}=f(x)_{[0]}\bul f(x)_{[1]}$;
\end{enumerate}
where we used the notation introduced in \reref{ParSweedler}, and where the second condition make sense thanks to the first one. 
\end{remark}

\begin{lemma}
If $f:(X,\pi_X,\rho_X,\theta_X)\to (Y,\pi_Y,\rho_Y,\theta_Y)$ is a morphism of quasi partial $H$-comodules, then there exist unique morphisms $(f\bul H)\bul H$, $f\bul(H\ot H)$, $f\bul (H\bul H)$ and $\theta_f$ such that the following diagrams commute
\[
\xymatrix{
X\bul H \ar[r]^-{\rho_X\bul H} \ar[d]_{f\bul H} & (X\bul H)\bul H \ar[d]^{(f\bul H)\bul H} & (X\bul H)\ot H \ar[l]_{\pi_{X\bul H}} \ar[d]^{(f\bul H)\ot H} \\
Y\bul H \ar[r]^-{\rho_Y\bul H}  & (Y\bul H)\bul H & (Y\bul H)\ot H \ar[l]_{\pi_{Y\bul H}} \\
}\quad 
\xymatrix{
X\bul H \ar[r]^-{X\bul\Delta} \ar[d]_{f\bul H} & X\bul (H\ot H) \ar[d]^{f\bul (H\ot H)} & X\ot H\ot H \ar[l]_{\pi_{X,\Delta}} \ar[d]^{f\ot H\ot H} \\
Y\bul H \ar[r]^-{Y\bul\Delta}  & Y\bul (H\ot H) & Y\ot H\ot H \ar[l]_{\pi_{Y,\Delta}} \\
}\]
\[
\xymatrix{
X\bul H \ar[r]^-{\pi'_X\circ X\bul\Delta} \ar[d]_{f\bul H} & X\bul (H\bul H) \ar[d]^{f\bul (H\bul H)} & (X\bul H)\ot H \ar[l]_{\pi'_{X,\Delta}} \ar[d]^{(f\bul H)\ot H} \\
Y\bul H \ar[r]^-{\pi'_Y\circ Y\bul\Delta}  & Y\bul (H\bul H) & (Y\bul H)\ot H \ar[l]_{\pi'_{Y,\Delta}} \\
}\quad
\xymatrix{
(X\bul H)\bul H \ar[r]^-{\theta_1^X} \ar[d]_{(f\bul H)\bul H} & \Theta_X \ar[d]^{\theta_f} & X\bul (H\bul H) \ar[l]_-{\theta_2^X} \ar[d]^{f\bul (H\bul H)} \\
(Y\bul H)\bul H \ar[r]^-{\theta_1^Y}  & \Theta_Y & Y\bul (H\bul H) \ar[l]_-{\theta_2^Y} \\
}
\]
If moreover $X$ and $Y$ are lax, then the following diagram commutes as well.
\[
\xymatrix{
(X\bul H)\bul H \ar[rr]^-{(f\bul H)\bul H} \ar[d]_-{\theta_X} && (Y\bul H)\bul H\ar[d]^-{\theta_Y}\\
X\bul (H\bul H) \ar[rr]^-{f\bul (H\bul H)} && Y\bul (H\bul H)
}
\]
\end{lemma}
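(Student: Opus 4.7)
The plan is to construct each of the four morphisms by the universal property of the pushout defining its domain; uniqueness is then automatic, and what remains is to exhibit a compatible cocone in each case. All cocone compatibilities will reduce to the two defining identities of a morphism of partial comodule data, namely $(f\bul H)\circ \pi_X = \pi_Y\circ(f\ot H)$ and $(f\bul H)\circ \rho_X = \rho_Y\circ f$.

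For $(f\bul H)\bul H$, I would take as cocone on the pushout presentation $(\pi_X,\rho_X\ot H)$ of $(X\bul H)\bul H$ the pair $\bigl((\rho_Y\bul H)\circ(f\bul H),\ \pi_{Y\bul H}\circ((f\bul H)\ot H)\bigr)$, and check compatibility on $X\ot H$ by substituting the defining identity $\pi_{Y\bul H}\circ (\rho_Y\ot H) = (\rho_Y\bul H)\circ \pi_Y$ of $\rho_Y\bul H$ together with the two displayed equations above. The morphisms $f\bul(H\ot H)$ and $f\bul(H\bul H)$ are built in the same fashion, from the cocones $\bigl((Y\bul\Delta)\circ (f\bul H),\ \pi_{Y,\Delta}\circ (f\ot H\ot H)\bigr)$ and $\bigl(\pi'_Y\circ f\bul(H\ot H),\ \pi'_{Y,\Delta}\circ((f\bul H)\ot H)\bigr)$ on the pushouts defining $X\bul(H\ot H)$ and $X\bul(H\bul H)$ respectively; their compatibilities use the definitions of $Y\bul\Delta$, $\pi_{Y,\Delta}$, $\pi'_Y$ and $\pi'_{Y,\Delta}$ together with the squares already constructed. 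Finally, $\theta_f$ is obtained from the cocone $\bigl(\theta_1^Y\circ ((f\bul H)\bul H),\ \theta_2^Y\circ (f\bul(H\bul H))\bigr)$ on the pushout defining $\Theta_X$; its compatibility is precisely the conjunction of the right-hand squares already constructed for $(f\bul H)\bul H$ and $f\bul(H\bul H)$, combined with the pushout identity $\theta_1^Y\circ \pi_{Y\bul H} = \theta_2^Y\circ \pi'_{Y,\Delta}$ of $\Theta_Y$.

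For the additional lax diagram, both $f\bul(H\bul H)\circ \theta_X$ and $\theta_Y\circ ((f\bul H)\bul H)$ are morphisms from $(X\bul H)\bul H$ to $Y\bul(H\bul H)$. Since $\pi_{X\bul H}$ is an epimorphism, it suffices to equate them after precomposing with $\pi_{X\bul H}$. Using the characterizing identity $\theta_X\circ \pi_{X\bul H} = \pi'_{X,\Delta}$ recorded in \reref{ParSweedler} on the left and the analogous identity $\theta_Y\circ \pi_{Y\bul H} = \pi'_{Y,\Delta}$ together with the right square of the $(f\bul H)\bul H$-diagram on the right, both composites collapse to $\pi'_{Y,\Delta}\circ ((f\bul H)\ot H)$ by the right square of the $f\bul(H\bul H)$-diagram.

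No substantial obstacle is expected: the whole argument is a book-keeping exercise in the universal properties of five pushouts, and at each step the correct cocone is unambiguously dictated by the shape of the target diagram. The mildest subtlety lies in the $\theta_f$ step, where the two naturality squares already built must be combined with the pushout square of $\Theta_Y$ to witness the required compatibility; this is however immediate once the preceding squares are in place.
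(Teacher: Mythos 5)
Your proposal is correct and follows exactly the paper's approach: each of the four morphisms is produced by the universal property of the corresponding pushout from a cocone whose compatibility reduces to the two defining squares of a morphism of partial comodule data, and the lax diagram is checked by precomposing with the epimorphism $\pi_{X\bul H}$. The paper's own proof is just a one-line appeal to these universal properties with a single example diagram, so your write-up is the same argument carried out in more detail.
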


\begin{proof}
This follows by the universal property of the considered pushouts. 
For example, $(f\bul H)\bul H: (X\bul H)\bul H\to (Y\bul H)\bul H$ is defined as the unique morphism that makes the following diagrams commute, where the inner and outer diamond are pushouts
\[
\xymatrix@!C{
&& X\ot H \ar[lldd]_-{\pi_X} \ar[ddrr]^-{\rho_X\ot H} \ar[d]^{f\ot H} \\
&& Y\ot H \ar[dl]_-{\pi_Y} \ar[dr]^-{\rho_Y\ot H} \\
X\bul H \ar[ddrr]_-{\rho_X\bul H} \ar[r]^{f\bul H} & Y\bul H \ar[dr]_{\rho_Y\bul H} && (Y\bul H)\ot H 
\ar[dl]^{\pi_{Y\bul H}} & (X\bul H)\ot H \ar[ddll]^{\pi_{X\bul H}} \ar[l]_{(f\bul H)\ot H} \\
&& (Y\bul H)\bul H \\
&& (X\bul H)\bul H \ar[u]^{(f\bul H)\bul H}
}
\]
\end{proof}

\subsection{Coassociativity}\selabel{gencoass}

For a usual $H$-comodule $(M,\rho)$, it is well-known that the coassociativity condition implies a generalized coassociativity condition saying that all morphisms from $M$ to $M\ot H^{\ot n}$ that is constructed out of a combination of $\rho$, $\Delta$ and identity maps are identical. Our next aim is to prove a similar theorem for geometric partial comodules. To this end, let consider the following compositions of partial mappings from $X$ to $X\ot H\ot H\ot H$.
Let us first construct \[\rho^1=((\rho\bul H)\bul H)\circ (\rho\bul H)\circ \rho:X\to ((X\bul H)\bul H)\bul H\] which is done in the following diagram, where all quadrangles are pushouts.
\[
\scalebox{.8}{
\hspace{-1cm}
\xymatrix@!C{
X \ar@{.>}[r]^{\rho} \ar[d]_{\rho_X} & X\ot H \ar@{->>}[dl]^{\pi_X} \ar[dr]_{\rho_X\ot H} \ar@{.>}[rr]^{\rho\ot H} && X\ot H\ot H \ar@{.>}[r]^{\rho\ot H\ot H} \ar@{->>}[dl]^{\pi_X\ot H} \ar[dr]_{\rho_X\ot H\ot H} & X\ot H\ot H \ot H \ar@{->>}[d]^{\pi_X\ot H\ot H}\\
 X\bul H \ar[dr]_{\rho_X\bul H} & (a) & (X\bul H)\ot H \ar[dr]_{(\rho_X\bul H)\ot H} \ar@{->>}[dl]^{\pi_{X\bul H}}  && (X\bul H)\ot H \ot H \ar@{->>}[dl]^{\pi_{X\bul H}\ot H} \\
& (X\bul H)\bul H \ar[dr]_{(\rho_X\bul H)\bul H} && ((X\bul H)\bul H)\ot H \ar@{->>}[dl]^{\pi_{(X\bul H)\bul H}}\\
&& ((X\bul H)\bul H)\bul H
}
}
\]
In the same way, we can construct \[\rho^2:((\rho\bul H)\bul H)\circ \ol{(X\bul \Delta)}\circ \rho:X\to (X\bul H)\bul (H\bul H),\]
where we denote as before $\ol{X\bul \Delta}=\pi'_X\circ (X\bul \Delta)$ and which is defined by the following diagram. 
\[
\scalebox{.8}{
\hspace{-1cm}
\xymatrix@!C{
X \ar@{.>}[r]^{\rho} \ar[d]_{\rho_X} & X\ot H \ar@{->>}[dl]^{\pi_X} \ar[dr]_{X\ot \Delta} \ar@{.>}[rr]^{X\ot \Delta} && X\ot H\ot H  \ar@{->>}[dd]^{\pi_{X}\ot H} \ar@{.>}[r]^{\rho\ot H\ot H} \ar@{=}[dl] \ar[dr]_{\rho_X\ot H\ot H} & X\ot H\ot H \ot H  \ar@{->>}[d]^{\pi_X\ot H\ot H}\\
 X\bul H \ar[dr]_{X\bul \Delta} & (b) & X\ot H\ot H \ar@{->>}[dr]_{\pi_X\ot H} \ar@{->>}[dl]^{\pi_{X,\Delta}}   && (X\bul H)\ot H \ot H 
\ar@{->>}[dd]^{\pi_{X\bul H}\ot H}
\\
& X\bul (H\ot H) \ar@{->>}[dr]_{\pi'_X} & (c) & (X\bul H)\ot H \ar@{->>}[dl]^{\pi'_{X,\Delta}} \ar[dr]_{(\rho_X\bul H)\ot H}\\
&& X\bul (H\bul H) \ar[dr]_{\rho_X\bul(H\bul H)} && ((X\bul H)\bul H)\ot H \ar@{->>}[dl]^{\pi''_{X,\Delta}} \\
&&& (X\bul H)\bul (H\bul H)
}
}
\]
Since we know by the coassociativity on $X$ that the pushouts $(X\bul H)\bul H$ given by the diagram (a) is isomorphic to the pushout  $X\bul (H\bul H)$ which is the combinination of diagrams (b) and (c). 
Therefore it follows that the pushouts $((X\bul H)\bul H)\bul H$ and $(X\bul H)\bul (H\bul H)$ constructed above are isomorphic as well, in such a way that the constructed maps $\rho^1$ and $\rho^2$ from $X$ into these pushouts are identical up to this isomorphism.

Next, we construct a morphism
\[\rho^3:\ol{((X\bul H)\bul \Delta)}\circ (\rho\bul H)\circ \rho: X\to (X\bul H)\bul (H\bul H)\]
denoting $\ol{((X\bul H)\bul \Delta)}=\pi'_{X\bul H}\circ ((X\bul H)\bul \Delta)$,
as in the following diagram.
\[
\scalebox{.8}{
\hspace{-1cm}
\xymatrix@!C{
X \ar@{.>}[r]^\rho \ar[d]_{\rho_X} & X\ot H \ar@{->>}[dl]^{\pi_X} \ar[dr]_{\rho_X\ot H} \ar@{.>}[rr]^{\rho\ot H} && X\ot H\ot H \ar@{.>}[r]^{X\ot H\ot \Delta} \ar@{->>}[dl]^{\pi_X\ot H} \ar[dr]_{X\ot H\ot \Delta} 
& X\ot H\ot H \ot H \ar@{=}[d]\\
 X\bul H \ar[dr]_{\rho_X\bul H} & (a) & (X\bul H)\ot H \ar[dr]_{(X\bul H)\ot \Delta} \ar@{->>}[dl]^{\pi_{X\bul H}}  && X\ot H\ot H \ot H \ar@{->>}[dl]^{\pi_{X}\ot H\ot H}  
\\
& (X\bul H)\bul H \ar[dr]_{(X\bul H)\bul \Delta} && (X\bul H)\ot H\ot H \ar@{->>}[dl]^{\pi_{X\bul H,\Delta}} \ar@{->>}[dr]^{\pi_{X\bul H}\ot H}
\\
&& (X\bul H)\bul (H\ot H) \ar@{->>}[dr]_{\pi'_{X\bul H}} && ((X\bul H)\bul H)\ot H \ar@{->>}[dl]^{\pi'_{X\bul H,\Delta}} \\
&&& (X\bul H)\bul (H \bul H)
}
}
\]
Let us first remark that the constructed pushout is the same as the one from the previous diagram. Indeed, we had constructed $(X\bul H)\bul (H \bul H)$ as the pushout of $\pi_X$ with
\begin{eqnarray*}
((\rho_X\bul H)\ot H)\circ (\pi_X\ot H)\circ (X\ot \Delta)&=& (\pi_{X\bul H}\ot H)\circ (\rho_X\ot H\ot H)\circ (X\ot \Delta)\\
&=&  (\pi_{X\bul H}\ot H)\circ ((X\bul H)\ot \Delta)\circ (\rho_X\ot H)
\end{eqnarray*}
It follows that the morphism $\rho^3$ is identical to $\rho^2$ (and to $\rho^1$).

Furthermore, one sees that the pushout (a) appears again in the last diagram, by a same argument as before, this can be replaced by the combination of the pushouts (b) and (c), since $\theta_X:X\bul(H\bul H)\to (X\bul H)\bul H$ is an isomorhpism. This leads us to the map 
\[\rho^4:\ol{((X\bul H)\bul \Delta)}\circ \ol{X\bul \Delta}\circ \rho: X\to X\bul (H\bul (H\bul H))\]
\[
\scalebox{.8}{
\hspace{-1cm}
\xymatrix@!C{
X \ar@{.>}[r]^\rho \ar[d]_{\rho_X} & X\ot H \ar@{->>}[dl]^{\pi_X} \ar[dr]^{(\pi_{X}\ot H)\circ (X\ot\Delta)} \ar@{.>}[rr]^{\rho\ot H} && X\ot H\ot H \ar@{.>}[r]^{X\ot H\ot \Delta} \ar@{->>}[dl]^{\pi_X\ot H} \ar[dr]_{X\ot H\ot \Delta} 
& X\ot H\ot H \ot H \ar@{=}[d]\\
 X\bul H \ar[dr]_{\ol{X\bul\Delta}} & (b)+(c) & (X\bul H)\ot H \ar[dr]_{(X\bul H)\ot \Delta} \ar@{->>}[dl]^{\pi'_{X,\Delta}}  && X\ot H\ot H \ot H \ar@{->>}[dl]^{\pi_{X}\ot H\ot H}  
\\
& X\bul (H\bul H) \ar[dr]_{X\bul (H\bul \Delta)} && (X\bul H)\ot H\ot H \ar@{->>}[dl]^{\pi_{X,H,\Delta}} \ar@{->>}[dr]^{\pi'_{X,\Delta}\ot H}
\\
&& X\bul (H\bul (H\ot H)) \ar@{->>}[dr]_{\pi''_{X,1}} && (X\bul (H\bul H))\ot H \ar@{->>}[dl]^{\pi'_{X,H,\Delta}} \\
&&& X\bul (H\bul (H \bul H))
}
}
\]
Remark that $(X\bul H)\bul (H \bul H)$ is the pushout of the pair $(\pi_{X\bul H},(\pi_{X\bul H}\ot H)\circ ((X\bul H)\ot \Delta)$ and $X\bul (H\bul (H \bul H))$ is the pushout of the pair $(\pi'_{X,\Delta},(\pi'_{X,\Delta}\ot H)\circ ((X\bul H)\ot \Delta)$. Since $\pi_{X\bul H}=\theta_X\circ \pi'_{X,\Delta}$ and $\theta_X$ is an isomorphism, it follows that both pushouts are isomorphic and $\phi^3$ and $\phi^4$ are identical up to this isomorphism.

Let us now consider the morphism 
\[\rho^5:\ol{((X\bul \Delta)\bul H)}\circ (\rho\bul H)\circ \rho: X\to (X\bul (H\bul H))\bul H\]
where $\ol{((X\bul \Delta)\bul H)}=(\pi'_X\bul H)\circ ((X\bul \Delta)\bul H)$ and that is given by the following diagram
\[
\scalebox{.8}{
\hspace{-1cm}
\xymatrix@!C{
X \ar@{.>}[r]^{\rho} \ar[d]_{\rho_X} & X\ot H \ar@{->>}[dl]^{\pi_X} \ar[dr]_{\rho_X\ot H} \ar@{.>}[rr]^{\rho \ot H} && X\ot H\ot H \ar@{.>}[r]^{X\ot \Delta\ot H} \ar@{->>}[dl]^{\pi_X\ot H} \ar[dr]_{X\ot \Delta\ot H} 
& X\ot H\ot H \ot H \ar@{=}[d]\\
X\bul H \ar[dr]_{\rho_X\bul H} & (a) & (X\bul H)\ot H \ar[dr]_{(X\bul\Delta)\ot H} \ar@{->>}[dl]^{\pi_{X\bul H}}  && X\ot H\ot H \ot H \ar@{->>}[dl]^{\pi_{X,\Delta}\ot H} 
\\
& (X\bul H)\bul H \ar[dr]_{(X\bul \Delta)\bul H} && (X\bul (H\ot H))\ot H \ar@{->>}[dl]^{\pi_{X\bul(H\ot H)}} \ar@{->>}[dr]^{\pi'_X\ot H}
\\
&& (X\bul (H\ot H))\bul H \ar@{->>}[dr]_{\pi'_{X}\bul H} && (X\bul (H\bul H))\ot H \ar@{->>}[dl]^{\pi_{X\bul(H\bul H)}} \\
&&& (X\bul (H\bul H))\bul H
}
}
\]
And again, by replacing the pullback (a) by the pullback (b)+(c), we obtain a map that is the same up-to-isomorphism the same as $\rho^5$:
\[\rho^6:\ol{(X\bul (\Delta\bul H))}\circ \ol{X\bul \Delta} \circ \rho: X\to X\bul ((H\bul H)\bul H)\]
where $\ol{(X\bul (\Delta\bul H))}=\pi''_{X,2}\circ (X\bul (\Delta\bul H))$. This map $\rho^6$ is defined by the following diagram.
\[
\scalebox{.8}{
\hspace{-1cm}
\xymatrix@!C{
X \ar@{.>}[r]^{\rho} \ar[d]_{\rho_X} & X\ot H \ar@{->>}[dl]^{\pi_X} \ar[dr]^{(\pi_X\ot H)\circ(X\ot \Delta)} \ar@{.>}[rr]^{\rho \ot H} && X\ot H\ot H \ar@{.>}[r]^{X\ot \Delta\ot H} \ar@{->>}[dl]^{\pi_X\ot H} \ar[dr]_{X\ot \Delta\ot H} 
& X\ot H\ot H \ot H \ar@{=}[d]\\
X\bul H \ar[dr]_{\ol{X\bul \Delta}} & (b)+ (c)& (X\bul H)\ot H \ar[dr]_{(X\bul\Delta)\ot H} \ar@{->>}[dl]^{\pi'_{X,\Delta}}  && X\ot H\ot H \ot H \ar@{->>}[dl]^{\pi_{X,\Delta}\ot H} 
\\
& X\bul (H\bul H) \ar[dr]_{X\bul (\Delta\bul H)} && (X\bul (H\ot H))\ot H \ar@{->>}[dl]^{\pi_{X,\Delta,H}} \ar@{->>}[dr]^{\pi'_X\ot H}
\\
&& X\bul ((H\ot H))\bul H) \ar@{->>}[dr]_{\pi''_{X,2}} && (X\bul (H\bul H))\ot H \ar@{->>}[dl]^{\pi'_{X,\Delta,H}} \\
&&& X\bul ((H\bul H))\bul H)
}
}
\]
By \exref{XbulHcomodule}, we know that $X\bul ((H\bul H))\bul H)\cong X\bul (H\bul(H\bul H))$ and the maps $\rho^4$ and $\rho^6$ are identical up to this isomorphism. 

Hence we have hereby proven that the all above constructed pushouts are isomorphic and the maps $\rho^i$ ($i=1,\ldots,6$) are identical up to these isomorphisms. All this is summarized in the following result.

\begin{theorem}[generalized coassociativity]\thlabel{gencoass}
Let $(X,\pi_X,\rho_X,\theta_X)$ be a geometric partial comodule. 
Then the pushouts introduced above are all isomorphic
\begin{eqnarray*}
X\bul (H\bul(H\bul H))&\cong& (X\bul H)\bul (H\bul H)\\
\cong ((X\bul H)\bul H)\bul H &\cong& (X\bul (H\bul H))\bul H\\  &\cong & X\bul ((H\bul H)\bul H)
\end{eqnarray*}
Moreover up to these isomorphisms, the following morphisms $X\to X\bul H\bul H\bul H$ are identical 
\begin{eqnarray*}
 \ol{(X\bul H\bul \Delta)}\circ \ol{(X\bul \Delta)}\circ \rho & \simeq 
(\rho\bul H\bul H)\circ \ol{(X\bul \Delta)}\circ \rho \simeq &
 \ol{(X\bul H\bul \Delta)}\circ (\rho\bul H)\circ \rho \simeq\\
(\rho\bul H\bul H)\circ (\rho\bul H)\circ \rho &
\simeq \ol{(X\bul \Delta\bul H)}\circ (\rho\bul H)\circ \rho \simeq & 
\ol{(X\bul \Delta\bul H)}\circ \ol{(X\bul \Delta)}\circ \rho 
\end{eqnarray*}
\end{theorem}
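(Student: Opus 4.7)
The plan is to construct the six morphisms $\rho^1,\ldots,\rho^6\colon X\to X\bul H\bul H\bul H$ as successive pushouts of the two basic operations ``apply $\rho_X$ at a free slot'' and ``apply $\pi'_X\circ(X\bul\Delta)$ to expand an $H$ into $H\bul H$,'' as indicated by the diagrams preceding the statement. The five pushouts listed in the theorem appear as the codomains of these six morphisms, and I will prove the theorem by identifying the six morphisms pairwise under canonical isomorphisms of their codomains, so that the five-way isomorphism of codomains is forced by the simultaneous coincidence of the $\rho^i$.

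The core tool is a \emph{pushout replacement principle}: in an iterated pushout, if one of the intermediate pushout objects is replaced by a canonically isomorphic pushout, then the outer pushout is replaced by a canonically isomorphic one, and any composed map into the outer pushout is sent to the corresponding composed map in the replaced diagram. The two inputs I will feed into this principle are the coassociativity isomorphism $\theta_X\colon X\bul(H\bul H)\to (X\bul H)\bul H$ of the geometric partial comodule $X$, and the analogous coassociativity isomorphism $\theta_{X\bul H}$ for the induced geometric partial comodule structure on $X\bul H$ from \exref{XbulHcomodule}.

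The pairwise identifications then break down as follows. The step $\rho^1\simeq\rho^2$ arises by replacing the inner pushout $(X\bul H)\bul H$ occurring in the construction of $\rho^1$ by $X\bul(H\bul H)$ via $\theta_X$. The step $\rho^2\simeq\rho^3$ is immediate because both codomains coincide with the pushout of one and the same cospan; explicitly, one invokes
\[
((\rho_X\bul H)\ot H)\circ(\pi_X\ot H)\circ (X\ot\Delta) \;=\; (\pi_{X\bul H}\ot H)\circ ((X\bul H)\ot\Delta)\circ(\rho_X\ot H),
\]
and the appropriate analogue at the next position to handle $\rho^4\simeq\rho^5$. The step $\rho^3\simeq\rho^4$ is another application of the pushout replacement principle with $\theta_X$ in a deeper position, and $\rho^5\simeq\rho^6$ is exactly the same at a sibling position. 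Finally, $\rho^4\simeq\rho^6$ invokes the coassociativity isomorphism $\theta_{X\bul H}$ (available precisely because $X\bul H$ is itself a geometric partial comodule) to canonically identify $X\bul(H\bul(H\bul H))$ with $X\bul((H\bul H)\bul H)$.

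The main obstacle is purely bookkeeping: the diagrams grow quickly and at each replacement one must track carefully which universal map identifies which composition. Since the pushout replacement principle is a direct consequence of the universal property of pushouts (both pushouts are initial cocones under the same span), every individual step is routine; compounding them yields the five-way isomorphism of the codomains together with the simultaneous six-way identification of the $\rho^i$, which is exactly the statement of the theorem. It is worth stressing that no new hypothesis beyond $X$ being geometric is used — in particular, the fact that $X\bul H$ inherits a geometric partial comodule structure is what upgrades the bare coassociativity of $X$ into the generalised coassociativity at the $n=3$ level.
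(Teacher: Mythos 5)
Your proposal follows essentially the same route as the paper: the six morphisms $\rho^1,\dots,\rho^6$ are constructed as iterated pushouts, the identifications $\rho^1\simeq\rho^2$, $\rho^3\simeq\rho^4$ and $\rho^5\simeq\rho^6$ come from substituting the pushout $(X\bul H)\bul H$ by $X\bul(H\bul H)$ via $\theta_X$ at the appropriate depth, $\rho^2\simeq\rho^3$ (and its sibling) from recognising that the two cospans being pushed out coincide, and $\rho^4\simeq\rho^6$ from the coassociativity of the induced geometric partial comodule structure on $X\bul H$ from \exref{XbulHcomodule}. The ``pushout replacement principle'' you isolate is exactly the universal-property argument the paper applies repeatedly without naming it, so the two proofs agree step for step.
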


\begin{corollary}\colabel{XbulHcomodule}
If $(X,\pi_X,\rho_X)$ is a geometrically partial $H$-comodule, then $(X\bul H, (X\bul H)\bul H, \pi_{X\bul H},\rho_X\bul H)$ is a geometrically partial $H$-comodule.
\end{corollary}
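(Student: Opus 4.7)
The plan is to verify, in turn, the three conditions in the definition of a geometric partial comodule for the datum $(X\bul H, (X\bul H)\bul H, \pi_{X\bul H}, \rho_X\bul H)$: the counitality axiom [QPC1], the coassociativity axiom [QPC2], and the geometric condition that the induced coassociativity pushout is realized by an isomorphism.

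For counitality, the aim is to show that
\[((X\bul H)\bul \epsilon)\circ (\rho_X\bul H) = \pi_{X\bul H, \epsilon}\circ r_{X\bul H}\]
and that both are isomorphisms $X\bul H \to (X\bul H)\bul k$. Since $\pi_X$ is an epimorphism, it suffices to check the equality after precomposition with $\pi_X$. On the left, the defining pushout equations $(\rho_X\bul H)\circ \pi_X = \pi_{X\bul H}\circ (\rho_X\ot H)$ and $((X\bul H)\bul\epsilon)\circ \pi_{X\bul H} = \pi_{X\bul H,\epsilon}\circ ((X\bul H)\ot \epsilon)$ reduce the expression to $\pi_{X\bul H, \epsilon}\circ (\rho_X\ot k)\circ (X\ot \epsilon)$. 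On the right, the naturality of $r$ rewrites it as $\pi_{X\bul H, \epsilon}\circ (\pi_X\ot k)\circ r_{X\ot H}$. The counitality of $X$ (axiom [QPC1] for $X$) then identifies these two expressions inside the pushout $(X\bul H)\bul k$.

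For coassociativity and the geometric condition, the key tool is the generalized coassociativity theorem~\ref{th:gencoass}. Applied to $X$, it supplies an isomorphism $\Phi: (X\bul H)\bul (H\bul H) \to ((X\bul H)\bul H)\bul H$ between two of the three-fold pushouts appearing there; this isomorphism plays the role of the $\theta$-map for the new partial comodule datum on $X\bul H$. Moreover, the theorem shows that the maps
\[\rho^1 = ((\rho_X\bul H)\bul H)\circ (\rho_X\bul H)\circ \rho_X \quad\text{and}\quad \rho^3 = \overline{((X\bul H)\bul \Delta)}\circ (\rho_X\bul H)\circ \rho_X\]
from $X$ into these pushouts satisfy $\rho^1 = \Phi\circ \rho^3$. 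These are precisely the two sides of the coassociativity axiom for $Y := X\bul H$ with coaction $\rho_Y = \rho_X\bul H$, composed on the right with $\rho_X$. To promote this identity from ``after $\rho_X$'' to ``on all of $Y$'', I would precompose instead with $\pi_X$ (an epimorphism) and use the defining pushout equations of $\rho_X\bul H$, $(X\bul H)\bul \Delta$, and the surrounding pushouts to reduce both sides to expressions already identified by Theorem~\ref{th:gencoass}.

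The main obstacle is precisely this last translation: the equalities in Theorem~\ref{th:gencoass} are phrased as equalities of maps with source $X$, whereas the coassociativity of $X\bul H$ requires equalities of maps with source $X\bul H$. Because $\pi_X$ is an epimorphism and the endofunctors $-\ot H$ preserve the relevant pushouts, the required translation is forced once one carefully matches the three-fold pushouts constructed in the proof of Theorem~\ref{th:gencoass} with the pushouts $(Y\bul H)\bul H$ and $Y\bul (H\bul H)$ occurring in the definition of a partial comodule for $Y$; this verification is routine but diagrammatically intricate.
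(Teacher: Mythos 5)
There is a genuine gap, and it sits exactly at the two places where your argument is least explicit. Consider first your counitality step. The reduction via $\pi_X$ is correct, but the concluding sentence is a non sequitur: after cancelling $\pi_{X\bul H,\epsilon}$ (which is injective, since axiom [QPC1] for $X$ forces $(X\bul H)\bul k\cong (X\bul H)\ot k$), what you need is $\rho_X\circ(X\ot\epsilon)\simeq \pi_X$ as maps $X\ot H\to X\bul H$. Axiom [QPC1] for $X$ only gives $(X\bul\epsilon)\circ\rho_X\simeq id_X$, i.e.\ an identity \emph{after} applying $X\bul\epsilon$; the identity you need holds before applying it and is strictly stronger. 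It already fails when $X$ is a global comodule: there $\pi_X=id_{X\ot H}$, $\rho_X\bul H=\rho_X\ot H$, and the required equation reads $\rho_X(x)\epsilon(h)=x\ot h$. The same phenomenon defeats the coassociativity step you defer as ``routine but diagrammatically intricate''. Precomposing the desired identity of maps out of $X\bul H$ with the epimorphism $\pi_X$ and simplifying with the defining pushout squares does not weaken it: it brings you back to $((\rho_X\bul H)\bul H)\circ(\rho_X\bul H)=((\rho_X\bul H)\bul H)\circ\theta_X\circ\ol{X\bul \Delta}$ on all of $X\bul H$, which is what you set out to prove, and which in the global case is the false equation $(\rho_X\ot H\ot H)\circ(\rho_X\ot H)=(X\ot H\ot\Delta)\circ(\rho_X\ot H)$. \thref{gencoass} only identifies these composites after a \emph{further} precomposition with $\rho_X$, and $\rho_X$ is not an epimorphism; so the caveat in your last paragraph is not a technicality that can be discharged by chasing pushouts.

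The underlying issue is the choice of coaction, and it is worth being aware that the paper offers no proof of this corollary to compare against. The partial comodule structure on $X\bul H$ that \exref{XbulHcomodule} and \thref{gencoass} actually support is the one with coaction $\ol{X\bul\Delta}=\pi'_X\circ(X\bul\Delta)$ into $X\bul(H\bul H)$ (equivalently $\theta_X\circ\ol{X\bul\Delta}$ into $(X\bul H)\bul H$ when $X$ is geometric): this is the quotient of the global comodule $(X\ot H, X\ot\Delta)$ along $\pi_X$, hence geometric directly by \exref{quotient}, with both [QPC1] and [QPC2] inherited from the cofree comodule and no appeal to the generalized coassociativity theorem needed. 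That coaction agrees with $\rho_X\bul H$ only after precomposition with $\rho_X$, which is exactly why the two are easy to conflate. If you want a statement you can actually prove, keep $\pi_{X\bul H}$ (which equals $\theta_X\circ\pi'_{X,\Delta}$) but replace the coaction $\rho_X\bul H$ by $\theta_X\circ\ol{X\bul\Delta}$ and invoke \exref{quotient}; with the coaction as literally written, both axioms fail for $(X\bul H,\rho_X\bul H)$ even when $X$ is global, so no completion of your argument along the proposed lines is possible.
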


\begin{corollary}
All higher coassociativity conditions follow now by an induction argument from the previous two results.
\end{corollary}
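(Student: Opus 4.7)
The strategy is induction on the number $n$ of $H$-factors appearing in the target of the iterated partial coaction. A level-$n$ generalized coassociativity statement asserts two things: (i) every pushout built from $X$ by $n$ successive $\bul$-operations (in any parenthesization, and with $\Delta$ applied to some of the $H$-factors along the way, followed by the relevant $\pi'$-identifications) is canonically isomorphic to every other such pushout; (ii) every morphism from $X$ into such a pushout, built from $\rho_X$, $\Delta$ and identities, coincides with every other such morphism up to these canonical isomorphisms. The cases $n=1,2$ are immediate from the definition and from axioms [QPC1]--[QPC2] of a partial comodule, while \thref{gencoass} is precisely the case $n=3$.

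For the inductive step, assume the statement at level $n \ge 3$. By \coref{XbulHcomodule}, $(X \bul H, \pi_{X \bul H}, \rho_X \bul H)$ is again a geometric partial $H$-comodule, so the inductive hypothesis at level $n$ applies verbatim with $X \bul H$ in place of $X$. The key observation is that every level-$(n+1)$ pushout of $X$ can be presented, up to the canonical isomorphisms afforded by $\theta_X$ and its iterates, as a level-$n$ pushout of $X \bul H$: since $X$ carries no $H$-factor to which $\Delta$ could be applied, the only admissible first $\bul$-operation on $X$ is $\rho_X$, and the remaining $n$ operations may be read as operations on $X \bul H$. Correspondingly, every level-$(n+1)$ morphism $\phi \colon X \to P$ factors as $\phi = \phi' \circ \rho_X$ with $\phi'$ a level-$n$ morphism out of $X \bul H$, and the inductive hypothesis identifies any two such $\phi'$ up to canonical isomorphism; this identification lifts to $X$ by precomposition with $\rho_X$.

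The main obstacle is coherence of the identifications across the various parenthesizations of $\bul$. Since $\bul$ is associative only up to $\theta$, two different ways of viewing a level-$(n+1)$ pushout of $X$ as a level-$n$ pushout of $X \bul H$ could, a priori, yield distinct identifications. This is handled by noting that every such re-presentation is local to three consecutive $\bul$-operations and hence is an instance of \thref{gencoass}, possibly applied at a later stage of the iteration, which is legitimate because \coref{XbulHcomodule} may be applied repeatedly to $X \bul H$, $(X \bul H) \bul H$, and so on. Uniqueness of the resulting isomorphism then follows because the structure morphisms $\pi_X, \pi_{X \bul H}, \pi_{X,\Delta}, \pi'_X, \ldots$ are epimorphisms, so any two candidate identifications must agree after precomposition with the relevant universal cocone and hence must coincide.
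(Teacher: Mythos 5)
Your proposal is correct and follows exactly the route the paper intends: the paper offers no written proof beyond the assertion that the claim follows by induction from \thref{gencoass} and \coref{XbulHcomodule}, and your argument is precisely that induction, using \coref{XbulHcomodule} to transfer the inductive hypothesis to $X\bul H$ and \thref{gencoass} as the local ($n=3$) coherence step. Your explicit treatment of the coherence of the identifications across parenthesizations, resolved by locality and the epimorphy of the structure maps, is a welcome elaboration of a point the paper leaves implicit.
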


\begin{remarks}
\begin{enumerate}
\item
A lax version of the above results on generalized coassociativity can be proven in the same way. Indeed, analysing the reasoning at the start of this section, each of the isomorphisms between the pullbacks obtained in \thref{gencoass} follows from the isomorphism $\theta:X\bul (H\bul H)\to (X\bul H)\bul H$ at appropriate places. When $\theta_X$ is only assumed to be a morphism (not an isomorphism), then we also obtain only morphisms (and not isomorphisms) between the constructed pullbacks. The coassociativity will then hold up to composition with the induced morphisms onto $((X\bul H)\bul H)\bul H$. 
\item 
As the isomorphisms between the respective pullbacks are constructed by applying the universal property of the pullback, one can moreover easily see, that these isomorphisms are compatible in a way that the following diagram commutes
\[
\xymatrix{
X\bul(H\bul (H\bul H)) \ar[r] \ar[d] & (X\bul H)\bul (H\bul H) \ar[r] & ((X\bul H)\bul H)\bul H\\
X\bul ((H\bul H)\bul H) \ar[rr] && (X\bul (H\bul H))\bul H\ar[u]
}
\]
where all arrows are isomorphisms in the geometric case, and just morphisms in the lax case. The commutativity of this diagrams seems to suggest that there is an underlying (skew) monoidal structure with tensor product $-\bul-$. 
In the next section, we will show that in case $H$ is a bialgebra, there is at least a lax monoidal structure on the category of geometric partial modules, which coincides with the $\bul$-product that we encountered so far.
\end{enumerate}
\end{remarks}

\subsection{Completeness and cocompleteness of the category of partial comodules}

For global comodules, the forgetful functor $U:\Mod^H\to \Cc$ allows a right adjoint given by the free functor $-\ot H:\Cc\to \Mod^H$. Since every global comodule is also a partial module, the free functor $-\ot H:\Cc\to \PMod^H$ still makes sense, however it no longer serves as a right adjoint for the forgetful functor $U:\PMod^H\to\Cc$, which is defined as $U(X,\rho_X,\pi_X,\theta_X)=X$ on objects and $U(f,f\bul H)=f$ on morphisms. We now show that the forgetful functor still has a right adjoint. Troughout this section, we suppose that the counit $\epsilon$ of the coalgebra $H$ is an epimorphism in $\Cc$ (this is for example the case if $\Cc=\Vect$ or if $H$ is a bialgebra).

\begin{proposition}
Let $V$ be any object in $\Cc$, then $V$ can be endowed with a partial $H$-comodule structure putting
$V\bul H=V$, $\pi=V\ot\epsilon_H$ and $\rho=id_V$. We call this the ``trivial partial comodule structure'' on $V$.

Moreover a trivial partial comodule is always geometric and the functor $T:\Cc\to \PMod$ that assigns to each $\Cc$-object the trivial partial comodule structure, is fully faithful and a right adjoint for the forgetful functor $U:\PMod^H\to\Cc$.
\end{proposition}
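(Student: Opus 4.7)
The plan is to establish the claim in three steps: validity of the datum and geometricity, functoriality together with full faithfulness of $T$, and finally the adjunction.

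First, $\pi=V\ot\epsilon$ is an epimorphism (since $\epsilon$ is epi by hypothesis and $V\ot-$ preserves epimorphisms in $\Cc$) and $\rho=id_V$ is a morphism, so $(V,V,V\ot\epsilon,id_V)$ is a valid partial comodule datum. To show it is geometric, I would recognize it as an instance of the quotient construction of \exref{quotient} applied to the global $H$-comodule $(V\ot H,V\ot\Delta)$ together with the epimorphism $p=V\ot\epsilon:V\ot H\twoheadrightarrow V$. Using the counit axiom,
\[
(p\ot H)\circ(V\ot\Delta)=V\ot\bigl((\epsilon\ot H)\circ\Delta\bigr)=id_{V\ot H},
\]
so the pushout defining $V\bul H$ in \exref{quotient} is the pushout of $p$ with an identity, which is $V$ itself, with legs $id_V$ and $V\ot\epsilon$. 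These are exactly the structure maps of the trivial partial comodule, so geometricity follows from \exref{quotient}.

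For functoriality and full faithfulness, I would send $f:V\to W$ in $\Cc$ to the pair $(f,f):T(V)\to T(W)$; the two squares in the definition of a morphism of partial comodules commute trivially by naturality of $-\ot\epsilon$. Since $\pi$ is an epimorphism, the second component is forced by \reref{fbulunique}, so the map $\Hom_\Cc(V,W)\to\Hom_{\PMod^H}(T(V),T(W))$ is bijective.

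For the adjunction $U\dashv T$, I propose the natural transformation
\[
\Phi_{X,V}:\Hom_{\PMod^H}(X,T(V))\longrightarrow \Hom_\Cc(X,V),\qquad (f,f\bul H)\mapsto f.
\]
Injectivity is immediate from \reref{fbulunique}. For surjectivity, given $f:X\to V$ I define $f\bul H$ as the composite
\[
X\bul H\xrightarrow{X\bul\epsilon}X\bul k\cong X\xrightarrow{f}V,
\]
where the middle isomorphism is the one provided by axiom [QPC1]. The first compatibility square for $(f,f\bul H)$ then reduces to $(X\bul\epsilon)\circ\rho_X=id_X$, which is precisely [QPC1], and the second square reduces to $(X\bul\epsilon)\circ\pi_X\simeq X\ot\epsilon$, the observation recorded in \reref{ParSweedler}(1). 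The main technical care is in keeping track of the canonical identifications $V\bul H\cong V$, $X\bul k\cong X$, and $V\ot k\cong V$; once these are pinned down every verification (including naturality of $\Phi$ in $X$ and $V$) is a direct application of [QPC1], its consequence in \reref{ParSweedler}(1), and the epimorphism property of $\pi_X$.
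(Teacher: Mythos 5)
Your proof is correct and follows essentially the same route as the paper: the paper also defines the unit of the adjunction as $\eta_X=(id_X,X\bul\epsilon_H)$ (exactly your $f\bul H$ specialised to $f=id$) and observes that the counit is the identity, so full faithfulness and the adjunction come out the same way. The one point where you go beyond the paper is geometricity, which the paper dismisses as ``easily verified'' with $(V\bul H)\bul H=V\bul(H\bul H)=V$; your reduction to \exref{quotient} via the global comodule $(V\ot H,V\ot\Delta)$, the epimorphism $V\ot\epsilon$ and the counit axiom is a clean and valid way to obtain it.
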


\begin{proof}
It can be easily verified that $(V,V,V\ot\epsilon_H,id_V)$ is a geometric partial $H$-comodule with $(V\bul H)\bul H=V\bul(H\bul H)=V$. 

Given a partial comodule $(X,X\bul H,\pi_X,\rho_X)$, we find that $TU(X)=(X,X,X\ot\epsilon_H,id_X)$ and we define the unit of the adjunction as $\eta_X=(id_X,X\bul \epsilon_H):X\to TU(X)$. For any object $V$ in $\Cc$, we see that $UT(V)=V$. Then the unit-counit conditions become trivial. Since the counit is the identity, we obtain that $T$ is fully faithful. 
\end{proof}

Since the forgetful functor has a right, it preserves all colimits that exist in $\PMod^H$. The main aim of this section is to show that colimits and limits indeed exist in $\PMod^H$. Let us first show that thanks to the observation of the previous proposition, the category $\PMod^H$ is well-copowered.

Recall that a category is called {\em well-copowered} if and only if for any object $X$, there exist up-to-isomorphism only a set of epimorphisms $f:X\to Y$. 

\begin{corollary}\colabel{surjepi}
A morphism $f\in \PMod^H$ is an epimorphism if and only if $U(f)=f$ is an epimorphism in $\Cc$.
Furthermore, the category $\PMod^H$ is well-copowered if $\Cc$ is so.
\end{corollary}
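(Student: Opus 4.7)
The plan is to exploit the adjunction $U\dashv T$ from the previous proposition together with the faithfulness of $U$. Recall that \reref{fbulunique} asserts that a morphism in $\PMod^H$ is uniquely determined by its underlying morphism in $\Cc$, which is precisely to say that $U$ is faithful. Moreover, because $T$ is fully faithful, the counit $\epsilon$ of the adjunction satisfies $UT(V)=V$ and $\epsilon_V=\id_V$, so the adjunction bijection $\Hom_{\PMod^H}(Y,T(W))\cong\Hom_\Cc(U(Y),W)$ is simply given by application of $U$, namely $\alpha\mapsto U(\alpha)$.

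For the backward direction, assume $U(f)$ is an epimorphism in $\Cc$. Given morphisms $g,h:Y\to Z$ in $\PMod^H$ with $g\circ f=h\circ f$, applying $U$ yields $U(g)\circ U(f)=U(h)\circ U(f)$, so $U(g)=U(h)$, and faithfulness of $U$ forces $g=h$. For the forward direction, assume $f$ is an epimorphism in $\PMod^H$. Given $g,h:U(Y)\to W$ in $\Cc$ with $g\circ U(f)=h\circ U(f)$, transpose across the adjunction to obtain $\tilde g,\tilde h:Y\to T(W)$ in $\PMod^H$ with $U(\tilde g)=g$ and $U(\tilde h)=h$. Then $U(\tilde g\circ f)=U(\tilde h\circ f)$, and faithfulness of $U$ gives $\tilde g\circ f=\tilde h\circ f$; since $f$ is epi in $\PMod^H$ we conclude $\tilde g=\tilde h$, so $g=U(\tilde g)=U(\tilde h)=h$, as required.

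For well-copoweredness, I would proceed by assigning to each epi $f:X\to Y$ in $\PMod^H$ the iso class of the $\Cc$-epimorphism $U(f):U(X)\to U(Y)$; by the first part, this is well-defined on iso classes, and well-copoweredness of $\Cc$ guarantees that the possible iso classes of $U(Y)$ form a set. To finish, I would bound, for each fixed underlying object $V\in\Cc$, the class of partial $H$-comodule structures on $V$ up to isomorphism in $\PMod^H$: such a structure is specified by an epi-quotient $\pi_V:V\ot H\twoheadrightarrow V\bul H$ in $\Cc$, which ranges over a set by well-copoweredness of $\Cc$ applied to $V\ot H$, together with a morphism $\rho_V:V\to V\bul H$, which lies in a $\Cc$-hom-set. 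The main obstacle to keep track of is that two non-isomorphic partial comodule structures on the same underlying $V$ could appear as codomains of non-isomorphic epis out of $X$; but this only produces at most a set's worth of additional iso classes, since the total data parametrising the structures is already set-sized, so the fibre of $[f]\mapsto[U(f)]$ is a set and the total class of epi-quotients of $X$ in $\PMod^H$ is a set.
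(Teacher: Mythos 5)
Your proof is correct and follows essentially the same route as the paper's: the forward direction is the standard fact that a functor with a right adjoint preserves epimorphisms (which you simply unpack via the adjunction $U\dashv T$), the backward direction uses the faithfulness of $U$ recorded in \reref{fbulunique}, and the well-copoweredness argument bounds the quotients of $X$ in $\PMod^H$ by combining the set of $\Cc$-quotients of $X$ with the set of partial comodule data on each such quotient, exactly as in the paper.
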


\begin{proof}
Since the forgetful functor $U:\PMod^H\to \Cc$ has a right adjoint, $U$ preserves epimorphisms. 
Conversely, if $f:X\to Y$ in $\PMod^H$ is such that $U(f)$ is an epimorphism, then $f$ is an epimorphism as well. Indeed, suppose that we have $g,h:Y\to Z$ in $\PMod^H$ such that $g\circ f=h\circ f$. Then also $U(g)\circ U(f)=U(g)\circ U(f)$ in $\Cc$ and hence $U(g)=U(f)$. But in \reref{fbulunique}, we remarked that for a morphism $f\in\PMod^H$, $f\bul H$ is completely determined by $f$ (or by $U(f)$ to be precise). Hence we find that $g=h$ in $\PMod^H$.

Let $(X,X\bul H,\pi_X,\rho_X)$ be a partial comodule datum. Since $\Cc$ is well-copowered, there exists up-to-isomorphism only a set of epimorphisms $f:X\to Y$ in $\Cc$. Moreover, for each $Y$, there exist again since $\Cc$ is well-copowered, only a set of epimorphisms $Y\ot H\to Y\bul H$, hence also only a set of partial comodule data over $Y$. We conclude that there will be only a set of epimorphisms $f:X\to Y$ in $\PMod^H$ and hence $\PMod^H$ is also well-copowered.
\end{proof}

\begin{theorem}\thlabel{cocomplete}
Suppose that the endofunctor $-\ot H:\Cc\to\Cc$ preserves colimits.
Then the following statements hold.
\begin{enumerate}[(i)]
\item
If the category $\Cc$ is $k$-linear then $\PMod^H$ is also $k$-linear and the forgetful functor is $k$-linear.
\item
If the category $\Cc$ has all colimits of a shape $\Zz$, then $\PMod^H$ also has colimits of shape $\Zz$. 
Hence, if $\Cc$ is cocomplete then $\PMod^H$ is cocomplete and the forgetful functor $U:\PMod^H\to \Cc$ preserves colimits.
\end{enumerate}
Consequently, in case the above conditions hold, $\PMod^H$ is addive.
\end{theorem}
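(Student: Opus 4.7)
The plan is to verify (i) by a direct lift of the $k$-linear structure from $\Cc$, to build colimits for (ii) componentwise in $\Cc$ with the partial-comodule data assembled via naturality, and then to deduce additivity. Concretely, for (i) I would use \reref{fbulunique}: the map $f\bul H$ attached to any morphism $f$ in $\PMod^H$ is uniquely determined by $f$. Given parallel morphisms $f,g:X\to Y$ in $\PMod^H$ with lifts $f\bul H,g\bul H$ and a scalar $\lambda\in k$, I would check that $\lambda f+g$ is again a morphism of partial comodule data, with lift $\lambda(f\bul H)+g\bul H$; both defining squares follow at once by bilinearity of composition in $\Cc$. The $k$-linearity of $U$ is then tautological.

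For (ii), given $D:\Zz\to \PMod^H$ with $i\mapsto (X_i,X_i\bul H,\pi_i,\rho_i)$, I would set $X:=\colim_{\Cc} UD$ with coprojections $\mu_i$, and define $X\bul H:=\colim_{\Cc}(X_i\bul H)$ with coprojections $\nu_i$. Since $-\ot H$ preserves $\Zz$-colimits we have $X\ot H\cong\colim(X_i\ot H)$, and the naturality of $\pi_i$ and $\rho_i$ in $i$ induces the desired structure morphisms $\pi_X:X\ot H\to X\bul H$ and $\rho_X:X\to X\bul H$ via the universal property. A short argument combining the joint epimorphy of the coprojections $\{\nu_i\}$ (valid for any colimit) with the fact that each $\pi_i$ is an epimorphism then shows $\pi_X$ is an epimorphism. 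For the universal property of the candidate colimit in $\PMod^H$, any $\PMod^H$-cocone $(f_i:X_i\to W)$ yields a unique $f:X\to W$ in $\Cc$, and the cocone $(f_i\bul H)$ similarly induces a unique $f\bul H:X\bul H\to W\bul H$; the two compatibility squares for $f$ can then be verified after precomposition with the jointly epimorphic families $\{\mu_i\}$ and $\{\mu_i\ot H\}$.

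The principal obstacle is showing that the new quadruple is a quasi, lax, or geometric partial comodule whenever the $X_i$ are. The crucial observation is that pushouts commute with $\Zz$-colimits (both being colimits), so every pushout appearing in the structure of $X$ is itself the $\Zz$-colimit of the analogous pushout for the $X_i$. In particular the coassociativity pushout $\Theta_X$ equals $\colim_i\Theta_{X_i}$, and axiom [QPC2] for $X$ is obtained by applying $\colim_i$ to [QPC2] for each $X_i$. In the lax and geometric settings, the naturality of $\theta_{X_i}$ in $i$ (a direct consequence of the uniqueness principle of \reref{fbulunique}) lets me set $\theta_X:=\colim\theta_{X_i}$, which is an isomorphism in the geometric case since a colimit of isomorphisms is an isomorphism. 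Axiom [QPC1] is inherited in an identical fashion. Finally, additivity is immediate: $\PMod^H$ is now $k$-linear, admits finite coproducts, and possesses a zero object (the trivial partial comodule on $0\in\Cc$), so its finite coproducts are automatically biproducts.
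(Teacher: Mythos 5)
Your proposal is correct and follows essentially the same route as the paper: lift the $k$-linear structure using the uniqueness of $f\bul H$, build the colimit componentwise in $\Cc$ (for both $X$ and $X\bul H$, using that $-\ot H$ preserves colimits), check that $\pi_X$ is epi via the jointly epimorphic coprojections, and transfer the (co)associativity axioms by the interchange of colimits applied to the defining pushouts, including $\Theta$. The only differences are cosmetic: you spell out the universal property of the colimit in $\PMod^H$ and the construction of $\theta_X$ as a colimit of the $\theta_{X_i}$ slightly more explicitly than the paper does.
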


\begin{proof}
\ul{(i)}
Let $X=(X,X\bul H,\pi_X,\rho_X)$ and $(Y,Y\bul H,\pi_Y,\rho_Y)$ be a two partial comodule data and $(f,f\bul H),(g,g\bul H):X\to Y$ two morphisms. Let us verify that $(f+g,f\bul H+g\bul H)$ is again a morphism.
Then we have
\[\rho_Y\circ (f+g)=\rho_Y\circ f+ \rho_Y\circ g= (f\bul H)\circ \rho_X +(g\bul H)\circ \rho_X=((f\bul H) +(g\bul H))\circ \rho_X\]
And similarly, $((f\bul H)+(g\bul H))\circ \pi_X=\pi_Y\circ ((f\ot H)+(g\ot H)).$
Hence $(f+g,f\bul H+g\bul H)$ is indeed a morphism in $\PMod^H$.

Similarly, for any $a\in k$, we define $a(f,f\bul H)=(af,af\bul H)$. One easily verifies that this is again a morphism, and using this addition and scalar multiplication, the Hom-sets in $\PMod^H$ are $k$-modules and composition is $k$-bilinear.\\
\ul{(ii)}
Let $\Zz$ be any small category and $F:\Zz\to \PMod^H$ a functor, where we denote for each $Z\in \Zz$, $FZ=(FZ,FZ\bul H,\rho_{FZ},\pi_{FZ})$, i.e. we denote $UFZ=FZ$ for short. Consider the functor $UF:\Zz\to \Cc$ and denote $(C,\gamma_Z)=\colim UF$, where $\gamma_Z:FZ\to C$ are such that $\gamma_Z=\gamma_{Z'}\circ Ff$ for any $f:Z\to Z'$ in $\Zz$. Consider now the functor $(UF)^H:\Zz\to\Cc$ given by $(UF)^HZ=FZ\ot H$. Then by assumption we have that $\colim (UF)^H=(C\ot H,\gamma_Z\ot H)$. Finally consider the functor $(UF)^\bul:\Zz\to\Cc$ given by $(UF)^\bul Z=FZ\bul H$ for all $Z\in\Zz$, and denote $\colim (UF)^\bul=(C\bul H,\delta_Z)$ where $\delta_Z:FZ\bul H\to C\bul H$ are such that $\delta_Z=\delta_{Z'}\circ Ff\bul H$. Let us verify that $(C\bul H,\delta_Z\circ \rho_{FZ})$ is a cocone for $UF$. Indeed, for any morphism $f:Z\to Z'$ in $\Zz$, $Ff$ is a morphism in $\PMod^H$ and hence the following diagram commutes
\[
\xymatrix{
FZ \ar[rr]^-{Ff} \ar[d]_{\rho_{FZ}} && FZ' \ar[d]_{\rho_{FZ'}}\\
FZ\bul H \ar[rr]^-{Ff\bul H} \ar[dr]_{\delta_Z} && FZ'\bul H \ar[dl]^{\delta_{Z'}}\\
& C\bul H
}
\]
By the universal property of $\colim UF$, we then obtain a unique morphism $\rho_C:C\to C\bul H$ such that $\delta_Z\circ \rho_{FZ}=\rho_C\circ \gamma_{Z}$ for all $Z\in \Zz$. 
In the same way, one shows that $(C\bul H,\delta_Z\circ \pi_{FZ})$ is a cocone for $UF^H$, and hence there exists a morphism $\pi_C:C\ot H\to C\bul H$ such that $\delta_Z\circ \pi_{FZ}=\pi_C\circ \gamma_{Z}\ot H$ for all $Z\in\Zz$. The situation is summarized in the next diagram.
\[
\xymatrix{
FZ \ar[rr]^-{\gamma_Z} \ar[d]_{\rho_{FZ}} && C \ar[d]^-{\rho_C}\\
FZ\bul H \ar[rr]^-{\delta_Z} && C\bul H\\
FZ\ot H \ar[u]^{\pi_{FZ}} \ar[rr]^{\gamma_Z\ot H} && C\ot H \ar[u]_{\pi_C}
}
\]
Let us show that $(C,C\bul H,\rho_C,\pi_C)$ is a partial comodule datum, i.e. that $\pi_C:C\ot H\to C\bul H$ is an epimorphism in $\Cc$. To this end, consider $f,g:C\bul H\to X$ in $\Cc$ such that $f\circ \pi_C=g\circ \pi_C$. Then for all $Z\in \Zz$ we have that 
\begin{eqnarray*}
f\circ \pi_C\circ (\gamma_Z\ot H) &=& f\circ \delta_Z\circ \pi_{FZ}\\
= g\circ \pi_C\circ (\gamma_Z\ot H) &=& g\circ \delta_Z\circ \pi_{FZ}
\end{eqnarray*}
Since each $\pi_{FZ}$ is epi, we find $f\circ \delta_Z=g\circ \delta_Z$ for all $Z$ and since the $\delta_Z$ are jointly epi, we obtain that $f=g$ and therefore $\pi_C$ is indeed an epimorphism. 

Furthermore, by the interchange law for colimits, it follows that the pushouts $C\bul H(\bul H)$, $(C\bul H)\bul H$ and $\Theta_C$ can be computed as the colimits of the respective functors $\Zz\to\Cc$ that construct the pushouts $Z\bul H(\bul H)$, $(Z\bul H)\bul H$ and $\Theta_Z$. Hence, it follows that if all $FZ$ are quasi, lax or geometric comodules, then $Z$ will be such as well.\\
The last statement follows from the above, since it is well-know that a preadditive category with binary coproducts is additive.
\end{proof}

As we will show further in this section, there exist monomorphisms $f$ in $\PMod^H$ such that $U(f)$ is not a monomorphism in $\Cc$. In particular, $U$ does not have a left adjoint. Nevertheless, we have the following result.

\begin{lemma}
Consider a morphism $f:X\to Y$ in $\PMod^H$. 
If $Uf:UX\to UY$ is a monomorphism in $\Cc$, then $f$ is also a monomorphism in $\PMod^H$.
\end{lemma}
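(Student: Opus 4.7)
The plan is extremely short: it amounts to combining the functoriality of $U$ with \reref{fbulunique}. Let me spell out how I would organize it.

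Suppose $g, h : Z \to X$ are two morphisms in $\PMod^H$ with $f \circ g = f \circ h$. My goal is to show $g = h$ in $\PMod^H$. Since $U$ is a functor, applying it to this equation yields $Uf \circ Ug = Uf \circ Uh$ in $\Cc$. By hypothesis $Uf$ is a monomorphism in $\Cc$, so we may cancel it and conclude $Ug = Uh$.

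Now recall that a morphism in $\PMod^H$ is formally a pair $(g, g\bul H)$, but \reref{fbulunique} tells us that the second component $g \bul H$ is uniquely determined by the first component $g$ (since $\pi_Z$ is an epimorphism). Applying this to both $g$ and $h$, we see that $Ug = Uh$ forces $g \bul H = h \bul H$, so the whole pair-data agree and $g = h$ in $\PMod^H$.

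There is no real obstacle: the only thing to invoke is the uniqueness of $g \bul H$ given $g$, which has already been established. Hence $f$ is a monomorphism in $\PMod^H$. Note that the converse fails in general (as the authors announce), so it would be a mistake to try to strengthen this to an ``if and only if'' — the forgetful functor $U$ is not faithful-reflecting on monos because $\pi_X$ can quotient information that is invisible at the level of underlying objects.
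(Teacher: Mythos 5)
Your proof is correct and follows exactly the same route as the paper's: apply $U$, cancel the monomorphism $Uf$ to get $Ug=Uh$, and then invoke \reref{fbulunique} to conclude that the $\bul H$-components agree as well. Nothing is missing.
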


\begin{proof}
Consider two morphisms $g,h:Z\to X$ in $\PMod^H$ such that $f\circ g=f\circ h$. Since $Uf$ is a monomorphism, we obtain $Ug=Uh$. Then by \reref{fbulunique}, we find that also $g\bul H=h\bul H$, i.e. $g=h$ in $\PMod$.
\end{proof}

\begin{definition}
A {\em subcomodule} of a partial comodule $(X,X\bul H,\rho_X,\pi_X)$ is a partial comodule datum $(Y,Y\bul H,\rho_Y,\pi_Y)$, together with a morphism $f: Y \to X$ for which both $f$ and $f\bul H$ are monomorphisms in $\Cc$.
\end{definition}

From now on, we restrict to our case of interest $\Cc=\Vect_k$ where $k$ is a field.

\begin{proposition}\prlabel{YinX}
Let $(X,X\bul H,\rho_X,\pi_X)$ be a partial comodule datum and $j:Y\to X$ a subobject of $X$ in $\Vect_k$.
Consider the epi-mono factorization of $\pi_X\circ (j\ot H):Y\ot H\to X\bul H$, which we denote as follows:
\[
\xymatrix{
Y\ot H \ar@{->>}[rr]^-{\pi_Y} && Y\bul H \ar@{^(->}[rr]^-{j\bul H} && X\bul H
}
\]
Then 
\begin{enumerate}[(i)]
\item $\ker\pi_Y\cong \im (j\ot H)\cap \ker\pi_X$;
\item 
Denote as usual by $Y\bul (H\bul H)$ the pushout of the pair $(\pi_Y,(\pi_Y\ot H)\circ Y\ot \Delta)$ and by $\pi'_{Y,\Delta}:(Y\bul H)\ot H\to Y\bul (H\bul H)$ associated pushout of the morphism $\pi_Y$. Then $\ker\pi'_{Y,\Delta}\cong \im((j\bul H)\ot H) \cap \ker\pi'_{X,\Delta}$ and therefore $Y\bul (H\bul H)$ is isomorphic to the image of the map $\pi'_{X,\Delta}\circ (j\bul H)\ot H$;
\end{enumerate}
If moreover $Y$ allows a partial comodule datum of the form $(Y,Y\bul H,\rho_Y,\pi_Y)$ such that $j$ is a morphism of partial comodule data, then
\begin{enumerate}[(i)]
\setcounter{enumi}{2}
\item $Y$ is a partial subcomodule of $X$.
\item $(Y\bul H)\bul H$ is isomorphic to the image of the map $\pi_{X\bul H}\circ (j\bul H)\ot H$;
\item if $X$ is a lax (resp. geometric) partial comodule, then $Y$ is as well a lax (resp. geometric) partial comodule.
\end{enumerate}
\end{proposition}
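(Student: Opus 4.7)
The plan is to work in $\Vect_k$ throughout and use \leref{pushoutvect} to realize every pushout in sight as an explicit quotient. I will tackle the five assertions in order. Part~(i) is nearly immediate: since $j$ is a monomorphism in $\Vect_k$, so is $j\ot H$, and then $\ker\pi_Y = \ker(\pi_X\circ(j\ot H)) = (j\ot H)^{-1}(\ker\pi_X)\cong \im(j\ot H)\cap\ker\pi_X$, the last isomorphism being injectivity of $j\ot H$.

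For part~(ii), \leref{pushoutvect} identifies $X\bul(H\bul H)\cong((X\bul H)\ot H)/K_X$ where $K_X=\ker\pi'_{X,\Delta}=(\pi_X\ot H)\circ(X\ot\Delta)(\ker\pi_X)$, and analogously one has $K_Y=\ker\pi'_{Y,\Delta}$. The heart of the argument is to show that along the monomorphism $(j\bul H)\ot H$, the subspace $K_Y$ maps exactly onto $\im((j\bul H)\ot H)\cap K_X$. The inclusion ``$\subseteq$'' follows from naturality of $\Delta$ and part~(i). For ``$\supseteq$'', given $z\in(Y\bul H)\ot H$ with $((j\bul H)\ot H)(z)\in K_X$, I would lift $z$ along the surjection $\pi_Y\ot H$ to some $u\in Y\ot H\ot H$, express its image as $(\pi_X\ot H)\circ(X\ot\Delta)(w)$ with $w\in\ker\pi_X$, and use~(i) to replace $w$ by an element of $\im(j\ot H)$ that pulls back to $\ker\pi_Y$. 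The stated isomorphism $Y\bul(H\bul H)\cong\im(\pi'_{X,\Delta}\circ((j\bul H)\ot H))$ then follows from the first isomorphism theorem.

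Once the compatible partial comodule datum on $Y$ is assumed, part~(iii) is immediate: $j$ is mono by hypothesis and $j\bul H$ is mono by the epi-mono factorization. Part~(iv) is the analogue of~(ii) applied now to the pushout defining $(X\bul H)\bul H$, formed from $\pi_{X\bul H}$ and $\rho_X\ot H$; the same lift-and-replace argument, powered by~(i), yields $(Y\bul H)\bul H\cong\im(\pi_{X\bul H}\circ((j\bul H)\ot H))$.

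Finally, for part~(v), parts~(ii) and~(iv) identify $Y\bul(H\bul H)$ and $(Y\bul H)\bul H$ with subspaces of $X\bul(H\bul H)$ and $(X\bul H)\bul H$ respectively, and these inclusions are compatible with the pushout projections $\pi'_{X,\Delta}$ and $\pi_{X\bul H}$. Hence the restriction of the lax structure morphism $\theta_X$ to $Y\bul(H\bul H)$ lands inside $(Y\bul H)\bul H$, yielding a canonical $\theta_Y$; when $\theta_X$ is an isomorphism, its restriction between the matching subspaces is again an isomorphism, so $Y$ is geometric. The main obstacle I expect is the careful kernel bookkeeping in part~(ii): replacing an arbitrary lift $u\in Y\ot H\ot H$ by one whose image actually lies in $(\pi_X\ot H)(X\ot\Delta)(\im(j\ot H)\cap\ker\pi_X)$ uses the exactness of $\Vect_k$ essentially and cannot be reduced to abstract diagram-chasing.
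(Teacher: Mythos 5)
Parts (i), (iii) and (v) are fine and your overall strategy (explicit kernels in $\Vect_k$ via \leref{pushoutvect}) is the same as the paper's. But the step you yourself flag as ``the main obstacle'' in part (ii) is a genuine gap, and the ingredient that closes it is not exactness of $\Vect_k$: it is the counit axiom [QPC1]. Concretely, you produce $w\in\ker\pi_X$ with $(\pi_X\ot H)\circ(X\ot\Delta)(w)=((j\bul H)\ot H)(z)$ and then want to ``replace $w$ by an element of $\im(j\ot H)$ that pulls back to $\ker\pi_Y$''; no amount of lifting or exactness does this, because nothing a priori forces $w$ into $\im(j\ot H)$, and since the map $\Phi_X:=(\pi_X\ot H)\circ(X\ot\Delta)$ turns out to be injective there is nothing to replace $w$ \emph{with} --- you must show $w$ itself lies in $j(Y)\ot H$. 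The paper's resolution is that, by [QPC1], $(X\bul\epsilon)\ot H$ is a left inverse of $\Phi_X$, so $\Phi_X$ restricts to an isomorphism $\ker\pi_X\cong\ker\pi'_{X,\Delta}$ (and likewise $\Phi_Y$ for $Y$), compatibly with $j\ot H$ and $(j\bul H)\ot H$; hence the square of kernels in (ii) is obtained from the pullback square of (i) by vertical isomorphisms and is itself a pullback. Equivalently, applying the left inverse to your equation gives $w=((X\bul\epsilon)\ot H)\bigl(((j\bul H)\ot H)(z)\bigr)$, and this lies in $j(Y)\ot H$ because $(X\bul\epsilon)\circ(j\bul H)\circ\pi_Y\simeq j\circ(Y\ot\epsilon)$ and $\pi_Y$ is epi.

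The same device is indispensable in (iv), which you also reduce to the unexplained ``lift-and-replace'': there $\ker\pi_{X\bul H}=(\rho_X\ot H)(\ker\pi_X)$, and [QPC1] again makes $(X\bul\epsilon)\ot H$ a left inverse of $\rho_X\ot H$, so the element $\sum_i x_i\ot h_i\in\ker\pi_X$ witnessing $(j(y)\bul h)\ot h'\in\ker\pi_{X\bul H}$ is recovered as $j(y)\ot\epsilon(h)h'\in j(Y)\ot H\cap\ker\pi_X\cong\ker\pi_Y$. Without this left-inverse argument the inclusions $\im((j\bul H)\ot H)\cap\ker\pi'_{X,\Delta}\subseteq((j\bul H)\ot H)(\ker\pi'_{Y,\Delta})$ and the injectivity of $(j\bul H)\bul H$ do not follow, so the proof is incomplete precisely at the point on which (iv) and hence (v) depend. (Note also that this forces you to use the unitality of the datum, which is silently assumed in the paper's own proof even though the statement only asks for a partial comodule datum.)
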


\begin{proof}
\ul{(i)}. By construction we have the following commutative diagram
\[
\xymatrix{
Y\ot H \ar[d]_{\pi_Y} \ar@{^(->}[rr]^{j\ot H} && X\ot H\ar@{->>}[d]^{\pi_X}\\
Y\bul H \ar@{^(->}[rr]^{j\bul H} && X\bul H
}
\]
Since $j\bul H$ is injective and by the commutativity of the diagram, we then find that 
$$\ker\pi_Y=\ker((j\bul H)\circ \pi_Y)=\ker(\pi_X\circ (j\ot H))\cong\im(j\ot H)\cap\ker\pi_X.$$
\ul{(ii)}.
By definition of a partial comodule, we know that $(X\bul \epsilon)\circ \pi_X(\sum x_i\ot h_i)=\sum x_i\epsilon(h_i)$ for all $\sum x_i\ot h_i\in X\ot H$, so in particular also for all $\sum x_i\ot h_i\in Y\ot H$. Hence, it follows that the map $(X\bul\epsilon)\ot H$ is a left inverse for the map $(\pi_X\ot H)\circ (X\ot \Delta)$. Also, remember that the kernel of the map $\pi'_{X,\Delta}$ is given by $(\pi_X\ot H)\circ (X\ot \Delta)(\ker\pi_X)$. Combining these observations, we find that $(\pi_X\ot H)\circ (X\ot \Delta)$ is an isomorphism between $\ker \pi_X$ and $\ker\pi'_{X,\Delta}$. As all maps restrict to $Y$, we find in the same way that $(\pi_Y\ot H)\circ (Y\ot \Delta):Y\ot H\to (Y\bul H)\ot H$ has a left inverse and this map defines an isomorphism between $\ker\pi_Y$ and $\ker\pi'_{Y,\Delta}$. This is summarized in the next diagram.
\[
\xymatrix{
\ker\pi'_{Y,\Delta} \ar@{^(->}[rrr] \ar@{^(->}[ddd] \ar[dr]^\cong &&& \ker\pi'_{X,\Delta} \ar@{^(->}[ddd] \ar[dl]_\cong \\
& \ker\pi_Y \ar@{^(->}[r] \ar@{^(->}[d] & \ker\pi_X \ar@{^(->}[d]\\
& Y\ot H \ar[r] \ar@{^(->}@<.5ex>[dl] & X\ot H \ar@{_(->}@<-.5ex>[dr]\\
(Y\bul H)\ot H \ar@{^(->}[rrr]^{(j\bul H)\ot H} \ar@<.5ex>@{->>}[ur] &&& (X\bul H)\ot H \ar@<-.5ex>@{->>}[ul]
}
\]
As part (i) states exactly that the inner square of this diagram is a pullback, a simple diagram chasing argument shows that the outer square is also a pullback, i.e.\ $\ker\pi'_{Y,\Delta}\cong \im((j\bul H)\ot H) \cap \ker\pi'_{X,\Delta}$. 

From this, it follows immediately that the map $(j\bul H)\bul H$ is injective, which proofs the statement.
\[
\xymatrix{
\ker\pi'_{Y,\Delta} \ar@{^(->}[d]  \ar@{^(->}[rr]^{(j\bul H)\ot H} && (\ker\pi'_{X,\Delta})\ar@{^(->}[d] \\
(Y\bul H)\ot H \ar@{^(->}[rr]^-{(j\bul H)\ot H} \ar@{->>}[d]_{\pi'_{Y,\Delta}} && (X\bul H)\ot H \ar@{->>}[d]^{\pi'_{X,\Delta}}\\
Y\bul (H\bul H) \ar[rr]^{j\bul (H\bul H)} && X\bul (H\bul H) \\
}\]
\ul{(iii)}.
It is clear by construction that $(j,j\bul H)$ is a morphism of partial comodule data and $j\bul H$ is injective.\\
\ul{(iv)}.
This is proven in the same way as in part (ii). We have to show that $(j\bul H)\bul H:(Y\bul H)\bul H\to (X\bul H)\bul H$ is injective. So suppose that $(y\bul h)\bul h'\in (Y\bul H)\bul H$ is such that $(j(y)\bul h)\bul h'=0$ in $(X\bul H)\bul H$. Since $\pi_{Y\bul H}$ is surjective, we find that $(y\bul h)\bul h'=\pi_{Y\bul H}((y\bul h)\ot h')$ and $(j(y)\bul h)\ot h'\in \ker\pi_{X\bul H}=(\rho_X\ot H)(\ker\pi_X)$. Hence, $(j(y)\bul h)\ot h'=(x_{i[0]}\bul x_{i[1]})\ot h_{i}$ for some $x_i\ot h_i\in\ker\pi_X$. Applying $(X\bul \epsilon)\ot H$ to the last identity, we obtain by part (i) that
\[x_i\ot h_i=j(y)\ot \epsilon(h)h'\in j(Y)\ot H\cap \ker\pi_X\cong \ker\pi_Y.\]
Hence $(j(y)\bul h)\ot h'=(x_{i[0]}\bul x_{i[1]})\ot h_{i}\in (j\bul H)\ot H\circ (\rho_Y\ot H) (\ker\pi_Y)\cong \ker\pi_{Y\bul H}$, so $(y\bul h)\bul h'=0$.\\
\ul{(v)}.  Suppose that $X$ is a lax partial module. Then by part (iii) and (iv) above, we can restrict and corestrict $\theta_X$ to obtain a morphism $\theta_Y:Y\bul(H\bul H)\to (Y\bul H)\bul H$. If moreover $X$ is geometric, than we can also restrict and corestrict $\theta^{-1}_X$ to obtain an inverse $\theta_Y^{-1}$ of $\theta_Y$ and $Y$ is again geometric.
\end{proof}

The following corollary describes a phenomenon that was also observed in \cite{ABV2} for the case of partial representations.

\begin{corollary}\colabel{subglobal}
Any partial subcomodule of a global comodule is again global.
\end{corollary}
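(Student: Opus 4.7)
The plan is to apply \prref{YinX} (or more directly, the commutative square coming from the definition of a morphism of partial comodule data) to the inclusion $j\colon Y\to X$. Recall that $X$ being a global comodule means that $\pi_X\colon X\ot H\to X\bul H$ is an isomorphism, and that the defining commutative square
\[\pi_X\circ (j\ot H) \;=\; (j\bul H)\circ \pi_Y\]
expresses a factorization of $\pi_X\circ (j\ot H)$ as $\pi_Y$ followed by $j\bul H$. By hypothesis $\pi_Y$ is an epimorphism and $j\bul H$ is a monomorphism, so this is precisely the epi-mono factorization of $\pi_X\circ (j\ot H)$.

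Next I would use that we are working in $\Cc=\Vect_k$: the endofunctor $-\ot H$ preserves monomorphisms, so $j\ot H$ is a monomorphism, and post-composing with the isomorphism $\pi_X$ shows that $\pi_X\circ(j\ot H)$ is itself a monomorphism. Since the mono part of an epi-mono factorization of a monomorphism is the whole map (up to iso), the epi part $\pi_Y$ must be an isomorphism. Thus $Y\bul H\cong Y\ot H$ under $\pi_Y$, and we can transport $\rho_Y$ to a genuine coaction $\rho'_Y:=\pi_Y^{-1}\circ \rho_Y\colon Y\to Y\ot H$.

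It then remains to observe that the partial comodule axioms [QPC1] and [QPC2] for $(Y,Y\bul H,\rho_Y,\pi_Y)$ translate, along the isomorphism $\pi_Y$, into the classical counitality and coassociativity for $\rho'_Y$. Indeed, once $\pi_Y$ is invertible all of the pushouts appearing in the definitions of $(Y\bul H)\bul H$, $Y\bul (H\bul H)$ and $\Theta_Y$ collapse to the ordinary tensor products $Y\ot H\ot H$, the induced map $\theta_Y$ becomes the identity, and both axioms reduce to the standard comodule identities. Hence $(Y,\rho'_Y)$ is an honest global $H$-comodule, $j$ is a morphism of global comodules, and the corollary follows.

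The only delicate input is the preservation of monomorphisms under $-\ot H$; in $\Vect_k$ this is automatic because tensoring over a field is exact, but it is this step that would need extra hypotheses in a more general monoidal category $\Cc$ and so is the natural point of obstruction.
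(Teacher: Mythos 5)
Your proposal is correct and follows essentially the same route as the paper: both reduce the claim to \prref{YinX}, which identifies $\ker\pi_Y$ with $\im(j\ot H)\cap\ker\pi_X$, so that globality of $X$ (i.e.\ $\ker\pi_X=0$) forces $\ker\pi_Y=0$ and hence $\pi_Y$ is an isomorphism. Your reformulation via the epi-mono factorization of the monomorphism $\pi_X\circ(j\ot H)$ is just a repackaging of that kernel computation, and the extra paragraph checking that the axioms collapse to the classical comodule identities is detail the paper leaves implicit.
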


\begin{proof}
By \prref{YinX}, we know that for partial subcomodule $Y$ of partial comodule $X$ that $\ker\pi_Y\subset \ker\pi_X$. Moreover, if $X$ is global then $\ker\pi_X=0$ and therefore also $\ker\pi_Y=0$ so $Y$ is global.
\end{proof}

We are now ready to prove the `fundamental theorem for partial comodules'.

\begin{theorem}[Fundamental theorem for partial comodules]
Let $X=(X,X\bul H,\rho_X,\pi_X)$ be a quasi partial comodule over the $k$-coalgebra $H$, and consider any $x\in X$. Then there exists a finite dimensional (quasi) partial subcomodule $Y\subset X$ such that $x\in Y$.

Consequently, if $X$ is moreover lax (resp. geometric), then $Y$ is as well lax (resp. geometric).
\end{theorem}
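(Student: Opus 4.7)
The plan is to imitate the classical fundamental theorem for comodules --- namely that every element of a global $H$-comodule lies in a finite-dimensional subcomodule --- while controlling the new features introduced by the partial coaction. I would start from $x\in X$ and pick any lift $\tilde\rho(x)=\sum_{i=1}^n x_i\ot h_i\in X\ot H$ of $\rho_X(x)\in X\bul H$ with $\{h_i\}$ linearly independent in $H$. By the classical fundamental theorem for coalgebras, the $h_i$ lie inside a finite-dimensional subcoalgebra $C\subset H$, and one extends $\{h_i\}$ to a basis of $C$. Then $\tilde\rho(x)\in X\ot C$ and $X\ot C$ is a global $C$-comodule via $X\ot\Delta|_C$, so the classical fundamental theorem yields a finite-dimensional $C$-subcomodule $W\subset X\ot C$ with $\tilde\rho(x)\in W$.

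Next I would define $Y\subset X$ as the $X$-support of $W$: the smallest subspace such that $W\subset Y\ot C$. Then $\dim Y<\infty$, and $x\in Y$ follows from [QPC1] applied to $\tilde\rho(x)$, since $x=(X\ot\epsilon)(\tilde\rho(x))$ is a linear combination of the $X$-components of $\tilde\rho(x)\in W$. Moreover $\rho_X(x)\in\pi_X(W)\subset \pi_X(Y\ot H)$ by construction. To finish, one must verify the partial-closedness condition $\rho_X(Y)\subset \pi_X(Y\ot H)$ so that \prref{YinX}(iii) applies and promotes $Y$ to a quasi partial subcomodule; the inheritance of lax/geometric structure is then immediate from \prref{YinX}(v).

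The main obstacle is precisely the partial-closedness step. In the classical proof one uses the exact identity $(\rho\ot H)\circ\rho=(X\ot\Delta)\circ\rho$ in $M\ot H\ot H$ and extracts the coefficient of each basis element of $C$ on the second tensor factor to conclude $\rho(x_j)\in V\ot H$. In our setting [QPC2] only asserts this identity in the coassociativity pushout $\Theta=((X\bul H)\ot H)/N$, where $N=(\rho_X\ot H)(\ker\pi_X)+(\pi_X\ot H)(X\ot\Delta)(\ker\pi_X)$. I would overcome this by rewriting the coassociativity relation for $y\in Y$ as
\[
(\rho_X\ot H)(\tilde\rho(y)-a)=(\pi_X\ot H)(X\ot\Delta)(\tilde\rho(y)+b)\quad\text{in }(X\bul H)\ot H
\]
for suitable $a,b\in\ker\pi_X$, thereby trading coassociativity-in-$\Theta$ for an honest equality between modified lifts of $\rho_X(y)$. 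Extracting coefficients of basis elements of $C$ then yields an expression of $\rho_X(y)$ as $\pi_X$ of an element whose $X$-support lies in $Y$ plus the $X$-components of $a$ and $b$. The hard technical point is to show that the correcting elements $a,b$ can always be chosen inside $X\ot C$ --- equivalently, that $N\cap((X\bul H)\ot C)$ is controlled by $\ker\pi_X\cap(X\ot C)$ --- so that the enlargement stays inside the finite-dimensional ambient space $X\ot C$ and the iteration terminates.
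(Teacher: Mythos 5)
Your overall strategy---trade the coassociativity identity, which only holds in the pushout $\Theta=((X\bul H)\ot H)/(\ker\pi_{X\bul H}+\ker\pi'_{X,\Delta})$, for an honest equality in $(X\bul H)\ot H$ by correcting lifts with elements of $\ker\pi_X$, and then take the $X$-support of the corrected data---is exactly the right one and is the strategy of the paper. But the proposal stops precisely at the point where the work has to be done: you flag as ``the hard technical point'' that the correction terms must be controllable (kept inside $X\ot C$, with a terminating iteration), and you give no argument for this. As set up, the iteration has no visible termination: each enlargement of $Y$ could require new correction terms whose $X$-components lie outside the previous span, and nothing in the structure of $N=(\rho_X\ot H)(\ker\pi_X)+(\pi_X\ot H)(X\ot\Delta)(\ker\pi_X)$ confines these to $X\ot C$. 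So there is a genuine gap.

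The paper closes this gap with one observation that makes the whole iteration unnecessary. Writing $\rho_X(x)=\pi_X(\sum y_i\ot h_i)$ with $\{h_i\}$ a basis of $H$ and $\Delta(h_i)=\sum a^i_{jk}\,h_j\ot h_k$, the coassociativity in $\Theta$ produces two correction terms $\sum z_i\ot h_i$ and $\sum z'_i\ot h_i$ in $\ker\pi_X$, one for each leg of the pushout, such that
\[
\textstyle\sum \rho_X(y_i+z_i)\ot h_i \;=\; \sum\pi_X\bigl((y_k+z'_k)\ot a^k_{ji}h_j\bigr)\ot h_i \quad\text{in } (X\bul H)\ot H.
\]
Applying $(X\bul\epsilon)\ot H$ to this identity (using [QPC1]) forces $\sum z_i\ot h_i=\sum z'_i\ot h_i$, i.e.\ the \emph{same} correction works on both sides. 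Since $\sum z_i\ot h_i\in\ker\pi_X$, the element $\sum(y_i+z_i)\ot h_i$ is still a lift of $\rho_X(x)$, and by linear independence of the $h_i$ one reads off the exact identity $\rho_X(y_i+z_i)=\pi_X(\sum_{j,k}(y_k+z_k)\ot a^k_{ji}h_j)$. Hence $Y=\span\{y_i+z_i\}$ (a finite set) already satisfies $x\in Y$ and $\rho_X(Y)\subset\pi_X(Y\ot H)$ in a single step, and \prref{YinX} finishes the proof. You should incorporate this counit argument; it replaces your iteration entirely. As a side remark, the detour through the fundamental theorem for coalgebras and the global $C$-comodule $W\subset X\ot C$ buys you nothing here: a lift $\tilde\rho(x)$ already has finite $X$-support, and the correction terms---not the initial lift---are what determine the right $Y$.
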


\begin{proof}
Take $x\in X$ and write $\rho_X(x)=\sum_i y_i\bul h_i=\pi_X(\sum y_i\ot h_i)$, where $h_i$ is a base of $H$. We know that in the tensor $\sum_i y_i\bul h_i$ only a finite number of the elements $y_i$ are non-zero.

By coassociativity in the partial comodule $X$, the identity
\begin{equation}\eqlabel{fund_coass}
\theta_1\circ (\rho_X\bul H)(\rho_X(x))=\theta_2\circ \pi'_{X}\circ (X\bul \Delta)(\rho_X(x))
\end{equation}
holds in the coassociativity pushout $\Theta$. 
Write as before $\rho_X(x)=\pi_X(\sum y_i\ot h_i)$,
and let us denote $\Delta(h_i)=\sum a^i_{jk}h_j\ot h_k$ for certain $a_{jk}^i\in k$.
Then we can rewrite the right hand side of \equref{fund_coass} as
\begin{eqnarray*}
\theta_2\circ \pi'_{X}\circ (X\bul \Delta)(\rho_X(x)) &=& 
\theta_2\circ \pi'_{X}\circ (X\bul \Delta)(\pi_X(\sum y_i\ot h_i))\\ &=&
\theta_2\circ \pi'_{X,\Delta}(\sum \pi_X(y_i\ot h_{i(1)})\ot h_{i(2)})\\
&=& \theta_2\circ \pi'_{X,\Delta}(\sum\pi_X(y_k\ot a^k_{ji}h_j) \ot h_i).
\end{eqnarray*}
On the other hand, the left hand side of \equref{fund_coass} can be rewritten as
\begin{eqnarray*}
\theta_1\circ  (\rho_X\bul H)(\rho_X(x))&=& \theta_1\circ  (\rho_X\bul H)(\pi_X(\sum y_i\ot h_i))\\
&=& \theta_1\circ  \pi_{X\bul H} (\sum \rho(y_i)\ot h_i).
\end{eqnarray*}

Furthermore, by definition of the coassociativity pushout, we have the identity $\theta_1\circ \pi_{X\bul H}=\theta_2\circ \pi'_{X,\Delta}: (X\bul H)\ot H \to \Theta$. Moreover, these maps are surjective, and their kernel is given by $\ker\pi_{X\bul H}+\ker\pi'_{X,\Delta}$. From \leref{pushoutvect} it follows that $\ker\pi_{X\bul H}=(\rho\ot H)(\ker\pi_X)$ and $\ker\pi'_{X,\Delta}=(\pi_X\ot H)\circ (X\ot \Delta)(\ker\pi_X)$. Hence we know that there exist elements $\sum z_i\ot h_i, \sum z'_i\ot h_i\in \ker\pi_X$ such that
$$ \sum \rho(y_i)\ot h_i + \sum \rho(z_i)\ot h_i = 
\sum\pi_X(y_k\ot a^k_{ji}h_j) \ot h_i+ \sum\pi_X(z'_k\ot a^k_{ji}h_j) \ot h_i$$
in $(X\bul H)\ot H$.
When we apply now $(X\bul \epsilon)\ot H$ to this identity, we obtain that
$$\sum y_i\ot h_i + \sum z_i\ot h_i = \sum y_i\ot h_i + \sum z'_i\ot h_i$$
from which it follows that $\sum z_i\ot h_i=\sum z'_i\ot h_i$. 
Since $\sum z_i\ot h_i\in \ker \pi_X$, we still have that $\rho(x)=\pi_X((y_i+z_i)\ot h_i)$.
Combining the above, we find that
$$ \sum \rho(y_i+z_i)\ot h_i = 
\sum\pi_X((y_k+z_k)\ot a^k_{ji}h_j) \ot h_i$$
and using the linear independence of the $h_i$'s, we obtain that 
$$\rho(y_i+z_i)=\pi_X((y_k+z_k)\ot a^k_{ji}h_j).$$
Hence, when we define $Y$ as the subspace of $X$ generated by the (finite number of non-zero) elements $y_i+z_i$, we see that $x\in Y$ and $\rho(Y)\subset \pi_X(Y\ot H)$. 
As in \prref{YinX}, define $Y\bul H$ and $\pi_Y:Y\ot H\to Y\bul H$ by means of the epi-mono factorisation of the map $\pi_X$ restricted to $Y$, then by construction we have the following commutative diagram
\[
\xymatrix{
Y\ot H \ar@{^(->}[rr] \ar@{->>}[d]^{\pi_Y} && X\ot H \ar@{->>}[d]^{\pi_X}\\
Y\bul H \ar@{^(->}[rr] && X\bul H
}
\]
and moreover, the above shows that $(Y,Y\bul H,\pi_Y,\rho_Y)$, where $\rho_Y$ is the restriction of $\rho_X$ to $Y$, is a partial comodule datum and the inclusion map $j:Y\to X$ is a morphism of partial comodules. Then, by \prref{YinX}, we know that $Y$ is a partial subcomodule of $X$. The  last statement follows from the fact that any quasi partial subcomodule of a lax (resp. geometric) partial comodule is itself lax (resp. geometric) by \prref{YinX}.
\end{proof}

\begin{corollary}
The category of geometric partial comodules has a generator.
\end{corollary}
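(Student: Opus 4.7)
The plan is to combine the fundamental theorem just proven with the cocompleteness of $\gPMod^H$ (\thref{cocomplete}) in the standard way: assemble a set of representatives of all finite dimensional geometric partial comodules, and show their coproduct generates.

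First I would verify that, up to isomorphism, there is only a \emph{set} $\{G_i\}_{i\in I}$ of finite dimensional geometric partial $H$-comodules. Fixing a finite dimensional vector space $V$, a partial comodule datum on $V$ consists of a quotient $\pi:V\ot H\tto V\bul H$ (and quotients of $V\ot H$ form a set, being indexed by subspaces of $V\ot H$), together with a morphism $\rho:V\to V\bul H$ (a set for each choice of $V\bul H$). The axioms [QPC1], [QPC2] and the condition that $\theta$ be an isomorphism then carve out a subset. Running over a set of representatives of finite dimensional vector spaces, we obtain a set $\{G_i\}_{i\in I}$ of isomorphism class representatives.

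Next I would set $G=\bigoplus_{i\in I}G_i$, which exists in $\gPMod^H$ by \thref{cocomplete}. To show $G$ is a generator, take any two distinct morphisms $f,g:X\to Y$ in $\gPMod^H$. By \coref{surjepi} and \reref{fbulunique}, $f\ne g$ forces $Uf\ne Ug$ in $\Vect$, so there is $x\in X$ with $f(x)\ne g(x)$. By the fundamental theorem, $x$ lies in a finite dimensional geometric partial subcomodule $j:Y_0\hookrightarrow X$. Choose $i_0\in I$ with $Y_0\cong G_{i_0}$ and, using the additive structure of $\gPMod^H$ (\thref{cocomplete}(i)) together with the universal property of the coproduct, define $h:G\to X$ to be $j$ composed with the chosen isomorphism on the $i_0$-summand and zero on every other summand. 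Precomposing with the canonical injection $\iota_{i_0}:G_{i_0}\to G$, the equation $f\circ h=g\circ h$ would yield $f|_{Y_0}=g|_{Y_0}$, contradicting $f(x)\ne g(x)$. Hence $f\circ h\ne g\circ h$, proving $G$ is a generator.

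The only delicate point is the set-theoretic bookkeeping in the first step, since $H$ may be infinite dimensional, but this is controlled by the fact that all quotients of $V\ot H$ are parametrized by subspaces and hence form a set. Everything else is a direct consequence of results already established in this section.
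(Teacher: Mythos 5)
Your proposal is correct and follows essentially the same route as the paper: take a set of isomorphism-class representatives of finite dimensional geometric partial comodules (the set-theoretic point being handled exactly as you do, via quotients of $V\ot H$), form their coproduct using \thref{cocomplete}, and invoke the fundamental theorem. The only cosmetic difference is that you verify the separating-morphisms property of a generator directly, whereas the paper phrases it as the existence of a surjection from $G$ onto any geometric partial comodule; both amount to the same argument.
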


\begin{proof}
Let $I$ be the set of isomorphism classes of finite dimensional geometric partial comodules over $H$. Since there exists clearly only a set of partial comodule structures over a given finite dimensional vector space, it follows that $I$ is indeed a set.
For any $i\in I$ choose one comodule $M_i$ and denote by $G$ the coproduct $\coprod_{i\in I}M_i$. Then the fundamental theorem implies there is a surjective morphism $G\to X$ for any geometric partial comodule. Hence, $G$ is a generator for $\gPMod^H$.
\end{proof}

\begin{corollary}
The category of geometric partial comodules is complete and cocomplete.
\end{corollary}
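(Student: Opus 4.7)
The plan is to deduce the statement by combining the three structural results just established with a classical theorem from category theory, rather than trying to construct limits and colimits by hand.

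Cocompleteness is immediate: applying \thref{cocomplete} with $\Cc=\Vect_k$ (which is cocomplete, and in which the endofunctor $-\ot H$ preserves all colimits, being a left adjoint) yields that $\gPMod^H$ has all small colimits, and the forgetful functor $U:\gPMod^H\to\Vect_k$ preserves them.

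For completeness, I would invoke the standard fact that a locally small, cocomplete, well-copowered category which admits a (small) generator is automatically complete. This is a classical consequence of the Special Adjoint Functor Theorem: the generator together with the bound coming from well-copoweredness can be used to produce a cogenerator, after which arbitrary limits are realized as values of right adjoints to colimit-preserving functors. All three hypotheses have already been verified for $\gPMod^H$: cocompleteness by the previous paragraph, well-copoweredness by \coref{surjepi}, and the existence of a (small) generator by the preceding corollary. This suffices for the claim.

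Should one insist on a self-contained construction, the main obstacle would be the explicit description of arbitrary small products, since the naive vector-space product of an infinite family of geometric partial comodules need not itself carry a geometric partial comodule structure. The natural strategy would be to equip $\prod_i X_i$ with the obvious candidate partial comodule datum and then pass to its largest geometric quotient, in the spirit of the construction used for the tensor product of geometric partial comodules mentioned in the introduction (and to be developed in \seref{bialgebra}). Equalizers are considerably easier: using \prref{YinX}, the set-theoretic equalizer of two parallel morphisms inherits from the source a partial subcomodule structure whose geometricity is guaranteed by the last assertion of \prref{YinX}. Together with the already-constructed small products this would yield completeness directly, but invoking the categorical theorem above is the most economical route.
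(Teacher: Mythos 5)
Your argument is correct and coincides with the paper's own proof: the paper likewise deduces completeness from the classical fact that a cocomplete, well-copowered category with a generator is complete, with the three hypotheses supplied by \thref{cocomplete}, \coref{surjepi}, and the generator corollary. The additional sketch of a hands-on construction is not needed (and the paper later cautions that limits are \emph{not} preserved by the forgetful functor, so the ``largest geometric quotient/subcomodule'' subtlety you flag is real), but your main route is exactly the intended one.
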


\begin{proof}
This follows from the known fact that a cocomplete well-copowered category with a generator is complete.
\end{proof}

\begin{remark}
Although the category $\PMod^H$ is complete, its limits are not preserved by the forgetful functor $U$ to $\Vect$. 
More precisely, if $L$ is a limit of a diagram $D$ in $\PMod^H$, then it is clear that $U(L)$ is a cone for the diagram $U(D)$ in $\Vect$. Hence there is a morphism $u:U(L)\to L'$ in $\Vect$ where $L'$ is the limit in $\Vect$ of $U(D)$. In general however, this morphism $u$ is not a bijection. Rather, $L$ can be understood as the biggest partial comodule inside $L'$ that allows a cone on $D$. Remark however, that in order to be able to speak about the `biggest' partial comodule inside $L'$, we already use implicitly the existence of limits in $\PMod^H$. 
This can be seen more explicitly by considering the kernel of a morphism $f:X\to Y$ in $\PMod^H$ which can be understood as the biggest partial subcomodule $K$ of $X$ such that $U(K)$ is contained in the vector space kernel of $f$. Thanks to the completeness and cocompleteness of $\PMod^H$, we can construct from two partial subcomodules $v:V\to X$ and $w:W\to X$ the pushout of the pullback of $v$ and $w$, which is then a partial subcomodule of $X$ containing both $V$ and $W$.
\end{remark}

The following result will be important in the next section.

\begin{corollary}
The forgetful functor $\gPMod^H\to \PCD$ ($\PCD$ denotes the category of partial comodule data), is fully faithful and has a left adjoint $B$.
\end{corollary}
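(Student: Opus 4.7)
The functor $U$ is \emph{fully faithful} essentially by construction: a morphism between two geometric partial comodules is, by definition, precisely a morphism of the underlying partial comodule data, so $U$ is the inclusion of a full subcategory. The ``geometric'' condition is a property of objects, not additional structure, so nothing has to be checked on arrows.

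To produce the left adjoint $B$, I will realise $\gPMod^H$ as a \emph{reflective} subcategory of $\PCD$. Given $X=(X,X\bul H,\pi_X,\rho_X)\in\PCD$, form the comma category $\Jj_X$ whose objects are morphisms $q:X\to Y$ in $\PCD$ such that $Y\in\gPMod^H$ and $U(q)$ is an epimorphism in $\Cc$, with morphisms the commuting triangles in $\gPMod^H$ under $X$. Since $\Cc=\Vect_k$ is well-copowered and, for every fixed vector space, there is only a set of possible partial comodule data over it (the argument of \coref{surjepi} adapted to $\PCD$), the category $\Jj_X$ is essentially small. Using that $\gPMod^H$ is cocomplete by the previous corollary, I define $B(X)$ to be the colimit in $\gPMod^H$ of the forgetful diagram $\Jj_X\to\gPMod^H$, $(q:X\to Y)\mapsto Y$. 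The universal cocone then produces a canonical morphism $\eta_X:X\to B(X)$ in $\PCD$.

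For the universal property, take an arbitrary morphism $f:X\to Z$ in $\PCD$ with $Z\in\gPMod^H$. Factor $U(f)$ in $\Cc$ as $U(X)\tto W \stackrel{j}{\hookrightarrow} U(Z)$. Equip $W$ with the partial subcomodule structure coming from $Z$ as in \prref{YinX}; by part (v) of that proposition, $W$ inherits a geometric partial comodule structure. The second component $p\bul H:X\bul H\to W\bul H$ needed to make $p:X\to W$ into a morphism in $\PCD$ is obtained by corestricting $f\bul H:X\bul H\to Z\bul H$: since $W\bul H$ is defined as the image of $\pi_Z\circ(j\ot H)$ in $Z\bul H$, the map $f\bul H$ factors through $W\bul H$ because $f\bul H\circ\pi_X=\pi_Z\circ(f\ot H)=\pi_Z\circ(j\ot H)\circ(p\ot H)$. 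Hence $(p:X\to W)\in\Jj_X$, and post-composing with the mono $W\hookrightarrow Z$ yields a factorization $B(X)\to Z$ through $\eta_X$. For uniqueness, I observe that $\eta_X$ is itself an epimorphism in $\PCD$: applying the image construction to $\eta_X$ produces a geometric partial subcomodule $C\subseteq B(X)$ such that $X\tto C$ lies in $\Jj_X$, and the universality of $B(X)$ as a colimit over $\Jj_X$ then forces $C=B(X)$.

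The hard part will be verifying in detail that the image $W$ in the universal-property argument really carries \emph{one} coherent partial comodule structure: both the ``subobject'' structure inherited from $Z$ via \prref{YinX} (which makes $j$ a morphism in $\PCD$) and the implicit ``quotient'' structure coming from $X\tto W$ via \exref{quotientpartial} (which makes $p$ a morphism in $\PCD$) must agree. The key observation is that $W\bul H$ is uniquely characterized as the image of $\pi_Z\circ(j\ot H)$ together with the corestrictions of $\rho_Z$ and of $f\bul H$; once this identification is made, everything fits together and the adjunction isomorphism $\Hom_{\gPMod^H}(B(X),Z)\cong\Hom_{\PCD}(X,U(Z))$ follows from the two properties of $\eta_X$ (factorization + epimorphism) established above.
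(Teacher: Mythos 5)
Your fully-faithfulness argument is fine (being geometric is a property of a partial comodule datum, not extra structure, so $\gPMod^H$ really is a full subcategory of $\PCD$), and your use of the image factorization via \prref{YinX} and \exref{quotientpartial} to show that every morphism from $X$ to a geometric partial comodule factors through a geometric \emph{epi-quotient} of $X$ is exactly the right ingredient. The genuine gap is in the construction of $B(X)$: you take the \emph{colimit} of the diagram $\Jj_X\to\gPMod^H$ of geometric epi-quotients of $X$, and this computes the wrong universal object --- in fact it collapses entirely. The zero comodule with the zero map $X\tto 0$ is an object of $\Jj_X$, and it is \emph{terminal} there (every $Y$ admits a unique map to $0$, and it automatically commutes with the quotient maps); the colimit of a diagram indexed by a category with a terminal object is its value at that terminal object, so your $B(X)$ is $0$ for every $X$, and $\Hom_{\gPMod^H}(0,Z)\neq\Hom_{\PCD}(X,Z)$ in general. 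Even setting the zero object aside, a colimit of a connected diagram of quotients of $X$ glues them along pushouts and therefore produces a \emph{smallest} common quotient, whereas the reflection must be the \emph{largest} geometric quotient, the one through which all the others factor. The same variance error surfaces in your verification of the universal property: the colimit cocone hands you legs $W\to B(X)$, which point the wrong way to assemble the required map $B(X)\to Z$ from $W\hookrightarrow Z$.

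What is needed instead is the join of all geometric quotients of $X$ in the poset of quotients of $X$ --- concretely, over $\Vect_k$, the quotient $X/K_0$ where $K_0$ is the intersection of all subspaces $K\subseteq X$ such that $X/K$, with the induced datum of \exref{quotientpartial}, is geometric --- together with a verification that this join is itself geometric. This is what the paper's (very terse) proof appeals to: the ``biggest'' comodule construction of the Remark preceding the corollary, in which two candidates are combined by forming the pullback of their pushout; the resulting universal property of $B$ is spelled out again in the Remark at the end of \seref{bialg}, where $M\bul N$ is described as the biggest geometric quotient of $M\ot N$. If you replace your colimit by this limit-type join (and check that it is again geometric, e.g.\ by an argument in the spirit of \prref{YinX}), then your image-factorization argument for the factorization of an arbitrary $f:X\to Z$ and the epimorphy of $\eta_X$ go through essentially as you wrote them.
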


\begin{proof}
Let $X$ be a partial comodule datum. Then $BX$ is the biggest partial subcomodule of $X$ which is geometric. As we explained in the previous remark this construction makes sense. It is easily verified that this provides a left adjoint to the forgetful functor.
\end{proof}

In the remaining part of this section, we will show that the category of Partial modules is not abelian. To this end, we will construct an example of a morphism $f$ such that $\ker\coker f$ and $\coker \ker f$ are not isomorphic.

Consider a global comodule $X$ and $Y$ a linear subspace of $X$ which is not a (global) subcomodule (recall that by \coref{subglobal} any subcomodule of global module is global). We can then construct the induced partial comodule $X/Y$ as in \exref{quotient} and consider the canonical projection $p:X\to X/Y$ which is a morphism of partial comodules. Then the vector space kernel of $p$ is just $Y$. However as we assumed that $Y$ was not a subcomodule of $X$, $Y$ is also not a partial subcomodule of $X$ and hence it can not be the kernel of $p$ in $\PMod^H$. Rather, this kernel is the biggest (global) subcomodule of $X$ contained in $Y$. Suppose that $Y$ was a one-dimensional subspace of $X$, then it follows that the kernel of $p$ has to be $0$. Then $p$ is both a monomorphism (as morphisms with a zero kernel in additive categories are monomorphisms) and an epimorphism (as $p$ is surjective and \coref{surjepi}) but not an isomorphism. Hence $\PMod^H$ is not abelian. We also see as mentioned earlier that there exist monomorphisms in $PMod^H$ whose underlying map in $\Vect$ is not injective.

\section{Partial comodules over a bialgebra}\selabel{bialgebra}

\subsection{Lax monoidal categories}

A category $\Cc$ is called {\em lax monoidal} \cite{Leinster} if
\begin{itemize}
\item  for each $n\in\NN$ there exists an $n$-fold tensor functor 
\[\ot_n:\underbrace{\Cc\times \cdots\times \Cc}_n\to \Cc;\]
\item for each for each $(k_1,\ldots,k_n)\in\NN^n$, there exists a natural transformation 
\[\gamma^{k_1,\ldots,k_n}:\ot_n\circ (\ot_{k_1}\times \ldots\times \ot_{k_n})\to \ot_{k_1+\ldots +k_n}\]
\item there exists natural transformation \[\iota:id_\Cc\to \ot_1,\]
\end{itemize}
that satisfy the following associativity and unitality conditions.
\[
\xy
(40,0)*{\ot_n\circ (\ot_{k_1}\times \ldots\times \ot_{k_n})\circ ((\ot_{\ell_{11}}\times \ldots \times\ot_{\ell_{1k_1}})\times \ldots \times (\ot_{\ell_{n1}}\times \ldots \times \ot_{\ell_{nk_n}})}="1";
(0,-15)*{\ot_{k_1+ \ldots +k_n}\circ ((\ot_{\ell_{11}}\times \ldots \times \ot_{\ell_{1k_1}})\times \ldots \times (\ot_{\ell_{n1}}\times \ldots \times \ot_{\ell_{nk_n}})}="2"; 
(80,-25)*{\ot_n \circ (\ot_{\ell_{11}+\ldots +\ell_{1k_1}}\times\ldots \times \ot_{\ell_{n1}+\ldots +\ell{nk_n}})} ="3";
(40,-40)*{\ot_{\ell_{11}+\ldots +\ell_{1k_1}+\ldots +\ell_{n1}+\ldots +\ell{nk_n}}}="4";
{\ar_{\gamma^{k_1, \ldots ,k_n} * id} "1";"2"}
{\ar^{\qquad id * (\gamma^{\ell_{11},\ldots ,\ell_{1k_1}}\times\ldots \times \gamma^{\ell_{n1},\ldots ,\ell_{nk_n}})} "1";"3"}
{\ar_{\gamma^{\ell_{11},\ldots ,\ell_{1k_1},\ldots,\ell_{n1},\ldots ,\ell_{nk_n}} } "2";"4"}
{\ar^{\qquad\gamma^{\ell_{11}+\ldots +\ell_{1k_1},\ldots,\ell_{n1}+\ldots +\ell_{nk_n}}} "3";"4"}
\endxy
\]
\[
\xymatrix{
\ot_n \ar[rr]^-{id_{\ot_n}*(\iota,\ldots,\iota)}\ar@{=}[drr] && \ot_n\circ (\ot_1,\ldots,\ot_1) \ar[d]^{\gamma^{1,\ldots,1}}\\
&& \ot_n
}\qquad\xymatrix{
\ot_n \ar[rr]^-{\iota*id_{\iota_n}}\ar@{=}[drr] && \ot_1\circ \ot_n \ar[d]^{\gamma^n}\\
&& \ot_n
}
\]
Remark that the last two conditions imply in particular that the functor $\ot_1$ is idempotent. 

There is an obvious notion of {\em oplax monoidal categories}, where the direction of the natural transformations $\gamma$ and $\iota$ is reversed.
If the natural transformations $\gamma$ and $\iota$ are invertible, then a lax monoidal category is just a monoidal category. 

A {\em (lax) monoidal functor} between lax monoidal categories is a functor $F:\Cc\to\Dd$ that comes equipped with natural transformations
\begin{eqnarray*}
\zeta_n:& \ot^\Dd_n F^n\to F\ot^\Cc_n:&\Cc^n\to \Dd
\end{eqnarray*}
for each $n\in\NN$, satisfying the following compatibility conditions with $\gamma$ and $\iota$
\[
\xymatrix{
\ot^\Dd_n(\ot^\Dd_{k_1}\times \ldots\times \ot^\Dd_{k_n})F^{k_1+\ldots+k_n} 
\ar[rr]^-{\gamma_\Dd^{k_1,\ldots,k_n}F^{k_1+\ldots+k_n}} 
\ar[d]_{\ot_n(\zeta_{k_1}\times \ldots\times \zeta_{k_n}) }
&& \ot^\Dd_{k_1+\ldots +k_n}F^{k_1+\ldots+k_n} \ar[dd]^{\zeta_{k_1+\ldots+k_n}}\\
\ot^\Dd_nF^n(\ot^\Cc_{k_1}\times \ldots\times \ot^\Cc_{k_n}) \ar[d]_-{\zeta_n(\ot_{k_1}\times \ldots\times \ot_{k_n})}\\
F\ot^\Cc_n(\ot^\Cc_{k_1}\times \ldots\times \ot^\Cc_{k_n}) \ar[rr]^-{F\gamma_\Cc^{k_1,\ldots,k_n}} && 
F \ot^\Cc_{k_1+\ldots +k_n}
}
\xymatrix{
F \ar[rr]^-{F\iota_\Cc} \ar[dr]_-{\iota_\Dd F} && F\ot^\Cc_1\\
& \ot^\Dd_1 F \ar[ur]_-{\zeta_1}
}
\]
Similarly, a functor $G:\Cc\to\Dd$ is called {\em opmonoidal} if there are natural transformations
\begin{eqnarray*}
\delta_n:& F\ot^\Cc_n\to \ot^\Dd_n F^n &\Cc^n\to \Dd.
\end{eqnarray*}
satisfying appropriate compatibility conditions with $\gamma$ and $\iota$.

The following result might be well-known, but as we didn't found a reference we state it and give a sketch of the proof, which is quite elementary, but because of notational problems becomes quite technical. This result allows to construct many lax monoidal categories. 

\begin{theorem}\thlabel{laxadjoint}
\begin{enumerate}[(i)]
\item
Let $\Dd$ be a lax monoidal category and consider a pair $(L,R)$ of adjoint functors 
\[
\xymatrix{
\Cc \ar@<.5ex>[rr]^-{L} &&\Dd \ar@<.5ex>[ll]^-{R}
}
\]
then $\Cc$ is also a lax monoidal category such that $R$ is a monoidal functor and $L$ is an opmonoidal functor. 
\item 
Let $\Dd$ be an oplax monoidal category and consider a pair $(L,R)$ of adjoint functors 
\[
\xymatrix{
\Dd \ar@<.5ex>[rr]^-{L} &&\Ee \ar@<.5ex>[ll]^-{R}
}
\]
then $\Ee$ is also an oplax monoidal category such that $R$ is a monoidal functor and $L$ is an opmonoidal functor.
\end{enumerate}
\end{theorem}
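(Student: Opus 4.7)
The approach for both parts is the standard \emph{transport of structure along an adjunction}. Part~(ii) is formally dual to (i): it follows by replacing $\Cc,\Dd,L,R$ in part~(i) by $\Ee^{op},\Dd^{op},R^{op},L^{op}$ (using that $R^{op}\dashv L^{op}$ and that oplax monoidal on $\Dd$ is the same as lax monoidal on $\Dd^{op}$). I therefore concentrate on part~(i).

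Define the $n$-fold tensor on $\Cc$ by $\ot_n^\Cc := R\circ \ot_n^\Dd\circ L^n\colon \Cc^n\to\Cc$. Writing $\epsilon\colon LR\to id_\Dd$ and $\eta\colon id_\Cc\to RL$ for the counit and unit of the adjunction, the unit constraint is
\[
\iota_\Cc \;:=\; (R\iota_\Dd L)\circ \eta\colon id_\Cc\longrightarrow RL\longrightarrow R\ot_1^\Dd L = \ot_1^\Cc.
\]
For the coherence constraint, since
\[
\ot_n^\Cc(\ot_{k_1}^\Cc\times\cdots\times\ot_{k_n}^\Cc) = R\ot_n^\Dd(LR\ot_{k_1}^\Dd L^{k_1}\times\cdots\times LR\ot_{k_n}^\Dd L^{k_n}),
\]
I contract each internal $LR$ by $\epsilon$ and then apply $R\gamma_\Dd^{k_1,\ldots,k_n}L^{k_1+\cdots+k_n}$ to land in $R\ot_{k_1+\cdots+k_n}^\Dd L^{k_1+\cdots+k_n}=\ot_{k_1+\cdots+k_n}^\Cc$; this composite is the candidate for $\gamma_\Cc^{k_1,\ldots,k_n}$. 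The functor $R$ is then made monoidal via $\zeta_n := R\ot_n^\Dd(\epsilon,\ldots,\epsilon)\colon \ot_n^\Cc R^n\to R\ot_n^\Dd$, and $L$ is made opmonoidal via $\delta_n := \epsilon_{\ot_n^\Dd L^n}\colon L\ot_n^\Cc\to \ot_n^\Dd L^n$.

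What remains is to verify the associativity and unit axioms for $(\ot_n^\Cc,\gamma_\Cc,\iota_\Cc)$, together with the compatibility of $\zeta$ and $\delta$ with these constraints. The strategy in each case is the same diagram chase: expand both sides into composites of $\gamma_\Dd$, $\iota_\Dd$, copies of $\epsilon$, and the functors $L,R$; slide the $\epsilon$'s past the other pieces by naturality; cancel any residual $\epsilon L\cdot L\eta$ or $R\epsilon\cdot \eta R$ by the triangle identities; what is left is precisely the corresponding axiom for $\Dd$. The main obstacle is therefore not conceptual but purely bookkeeping: the higher lax-monoidal coherence of \cite{Leinster} involves three layers of iterated tensor products indexed by the double family $(\ell_{ij})$, and the various whiskerings $\epsilon\,\ot_{k_i}^\Dd L^{k_i}$ must be matched up carefully with the corresponding pieces of $\gamma_\Dd$. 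Once these labels are set up, every required square commutes for structural reasons rather than by a deeper calculation.
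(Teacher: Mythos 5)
Your proposal is correct and follows essentially the same route as the paper: the same transported tensors $\ot_n^\Cc = R\ot_n^\Dd L^n$, the same unit $\iota_\Cc=(R\iota_\Dd L)\circ\eta$, the same $\gamma_\Cc$ obtained by contracting the internal $LR$'s with $\epsilon$ and then applying $R\gamma_\Dd L^{k_1+\cdots+k_n}$, the same (op)monoidal structures $\zeta_n$ and $\delta_n$ on $R$ and $L$, and the same reduction of the coherence axioms to naturality of $\epsilon$, the triangle identities, and the coherence of $\Dd$, with part (ii) by duality. The paper likewise only sketches these verifications, so your level of detail matches its proof.
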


\begin{proof}
We only give a sketch of the proof of part (i), the second follows by duality.

Denote the $n$-fold tensor products in $\Dd$ by $\ot^\Dd_n$ and its associativity and unity constraints by $\gamma_\Dd$ and $\iota_\Dd$. 

For any $n$-tuple $(c_1,\ldots,c_n)$ of objects in $\Cc$, define the $n$-fold tensor product in $\Cc$ as
\[\ot^\Cc_n(c_1,\ldots,c_n)=R(\ot^\Dd_n(Lc_1,\ldots,Lc_n)).\]
More precisely, the $n$-fold tensor product in $\Cc$ is defined as the following composition of functors
\[\ot^\Cc_n=R\circ \ot^\Dd_n \circ L^n:\Cc^n\to \Cc.\]

Let us denote by $\eta:id_\Cc\to RL$ and $\epsilon:LR\to id_\Dd$ the unit and counit of the adjunction $(L,R)$. For any $n$-tuple $(c_1,\ldots,c_n)$ in $\Cc$, we can consider the morphism
\begin{eqnarray}\eqlabel{Lopmonoidal}
&&\delta_n^{c_1,\ldots,c_n}=\epsilon_{\ot_n(Lc_1,\ldots,Lc_n)}:\\
&&\hspace{2cm}L\ot^\Cc_n(c_1,\ldots,c_n)=LR\ot^\Dd_n(Lc_1,\ldots,Lc_n)\to \ot^\Dd_n(Lc_1,\ldots,Lc_n)\nonumber
\end{eqnarray}
which is natural in each of the entries $c_i$, defining in this way  for each $n\in\NN$ a natural transformation 
\[\delta_n=\epsilon\ot^\Dd_nL^n:L\ot^\Cc_n=LR\ot^\Dd_nL\to \ot^\Dd_nL^n:\Cc^n\to \Dd.\] 
Similarly, for each $n$-tuple $(d_1,\ldots,d_n)$ in $\Dd$ we put
\begin{eqnarray}\eqlabel{Rmonoidal}
&&\zeta_n^{d_1,\ldots,d_n}=R\ot^\Dd_n(\epsilon_{d_1},\ldots,\epsilon_{d_n}):\\
&&\hspace{2cm}R\ot^\Dd_n(LRd_1,\ldots LRd_n)=\ot^\Cc_n(Rd_1,\ldots Rd_n) \to R\ot^\Dd(d_1,\ldots,d_n)\nonumber
\end{eqnarray}
which defines a natural transformation 
\[\zeta_n=R\ot^\Dd_n\epsilon^n:\ot^\Cc_nR^n =R\ot^\Dd_n(LR)^n\to R\ot^\Dd_n:\Dd^n\to \Cc.\]

To define the associativity constraint of $\Cc$, first remark that 
\begin{eqnarray*}
\ot_n\circ (\ot_{k_1}\times \ldots\times \ot_{k_n})
&=&(R\ot^\Dd_n L^n)\circ ((R\ot^\Dd_{k_1} L^{k_1})\times \ldots\times (R\ot^\Dd_{k_n} L^{k_n}))\\
&=&R\ot^\Dd_n (LR)^n(\ot^\Dd_{k_1}\times \ldots\times \ot^\Dd_{k_n})L^{k_1+\ldots+k_n}
\end{eqnarray*}
We now define the associativity constraint $\gamma_\Cc$ of $\Cc$ as the following composition
\[
\xy
(0,0)*{R\ot^\Dd_n (LR)^n(\ot^\Dd_{k_1}\times \ldots\times \ot^\Dd_{k_n})L^{k_1+\ldots+k_n}}="1";
(90,0)*{R(\ot^\Dd_{k_1+\ldots+k_n}))L^{k_1+\ldots+k_n} = \ot^\Cc_{k_1+\ldots+k_n}}="3";
(45,-20)*{R\ot^\Dd_n(\ot^\Dd_{k_1}\times \ldots\times \ot^\Dd_{k_n})L^{k_1+\ldots+k_n}}="2";
{\ar_-{(\zeta_n(\ot^\Dd_{k_1}\times \ldots\times \ot^\Dd_{k_n})L^{k_1+\ldots+k_n})\qquad } "1";"2"}
{\ar_-{\qquad(R\gamma_\Dd^{k_1,\ldots,k_n}L^{k_1+\ldots+k_n})} "2";"3"}
{\ar^{\gamma_\Cc^{k_1,\ldots,k_n}} "1";"3"}
\endxy
\]

The unitality constraint of $\Cc$ is defined as the composition
\[\iota_\Cc=(R\iota_\Dd L)\circ \eta:id_\Cc\to R\ot_1^\Dd L=\ot^\Cc_1.\]

The associativity conditions for the lax monoidal structure on $\Cc$ then follow directly from the naturality of $\epsilon$ and the associativity in $\Dd$. The unitality conditions in $\Cc$ follow from the unit-counit condition of the adjunction $(L,R)$ and the unitality conditions in $\Dd$.

The monoidal structure on the functor $R$ is given by \equref{Rmonoidal}, the op-monoidal structure on $L$ is given by \equref{Lopmonoidal}.
\end{proof}

The previous proposition can be applied in particular to a monoidal category category $\Dd$ and allows to produce in this way many natural examples of (op)lax monoidal categories.

As an intermediate notion between lax monoidal categories and monoidal categories, one can consider a {\em monoidal category with lax unit}. This is a category $\Cc$ endowed with a monoidal tensor product $\ot:\Cc\times\Cc\to \Cc$, endowed with an associativity constraint
\[\alpha_{C,C',C''}:(C\ot C')\ot C''\to C\ot (C'\ot C'')\]
which is a natural isomorphism that satisfies the usual pentagon condition. 
A {\em lax unit} $I$ for such an associative tensor product is an object $I$ in $\Cc$ such that for any $C\in\Cc$ there are natural transformations
\[\ell_C:I\ot C\to C, \quad r:C\ot I\to C\]
satisfying the usual compatibility constraints with $\alpha$: 
\[
\xymatrix{
(C\ot I)\ot C' \ar[dr] \ar[rr]^{\alpha_{C,I,C'}} && C\ot (I\ot C') \ar[dl]\\
& C\ot C'
}
\]
The following is now an easy observation.

\begin{lemma}
If $(\Cc,\ot,I)$ is a monoidal category with lax unit, then $\Cc$ is a lax monoidal category by defining
\begin{itemize}
\item $\ot_0=I$, $\ot_1=id_\Cc$, $\ot_2=\ot$;
\item for all $n>2$, $\ot_n= \ot\circ(id \times \ot_{n-1})$;
\item $\iota=id:id_\Cc\to \ot_1$;
\item for all  $(k_1,\ldots,k_n)\in \NN_0^n$, 
$\gamma^{k_1,\ldots,k_n}$ is canonically obtained from combinations of $\alpha$ and identities and are therefore invertible;
\item for any $(k_1,\ldots,k_n)\in \NN^n$, where $k_{i_1}=\ldots=k_{i_m}=0$ ($m<n$), $\gamma^{k_1,\ldots,k_n}$ is canonically obtained from combinations of $\ell$, $r$, $\alpha$ and identities and are not invertible;
\end{itemize}
\end{lemma}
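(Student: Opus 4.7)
The plan is to verify the four bullet points of the lemma in order, showing that the indicated data genuinely assembles into a lax monoidal structure, with the main work being the coherence axioms.

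First I would make the recursive definition $\ot_n$ precise: set $\ot_0 = I$ (a constant functor), $\ot_1=\mathrm{id}_\Cc$, $\ot_2=\ot$, and $\ot_n=\ot\circ(\mathrm{id}\times \ot_{n-1})$ for $n\geq 3$. Then I would define the structure morphism $\gamma^{k_1,\ldots,k_n}$ by cases. When every $k_i\geq 1$, the source $\ot_n\circ(\ot_{k_1}\times\cdots\times\ot_{k_n})$ and the target $\ot_{k_1+\cdots+k_n}$ are two different binary bracketings of the same ordered list of inputs, so Mac Lane's coherence theorem produces a unique natural isomorphism between them built from $\alpha$'s; I take this as $\gamma^{k_1,\ldots,k_n}$. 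When some of the $k_i$ equal $0$, the source functor has copies of $I$ inserted at the corresponding tensor slots, and the transition to the target amounts to erasing those $I$-factors; I define $\gamma^{k_1,\ldots,k_n}$ by applying $\ell$ (if $I$ sits on the left of some tensor factor, after a choice of reassociation) or $r$ (if on the right), combined with appropriate $\alpha$'s. These are canonical, but no longer invertible in general since $\ell$ and $r$ are not.

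Next I would check the associativity coherence square. When all $\ell_{ij}\geq 1$ and all $k_i\geq 1$, the square reduces to equality of two $\alpha$-built isomorphisms between the same pair of bracketings, which is exactly Mac Lane's coherence theorem for monoidal categories. The subtlety, and main obstacle, is the mixed case where some $\ell_{ij}=0$ or $k_i=0$: one must check that the two routes around the square use $\ell$'s, $r$'s and $\alpha$'s in a way that agrees. For this I would invoke the hypothesis that $(\Cc,\ot,I)$ is a monoidal category with lax unit, which gives exactly the compatibility triangle of $\ell,r$ with $\alpha$ stated in the excerpt. A standard inductive argument, essentially the same as the one used to prove coherence of monoidal categories with not-necessarily-invertible unit constraints, then reduces every mixed diagram to a pasting of pentagons and the $\ell$-$r$-$\alpha$ triangle, all of which commute. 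The unitality axioms are easier: $\iota=\mathrm{id}$ means that the natural transformations $id_{\ot_n}*(\iota,\ldots,\iota)$ and $\iota*id_{\ot_n}$ are identities, and $\gamma^{1,\ldots,1}$ and $\gamma^{n}$ reduce to the identity as well, since the source and target bracketings coincide.

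Finally I would verify that $\ot_1$ is idempotent (immediate from $\ot_1=\mathrm{id}_\Cc$) and remark that the constraints $\gamma^{k_1,\ldots,k_n}$ with all $k_i\geq 1$ are invertible because they are built entirely from $\alpha$'s, while those involving a $k_i=0$ need not be, which is precisely what prevents $\Cc$ from being monoidal in the strict sense. The main obstacle throughout is bookkeeping of the coherence arguments in the mixed case; this could be shortened by appealing to Kelly's or Mac Lane's coherence theorem for monoidal categories with lax unit, which guarantees that any two canonically built natural transformations between the same pair of functors agree.
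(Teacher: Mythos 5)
The paper itself offers no proof of this lemma: it is introduced with ``The following is now an easy observation'' and followed immediately by the next paragraph, so your write-up actually supplies more detail than the source. Your route is the one the statement of the lemma itself prescribes (right-bracketed iterated tensors, $\gamma$ built from $\alpha$ alone when all $k_i\geq 1$ and hence invertible by Mac Lane coherence, $\gamma$ built from $\alpha$, $\ell$, $r$ when some $k_i=0$), and your verification of the unitality axioms and of the all-positive case of the associativity square is correct and complete.

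The one place where you are leaning on something the paper has not literally provided is the mixed case of the associativity square. You propose to reduce it to pastings of pentagons and the single displayed triangle $(C\ot I)\ot C'\to C\ot(I\ot C')\to C\ot C'$. When $\ell$ and $r$ are invertible, Kelly's argument derives the remaining unit coherence diagrams (the end triangles and $\ell_I=r_I$) from the pentagon and that middle triangle, but that derivation cancels unit constraints and so genuinely uses their invertibility; with non-invertible $\ell$ and $r$ these must be imposed as additional axioms (as in skew monoidal categories). So your inductive reduction only closes if one reads the paper's phrase ``the usual compatibility constraints with $\alpha$'' as including all of the unit coherence diagrams, not just the displayed one. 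This is a gap in the precision of the paper's definition at least as much as in your argument, but if you want the proof to be airtight you should either state which unit axioms you are assuming or verify explicitly that the ones you use in the induction are among the hypotheses.
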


Similarly, one introduces the notion of a monoidal category with an oplax unit, which gives rise to an oplax monoidal category.

\subsection{The lax monoidal category of geometric partial comodules over a bialgebra}\selabel{bialg}

The following result is essentially due to Johnstone \cite{Js}, who formulated the proof in case of cartesian closed categories, but the argument easily generalizes to closed monoidal categories.

Let us recall first that a monoidal category is called {\em left closed monoidal} if for each object $X$ in $\Cc$, the endofunctor $X\ot-:\Cc\to\Cc$ has a right adjoint, that we denote by $[X,-]$ and that is called the internal hom. In other words, if $\Cc$ is right closed, then for any triple of objects $X,Y,Z$ in $\Cc$ we have isomorphisms
\[\Hom_\Cc(X\ot Y,Z)\cong \Hom_\Cc(Y,[X,Z])\]
for any $f\in \Hom_\Cc(X\ot Y,Z)$ we denote the corresponding element in $\Hom_\Cc(Y,[X,Z])$ by $\hat f$, and conversely for any $g\in\Hom_\Cc(Y,[X,Z])$, we have $\hat g\in \Hom_\Cc(X\ot Y,Z)$ with $\hat{\hat f}=f$ and $\hat{\hat g}=g$. If one considers the evaluation and coevaluation maps
\[\ev_Y^X:X\ot [X,Y]\to Y;\quad \coev_Y^X:Y\to [X,X\ot Y],\]
then we can write 
\begin{eqnarray*}
\hat f&=&[X,f]\circ \coev_Y^X;\\
\hat g&=&\ev_Z^X\circ (X\ot f).
\end{eqnarray*}
Given any morphism $f:X\to X'$, we can consider the composition 
\[\xymatrix{
X\ot [X',Y] \ar[rr]^{f\ot id} && X'\ot [X',Y] \ar[rr]^{\ev_Y^{X'}} && Y}.\]
We denote the corresponding morphism $[X',Y]\to [X,Y]$ by $[f,Y]$. It is well-known that in this way, $[-,Y]:\Cc\to \Cc$ becomes a contravariant functor for any object $Y$ in $\Cc$.

Suppose that $\Cc$ is moreover right closed, and where the right internal hom denoted by $\{-,-\}$. Then we find for any three objects $X,Y,Z$ in $\Cc$ that
\[\Hom_\Cc(Y,[X,Z])\cong \Hom_\Cc(X\ot Y,Z)\cong \Hom_\Cc(X,\{Y,Z\})\]
Hence 
\[\Hom_{\Cc^{op}}([X,Z],Y)\cong \Hom_\Cc(X,\{Y,Z\})\]
and the (contravariant) functor $[-,Z]:\Cc\to \Cc^{op}$ has a right adjoint $\{-,Z\}$, and therefore $[-,Z]:\Cc\to\Cc$ sends epimorphisms to monomorphisms.

\begin{lemma}\lelabel{Js}
Let $\Cc$ be a bi-closed monoidal category, $f:A\to B$ and epimorphism and $g:C\to D$ a regular epimorphism.  Then the pushout of the pair $(f\ot C, A\ot g)$ is given by $(B\ot D,B\ot g, f\ot D)$.
\[
\xymatrix{
& A\ot C \ar[dl]_{f\ot C} \ar[dr]^{A\ot g} \\
B\ot C \ar[dr]_{B\ot g} \ar@/_/[ddr]_h && A\ot D \ar[dl]^{f\ot D} \ar@/^/[ddl]^k\\
& B\ot D \ar@{}[u]|<<<{\pushout} \ar@{.>}[d]^u \\
& T
}
\]
\end{lemma}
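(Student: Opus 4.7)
The plan is to verify directly the universal property of the pushout for $(B \otimes D, B \otimes g, f \otimes D)$. The outer square commutes by bifunctoriality: $(B \otimes g) \circ (f \otimes C) = f \otimes g = (f \otimes D) \circ (A \otimes g)$. So given any competing cocone with $h : B \otimes C \to T$ and $k : A \otimes D \to T$ satisfying $h \circ (f \otimes C) = k \circ (A \otimes g)$, I must produce a unique $u : B \otimes D \to T$ with $u \circ (B \otimes g) = h$ and $u \circ (f \otimes D) = k$.

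For the existence of $u$, I would exploit that $g$ is a regular epimorphism, so $g = \coeq(g_1, g_2)$ for some parallel pair $g_1, g_2 : K \rightrightarrows C$. Left-closedness of $\Cc$ makes $B \otimes -$ a left adjoint (to $[B, -]$), hence it preserves coequalizers and $B \otimes g = \coeq(B \otimes g_1, B \otimes g_2)$. The existence (and uniqueness) of $u$ with $u \circ (B \otimes g) = h$ thus reduces to showing $h \circ (B \otimes g_1) = h \circ (B \otimes g_2)$. To check this I would pass to the adjoint side: let $\tilde h : C \to [B, T]$ and $\tilde k : D \to [A, T]$ be the adjoint transposes of $h$ and $k$. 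A direct naturality computation transposes the cocone condition $h \circ (f \otimes C) = k \circ (A \otimes g)$ into the identity $[f, T] \circ \tilde h = \tilde k \circ g$ in $\Hom(C, [A, T])$, whence $[f, T] \circ \tilde h \circ g_1 = [f, T] \circ \tilde h \circ g_2$.

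The crux of the argument is now the observation already recorded in the paragraph preceding the lemma: because $\Cc$ is also right closed with internal hom $\{-,-\}$, the contravariant functor $[-, T] : \Cc \to \Cc^{op}$ is left adjoint to $\{-, T\}$ and therefore preserves epimorphisms, i.e.\ it sends epis in $\Cc$ to monos in $\Cc$. Since $f$ is an epi, $[f, T]$ is a monomorphism; cancelling it on the left yields $\tilde h \circ g_1 = \tilde h \circ g_2$, and transposing back gives $h \circ (B \otimes g_1) = h \circ (B \otimes g_2)$, producing the desired factorization $u$.

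Finally I would check the second cocone equation and uniqueness. Precomposing $u \circ (f \otimes D)$ with $A \otimes g$ and using $u \circ (B \otimes g) = h$ together with the already established commutative square gives
\[
u \circ (f \otimes D) \circ (A \otimes g) = u \circ (B \otimes g) \circ (f \otimes C) = h \circ (f \otimes C) = k \circ (A \otimes g);
\]
since $A \otimes -$ likewise preserves the coequalizer, $A \otimes g$ is epi, so $u \circ (f \otimes D) = k$. Uniqueness of $u$ follows at once from $B \otimes g$ being epi. The one step that genuinely uses the full bi-closed hypothesis is the passage from $[f, T] \circ \tilde h \circ g_1 = [f, T] \circ \tilde h \circ g_2$ to $\tilde h \circ g_1 = \tilde h \circ g_2$; the remainder is formal manipulation relying on $A \otimes -$ and $B \otimes -$ each preserving the regular epi $g$.
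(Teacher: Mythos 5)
Your proof is correct and follows essentially the same route as the paper: transpose the cocone condition to $[f,T]\circ\tilde h=\tilde k\circ g$, cancel the monomorphism $[f,T]$ (which is mono because $f$ is epi and $\Cc$ is bi-closed), and then use the coequalizer presentation of $g$ together with the fact that $B\ot -$ and $A\ot -$ preserve it. The only cosmetic difference is that the paper builds the factorization $\hat u:D\to[B,T]$ on the adjoint side and transposes it back, whereas you factor $u$ directly through $B\ot g=\coeq(B\ot g_1,B\ot g_2)$; the two are equivalent.
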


\begin{proof}
Suppose that $g$ is the coequalizer of the pair $r,s:R\to C$. Consider any object $T$ and maps $h:B\ot C\to T$, $k:A\ot D\to T$ such that $\ell=h\circ (f\ot C)=k\circ (A\ot g):A\ot C\to T$. 
Using the left closure on $\Cc$, we find that $\ell$ corresponds uniquely to a morphism $\hat \ell\in \Hom(C,[A,T])$ and one checks that
\[\hat \ell=[A,k]\circ \coev_D^A\circ g=[f,T]\circ [B,h]\circ \coev^B_C.\]
\[
\xymatrix{
&& D\ar[r]^-{\coev^A_D} \ar@{.>}[ddr]^-{\hat u} & [A,A\ot D] \ar[dr]^{[A,k]}\\
R\ar@<.5ex>[r]^r \ar@<-.5ex>[r]_s &C\ar@{->>}[ur]^g \ar[dr]_-{\coev^B_C} &&& [A,T]\\
&& [B,B\ot C] \ar[r]_-{[B,h]} & [B,T] \ar@{>->}[ur]_-{[f,T]}
}
\]
Since $g$ coequalizes the pair $(r,s)$, it follows from the first equality that $\hat\ell$ also coequalizes the pair $(r,s)$. Furthermore, since $f$ is an epimorphism, $[f,T]$ is a monomorphism and we find that
\[[B,h]\circ \coev^B_C\circ r=[B,h]\circ \coev^B_C \circ s.\]
Therefore, the universal property of the coequalizer $g$ leads to a unique morphism
$\hat u:D\to [B,T]$ such that 
\[\hat u\circ g= [B,h]\circ \coev^B_C:C\to [B,T].\]
Moreover, since $g$ is an epimorphism, $\hat u$ also satisfies
\[[f,T]\circ \hat u=[A,k]\circ \coev^A_D\]
Consequently the induced morphism $\hat{\hat u}=u:B\ot D\to T$ satisfies
\[u\circ (B\ot g)=h,\quad u\circ (f\ot D)=k\]
and is unique in this sense, which proves the universal property of the pushout $(B\ot D,B\ot g, f\ot D)$.
\end{proof}

Let $\Cc$ be a braided monoidal category with pushouts and consider be a bialgebra $H$ in $\Cc$. 
Let $(X,X\bul H,\pi_X,\rho_X)$ and $(Y,Y\bul H,\pi_Y,\rho_Y)$ be two partial comodule data over $H$. 
Then we can construct a new partial comodule datum $(X\ot Y,(X\ot Y)\bul H,\pi_{X\ot Y},\rho_{X\ot Y})$ in the following way. Consider the map  $\mu_{X, Y}=(X\ot Y\ot \mu_H)\circ (X\ot \sigma_{H,Y}\ot H)$, where $\sigma$ denotes the braiding of the category. Then define $(X\ot Y)\bul H$ and $\pi_{X\ot Y}$ by the following pushout.
\[
\xymatrix{
& X\ot H\ot Y\ot H \ar[dr]^{\mu_{X, Y}} \ar@{->>}[dl]_{\pi_X\ot \pi_Y}\\
(X\bul H)\ot (Y\bul H) \ar[dr]_{\ol\mu} && X\ot Y\ot H \ar@{->>}[dl]^{\pi_{X\ot Y}}\\
& (X\ot  Y)\bul H \ar@{}[u]|<<<{\pushout}
}
\]
By taking $\rho_{X\ot Y}=\ol\mu\circ(\rho_X\ot\rho_Y)$, we obtain the desired partial comodule datum.

This construction lead to the following result.

\begin{proposition}\prlabel{PCDmonoidal}
Let $\Cc$ be a braided closed monoidal category where all epimorphisms are regular and let $H$ be a bialgebra in $\Cc$.

Then by the above defined tensor product, the category of partial comodule data over $H$ is a monoidal category with an op-lax unit, such that the following is a diagram of strict monoidal functors.
\[
\xymatrix{
\Mod^H \ar[rr] \ar[dr] && \PCD^H \ar[dl]\\
& \Cc
}
\]
\end{proposition}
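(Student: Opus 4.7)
The plan is to verify the four claims in succession: well-definedness of the tensor product, existence of a coherent associator, construction of the op-lax unit, and strict monoidality of the two functors in the diagram. The main obstacle will be step two, where checking the pentagon axiom requires a delicate comparison of iterated pushouts.

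First, to show that $X\otimes Y$ is a genuine partial comodule datum, I need only check that $\pi_{X\otimes Y}$ is an epimorphism. Since $\Cc$ is closed monoidal, each endofunctor $-\otimes A$ is a left adjoint and hence preserves colimits, so $\pi_X\otimes \pi_Y$ is an epimorphism as a tensor product of epimorphisms, and its pushout $\pi_{X\otimes Y}$ along $\mu_{X,Y}$ is again epi.

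Second, for the associator, both $((X\otimes Y)\otimes Z)\bullet H$ and $(X\otimes (Y\otimes Z))\bullet H$ can be realized as iterated pushouts of diagrams built from $X\otimes H\otimes Y\otimes H\otimes Z\otimes H$ via the $\pi$'s and $\mu_H$. Using \leref{Js} to manage the intermediate pushouts and invoking the associativity of $\mu_H$ together with the associator of $\Cc$, one obtains a canonical isomorphism between the two realizations lifting the associator of $\Cc$. Pentagon then follows from the universal property of the pushouts and uniqueness of the induced maps.

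Third, for the op-lax unit, I would take $\mathbb{1}=(k, H, id_H, \eta_H)$, the trivial global $H$-comodule. Then $(X\otimes \mathbb{1})\bullet H$ is the pushout of $(\pi_X\otimes H, X\otimes \mu_H)$, with pushout injections $p_1, p_2$. I define the right unitor $r_X\colon X\to X\otimes \mathbb{1}$ with underlying $\Cc$-map the right unit constraint and structural component $\tilde r_X = p_1\circ (id_{X\bullet H}\otimes \eta_H)$. Compatibility with $\rho_X$ is immediate from the definition $\rho_{X\otimes \mathbb{1}}=\bar\mu\circ (\rho_X\otimes \eta_H)$, while compatibility with $\pi_X$ follows from the pushout relation $p_1\circ (\pi_X\otimes H)=p_2\circ (X\otimes \mu_H)$ specialized at $1_H\in H$. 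In general $\tilde r_X$ is not invertible (the pushout genuinely enlarges $X\bullet H$ when $X$ is truly partial), giving the op-lax character of the unit; the left unitor is constructed analogously and the triangle axiom follows from universal properties. Finally, strict monoidality of $\Mod^H\hookrightarrow \PCD^H$ follows because for global $X, Y$ the maps $\pi_X, \pi_Y$ are isomorphisms, so the pushout defining $(X\otimes Y)\bullet H$ collapses to $(X\otimes Y)\otimes H$ equipped with the diagonal coaction, and the forgetful functor $\PCD^H\to \Cc$ is strict monoidal by the very definition of the tensor.
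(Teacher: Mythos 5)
Your proposal is correct and follows essentially the same route as the paper: the same realization of both iterated tensor products as pushouts from $X\ot H\ot Y\ot H\ot Z\ot H$ managed via \leref{Js}, the same op-lax unit $(k,H,id_H,\eta_H)$, and the same unit morphism $X\to X\ot k$ built from the pushout leg composed with $\eta_H$. One small inaccuracy in a side remark: the pushout defining $(X\ot k)\bul H$ does not enlarge $X\bul H$ but quotients it further (in $\Vect$ it is $X\ot H$ modulo $(X\ot\mu)(\ker\pi_X\ot H)\supseteq \ker\pi_X$), though your conclusion that the unit constraint is generally non-invertible, hence the unit is only op-lax, is exactly the paper's point.
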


\begin{proof}
Let us first verify the associativity of the defined tensor product for $\PCD$. Consider 3 partial comodule data $\ul X,\ul Y,\ul Z$ and
consider the following diagram. 
\[
\xymatrix{
&&&& \hbox to 3em{\hss$ X\ot H\ot Y\ot H \ot Z\ot H$\hss} \ar[drr]^{\mu_{X, Y}\ot Z\ot H} \ar@{->>}[dll]_{\pi_X\ot \pi_Y\ot Z\ot H}\\
&&\hbox to 3em{\hss$(X\bul H)\ot (Y\bul H)\ot Z\ot H$\hss} \ar[drr]_{\ol\mu\ot Z\ot H} \ar@{->>}[dll]_{X\bul H\ot Y\bul H\ot \pi_Z} &&&& \hbox to 3em{\hss$X\ot Y\ot H\ot Z\ot H$\hss} \ar@{->>}[dll]^{\pi_{X\ot Y}\ot Z\ot H} \ar[drr]^{\mu_{(X\ot Y), Z}}\\
\hbox to 3em{\hss$(X\bul H)\ot (Y\bul H)\ot (Z\bul H)$\hss} \ar[drr]_{\ol\mu\ot (Z\bul H)}
&&&& \hbox to 3em{\hss$(X\ot  Y)\bul H \ot Z\ot H$\hss} \ar@{}[u]|<<<{\pushout} \ar[dll]^{((X\ot Y)\bul H)\ot\pi_Z} 
&&&& \hbox to 3em{\hss$X\ot Y\ot Z\ot H$\hss} \ar[ddllll]\\
&& \hbox to 3em{\hss$((X\ot Y)\bul H)\ot (Z\bul H)$\hss} \ar@{}[u]|<<<{\pushout} \ar[drr] && \\
&&&& \hbox to 3em{\hss$((X\ot Y)\ot Z)\bul H$\hss} \ar@{}[u]|<<<{\pushout}
}
\]
The upper square is a pushout by definition of the tensor product and the fact that the functor $-\ot Z\ot H$ preserves pushouts since it has a right adjoint. The down square is a pushout by definition of the tensor product. The left square is a pushout by the \leref{Js}. hence, by combining these pushouts we find that 
$((X\ot Y)\ot Z)\bul H$ is the pushout of $(X\bul H\ot Y\bul H\ot\pi_Z)\circ(\pi_X\ot\pi_Y\ot Z)\simeq\pi_X\ot\pi_Y\ot\pi_Z$ along $\mu_{(X\ot Y), Z}\circ (\mu_{X,Y}\ot Z\ot H)$.
In the same way, $(X\ot (Y\ot Z))\bul H$ is shown to be the pushout of $\pi_X\ot\pi_Y\ot\pi_Z$ and $\mu_{X, Y\ot Z}\circ (X\ot H\ot \mu_{Y, Z})$. One can easily verify that by the properties of the braiding in $\Cc$ and associativity of the multiplication of $H$, the maps $\mu_{(X\ot Y), Z}\circ (\mu_{X,Y}\ot Z\ot H)$ and $\mu_{X, Y\ot Z}\circ (X\ot H\ot \mu_{Y, Z})$ are the same up-to-isomorphism. Hence we find that $((X\ot Y)\ot Z)\bul H\cong (X\ot (Y\ot Z))\bul H$, which induces the associativity constraint for the monoidal product in $\PCD$.

Next, let us verify that the partial comodule datum $\ul k=(k,H,id_H,\eta)$ is an oplax unit for this monoidal product. Consider any partial comodule datum $\ul X=(X,X\bul H,\pi_X,\rho_X)$ and construct the tensor product $\ul X\ot \ul k$. We know that the underlying $\Cc$ object is just $X\ot k\cong X$ via the isomorphism $r_X:X\to X\ot k$. Futhermore, the object $(X\ot k)\bul H$ is constructed by the following pushout.
\[
\xymatrix{
& X\ot H \ot H \ar[dr]^{X\ot \mu } \ar@{->>}[dl]_{\pi_X\ot H}\\
(X\bul H)\ot H \ar[dr]_{\ol\mu} && X\ot H \ar@{->>}[dl]^{\pi_{X\ot k}}\\
& (X\ot  k)\bul H \ar@{}[u]|<<<{\pushout}
}
\]
Then consider the map $r_X\bul H:=\ol\mu\circ (X\bul H)\ot\eta:X\bul H\to (X\ot  k)\bul H$. One easily verifies that $r_X\bul H\circ \pi_X=\pi_{X\ot k}$ and therefore $(r_X,r_X\bul H):\ul X\to \ul X\ot \ul k$ is a morphism of partial comodule data. 
\end{proof}

Let us remark that the category of partial comodule data can not have a (strong) monoidal unit. 
Indeed, since the forgetful functor is strict monoidal, the underlying $\Cc$-object of the monoidal unit needs to be the monoidal unit $k$ of $\Cc$. Hence the monoidal unit should be of the form $(k,K,\pi,\rho)$, where $K$ is a quotient of $H$. However, the pushout
\[
\xymatrix{
& X\ot H \ot H \ar[dr]^{X\ot \mu } \ar@{->>}[dl]_{\pi_X\ot \pi}\\
(X\bul H)\ot K \ar[dr]_{\ol\mu} && X\ot H \ar@{->>}[dl]^{\pi_{X\ot k}}\\
& (X\ot  k)\bul H \ar@{}[u]|<<<{\pushout}
}
\]
can be computed explicitly in the case $\Cc=\Vect$ via \leref{pushoutvect}
and in this case we see that $(X\ot  k)\bul H$ is the quotient of $X\ot H$ with respect to $(X\ot \mu)(\ker \pi_X \ot H + X\ot H\ot\ker \pi)$. However, this last set is strictly larger then $\ker\pi_X$, so we can never get that $(X\ot  k)\bul H\cong X\bul H$. Nevertheless, the oplax monoidal unit from \prref{PCDmonoidal} becomes a strong unit for a suitable subcategory that we will define now.

\begin{definition}
We call a partial comodule datum $X$ over a bialgebra $H$ {\em left equivariant} if the morphism $\pi_X:X\ot H\to X\bul H$ is left $H$-linear by the free $H$-action on $X\ot H$. More explicitly, in case $\Cc=\Vect_k$, this means that if $x\ot h\in\ker\pi_X$, then also $x\ot h'h\in\ker\pi_X$ for all $h'\in H$. In the same way, one defines {\em right equivariant} partial comodule data. When $X$ is both left and right equivariant, we simply say it is equivariant.
\end{definition}

\begin{example}
Consider the \exref{CJ} of a Caenepeel-Janssen partial action. Since $A\bul H=(A\ot H)e$ it is a left $A\ot H$-module, hence in particular left equivariant.
\end{example}

\begin{corollary}
The oplax monoidal unit of $\PCD$ is a left monoidal unit for all left equivariant partial comodule data, and a right monoidal unit for all right equivariant partial comodule data.

Consequently, the category of equivariant partial comodule data over a $k$-bialgebra $H$ is monoidal.
\end{corollary}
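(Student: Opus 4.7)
The plan is to establish both statements directly from the pushout that defines $(X\ot k)\bul H$, unraveled via \leref{pushoutvect}, and then to verify that equivariance is preserved under the tensor product of \prref{PCDmonoidal}. The guiding principle is that equivariance is precisely the hypothesis needed to make the extra identifications introduced by the oplax unit become vacuous.

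First I would compute the right-unit pushout explicitly. Instantiating the defining pushout of $\ul X\ot \ul k$ with $\ul k=(k,H,\mathrm{id}_H,\eta)$ and simplifying via the unit isomorphism $X\ot k\cong X$ yields the pushout of $(\pi_X\ot H,\, X\ot\mu_H)$ with corners $(X\bul H)\ot H$ and $X\ot H$. By \leref{pushoutvect} (which applies since $\pi_X\ot H$ is a regular epimorphism, e.g.\ using \leref{Js} in the general bi-closed setting), this pushout is the quotient of $X\ot H$ by the image $(X\ot \mu_H)(\ker\pi_X\ot H)$, that is, by the subobject obtained from $\ker\pi_X$ by right multiplication on the $H$-factor. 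Right equivariance of $X$ means precisely that this subobject coincides with $\ker\pi_X$ itself, so that $(X\ot k)\bul H\cong X\bul H$. A direct chase through the definition $r_X\bul H=\ol\mu\circ((X\bul H)\ot\eta)$ given in the proof of \prref{PCDmonoidal} identifies this isomorphism with $r_X\bul H$, and a second chase verifies that $\rho_{X\ot k}=r_X\bul H\circ\rho_X$ under the identification. Hence $(r_X,r_X\bul H)$ is an isomorphism in $\PCD$. The analogous computation for $\ul k\ot \ul X$, after rewriting $\mu_{k,X}$ via the braiding, yields the quotient of $X\ot H$ by the subobject generated from $\ker\pi_X$ by \emph{left} multiplication on the $H$-factor, which collapses to $\ker\pi_X$ exactly when $X$ is left equivariant.

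To obtain the second (consequently-) assertion, I would next verify that the tensor product of \prref{PCDmonoidal} preserves equivariance on both sides. Assume $X$ and $Y$ are right equivariant; then $Y\bul H$ carries a well-defined right $H$-action by $\pi_Y(y\ot h)\cdot h'=\pi_Y(y\ot hh')$, and the two legs $\pi_X\ot\pi_Y$ and $\mu_{X,Y}$ of the pushout defining $(X\ot Y)\bul H$ are right $H$-linear for the evident right actions on $X\ot H\ot Y\ot H$, $(X\bul H)\ot(Y\bul H)$ and $X\ot Y\ot H$ (this uses only associativity of $\mu_H$ and naturality of the braiding). By the universal property, $(X\ot Y)\bul H$ inherits a right $H$-action for which $\pi_{X\ot Y}$ is right $H$-linear, so $X\ot Y$ is again right equivariant; the argument on the left is symmetric. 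Combined with the previous paragraph, the full subcategory of equivariant partial comodule data is closed under $\ot$ and the oplax unit $\ul k$ becomes a strong unit there, giving a genuine monoidal structure.

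The technical obstacle I anticipate is the explicit description of the subobject by which one quotients, in a general braided closed monoidal $\Cc$ rather than $\Vect$; the equivariance condition is phrased elementwise, and must be interpreted as ``$\ker\pi_X$ is stable under the free $H$-action on $X\ot H$'', using \leref{Js} to make the pushout computation element-free. Once this identification is made, however, both parts reduce to the universal property of the quotient.
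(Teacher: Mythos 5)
Your proof is correct and follows essentially the same route as the paper: compute the unit pushout explicitly via \leref{pushoutvect} and observe that (left/right) equivariance makes the subspace $(X\ot \mu)(\ker\pi_X\ot H)$ collapse to $\ker\pi_X$, so that the pushout is $X\bul H$ itself and $r_X\bul H$ is an isomorphism. You additionally verify that equivariance is preserved by the tensor product of \prref{PCDmonoidal} --- a step the paper leaves implicit but which is genuinely needed for the final ``consequently'' assertion --- so your write-up is, if anything, more complete.
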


\begin{proof}
Consider the pushout
\[
\xymatrix{
& X\ot H \ot H \ar[dr]^{X\ot \mu } \ar@{->>}[dl]_{\pi_X\ot H}\\
(X\bul H)\ot H \ar[dr]_{p_1} && X\ot H \ar@{->>}[dl]^{p_2}\\
& P \ar@{}[u]|<<<{\pushout}
}
\]
By \leref{pushoutvect}, we know that $P$ can be computed as the quotient of $X\ot H$ by the subspace $(X\ot \mu)(\ker\pi_X)$. However, under the stated assumptions, $\ker\pi_X$ is a right $H$-submodule of $X\ot H$ and hence $(X\ot \mu)(\ker\pi_X)=\ker\pi_X$. Therefore, $P$ is exactly $X\bul H$ and $p_2$ is nothing else than $\pi_X$.

From this observation and the definition of the tensor product, it now follows that $X\ot k\cong X$ in $\PCD$. Similarly, using that $\ker\pi_X$ is a left $H$-submodule of $X\ot H$, we find that $k$ is a left unit for the monoidal structure on $\PCD$.
\end{proof}

The result from \prref{PCDmonoidal} leads to the natural question whether the full subcategories of the category of partial comodule data, consisting of quasi, lax and geometric partial comodules inherit a monoidal structure. To answer this question, let us compute the pushout $(X\ot Y)\bul(H\bul H)$, which is given by the following composition of pushouts
\[
\xymatrix{
X\ot H\ot Y\ot H \ar[d]_{\pi_X\ot \pi_Y} \ar[r]^-{\mu_{X,Y}} & X\ot Y\ot H \ar[r]^-{X\ot Y\ot\Delta} \ar[d]^{\pi_{X\ot Y}} & X\ot Y\ot H\ot H \ar[r]^-{\pi_{X\ot Y}\ot H} \ar[d]^{\pi_{X\ot Y,\Delta}} & ((X\ot Y)\bul H)\ot H \ar[d]^{\pi'_{X\ot Y,\Delta}}\\
(X\bul H)\ot (Y\bul H) \ar[r]_-{\ol \mu_{X,Y}} & (X\ot Y)\bul H \ar[r]_-{(X\ot Y)\bul\Delta} \ar@{}[ul]|<<<{\pushoutv}
& (X\ot Y)\bul(H\ot H) \ar[r]_-{\pi'_{X\ot Y}} \ar@{}[ul]|<<<{\pushoutv}& (X\ot Y)\bul (H\bul H)\ar@{}[ul]|<<<{\pushoutv}
}
\]
Furthermore, checks that the composition of the upper morphisms can be rewritten as
\begin{eqnarray*}
&&(\pi_{X\ot Y}\ot H)\circ (X\ot Y\ot\Delta)\circ \mu_{X,Y}= \\
&&\hspace{2cm}(\ol\mu_{X,Y}\ot H) \circ \mu_{X\bul H,Y\bul H} \circ (\pi_X\ot H\ot\pi_Y\ot H)\circ (X\ot\Delta\ot Y\ot\Delta)
\end{eqnarray*}
In the same way, the pushout $((X\ot Y)\bul H)\bul H$ is given by the following composition of pushouts
\[
\xymatrix{
X\ot H\ot Y\ot H \ar[d]_{\pi_X\ot \pi_Y} \ar[r]^-{\mu_{X,Y}} 
& X\ot Y\ot H \ar[r]^-{\rho_X\ot \rho_Y\ot H} \ar[d]^{\pi_{X\ot Y}} & 
(X\bul H)\ot (Y\bul H)\ot H \ar[r]^-{\ol\mu_{X,Y}\ot H}  & ((X\ot Y)\bul H)\ot H \ar[d]^{\pi_{(X\ot Y)\bul H}}\\
(X\bul H)\ot (Y\bul H) \ar[r]_-{\ol \mu_{X,Y}} & (X\ot Y)\bul H \ar[rr]^{\rho_{X\ot Y}\bul H} 
&& ((X\ot Y)\bul H)\bul H\ar@{}[ul]|<<<{\pushoutv}
}
\]
and again we can rewrite the composition of the upper morphisms
\begin{eqnarray*}
&&(\ol\mu_{X,Y}\ot H)\circ (\rho_X\ot \rho_Y\ot H)\circ \mu_{X,Y} =\\
&&\hspace{2cm} (\ol\mu_{X,Y}\ot H)\circ \mu_{X\bul H,Y\bul H}\circ (\rho_X\ot H\ot \rho_Y\ot H)
\end{eqnarray*}
Hence, to study the relation between $(X\ot Y)\bul(H\bul H)$ and $((X\ot Y)\bul H)\bul H$, we have to compare the following pushouts
\[
\hspace{-1cm}
\xymatrix{
X\ot H\ot Y\ot H \ar[d]_{\pi_X\ot \pi_Y} \ar[rr]^-{X\ot\Delta\ot Y\ot\Delta} 
&& X\ot H\ot H\ot Y\ot H\ot H \ar[rr]^-{\pi_X\ot H\ot\pi_Y\ot H} 
& &
(X\bul H)\ot H\ot (Y\bul H)\ot H \ar[d]^{p_2}\\
(X\bul H)\ot (Y\bul H) \ar[rrrr]^{p_1}
&&&& P
 \ar@{}[ul]|<<<{\pushoutv}
}
\]
and 
\[
\xymatrix{
X\ot H\ot Y\ot H \ar[d]_{\pi_X\ot \pi_Y} \ar[rr]^-{\rho_X\ot H\ot \rho_Y\ot H} 
&&
(X\bul H)\ot H\ot (Y\bul H)\ot H \ar[d]^{q_2}\\
(X\bul H)\ot (Y\bul H) \ar[rr]^{q_1}
&& Q
 \ar@{}[ul]|<<<{\pushoutv}
}
\]
Using \leref{pushoutvect}, we find that $P$ and $Q$ are isomorphic to quotients of $(X\bul H)\ot H\ot (Y\bul H)\ot H$ by the respective subspaces
\begin{eqnarray*}
&&(\pi_X\ot H\ot\pi_Y\ot H)\circ (X\ot\Delta\ot Y\ot\Delta)(\ker(\pi_X\ot\pi_Y))=\\
&&\hspace{2cm}(\pi_X\ot H\ot\pi_Y\ot H)\circ (X\ot\Delta\ot Y\ot\Delta)(\ker \pi_X\ot Y\ot H+X\ot H\ot \ker\pi_Y)
\end{eqnarray*}
and
\begin{eqnarray*}
&&(\rho_X\ot \rho_Y)(\ker(\pi_X\ot\pi_Y))=(\rho_X\ot \rho_Y)(\ker \pi_X\ot Y\ot H+X\ot H\ot \ker\pi_Y)
\end{eqnarray*}
Since in general $(\pi_X\ot H)\circ (X\ot \Delta)(X\ot H)\not\cong (\rho_X\ot H)(X\ot H)$, we find that $P$ and $Q$ are non-isomorphic, and therefore also $(X\ot Y)\bul(H\bul H)$ and $((X\ot Y)\bul H)\bul H$ are non-isomorphic (even if $X$ and $Y$ are geometric partial comodules). 

Hence, we can conclude that when $X$ and $Y$ are geometric (respectively lax) partial comodules, then $X\ot Y$ is in general no longer a geometric (respectively lax) partial comodule. 
However, when $X$ and $Y$ are both quasi comodules, then $\theta_1\circ (\rho_X\bul H)$ and $\theta_2\circ \ol{(X\bul \Delta)}$ {\em do} have identical images when restricted to the image of $\rho_X$, we find that $X\ot Y$ is still a quasi partial comodule. We can then conclude on the following.

\begin{theorem}
Let $\Cc$ be a category satisfying the conditions of \prref{PCDmonoidal}.
The category of quasi partial comodules over a bialgebra in $\Cc$ is monoidal with an oplax monoidal unit, and the forgetful functor to $\Cc$ is strict monoidal.  

The category of equivariant quasi partial comodules over a bialgebra is a monoidal category.
\end{theorem}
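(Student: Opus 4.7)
The plan is to bootstrap everything from \prref{PCDmonoidal} and its corollary on the equivariant case. Since $\qPMod^H$ is a full subcategory of $\PCD^H$ (morphisms are just morphisms of partial comodule data) and the associativity and unit constraints at the level of $\PCD^H$ are automatically morphisms of $\qPMod^H$ once we know their source and target lie in $\qPMod^H$, it suffices to verify two closure properties: (i) the oplax unit $\ul k=(k,H,id_H,\eta)$ is a quasi partial comodule, and (ii) if $\ul X$ and $\ul Y$ are quasi, then so is $\ul X\ot\ul Y$ as constructed in \seref{bialg}. The coherence diagrams (pentagon, triangle, and their oplax-unit analogues) are then inherited for free from $\PCD^H$, and strict monoidality of the forgetful functor $\qPMod^H\to \Cc$ is transparent from the construction since on underlying objects $\ul X\ot \ul Y=X\ot Y$.

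For (i), $\ul k$ is a global comodule (it is just $H$ seen as a right $H$-comodule via $\Delta$), and global comodules are quasi, so [QPC1] and [QPC2] are automatic. For (ii), axiom [QPC1] for $\ul X\ot\ul Y$ follows by applying the natural transformation induced by $\epsilon_H$ to the pushout defining $(X\ot Y)\bul H$: the composition with $\ol\mu\circ(\rho_X\ot \rho_Y)$ collapses to $r_{X\ot Y}$ via the counitality in $\ul X$ and $\ul Y$ together with $\epsilon\circ\mu=\epsilon\ot\epsilon$. The main work lies in [QPC2], which requires proving the identity
\[
\theta_1^{X\ot Y}\circ(\rho_{X\ot Y}\bul H)\circ \rho_{X\ot Y}
 \;=\;\theta_2^{X\ot Y}\circ \pi'_{X\ot Y}\circ ((X\ot Y)\bul\Delta)\circ\rho_{X\ot Y}
\]
in the coassociativity pushout $\Theta_{X\ot Y}$.

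The strategy for [QPC2] is the one already signalled in the discussion immediately preceding the theorem. Expand both sides using the construction of $((X\ot Y)\bul H)\bul H$ and $(X\ot Y)\bul(H\bul H)$ as iterated pushouts, and factor each through the canonical quotient of $(X\bul H)\ot H\ot (Y\bul H)\ot H$ by the subspaces $P$ and $Q$ described before the theorem. The key observation is that although $P\ne Q$ in general (so the analogous identity fails at the lax/geometric level), the two expressions we are comparing are obtained by precomposing with $\rho_X\ot\rho_Y$; after this precomposition, the quasi coassociativity of $\ul X$ and of $\ul Y$ says exactly that the image of $\theta_1\circ(\rho\bul H)\circ\rho$ and of $\theta_2\circ\pi'\circ(\bul\Delta)\circ\rho$ agree in $\Theta_X$ (resp. $\Theta_Y$), and these agreements push forward through the universal map into $\Theta_{X\ot Y}$. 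The hard part will be keeping the diagrammatic bookkeeping straight: one needs a careful interchange argument to see that the braiding $\mu_{X,Y}$, the comultiplication $\Delta$ and the pushout projections commute appropriately so that the quasi coassociativity of $\ul X$ and $\ul Y$ can be invoked independently inside $\Theta_{X\ot Y}$.

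Finally, for the equivariant part we combine the first assertion with the corollary that equivariant partial comodule data form a (strongly) monoidal subcategory of $\PCD^H$. It only remains to check that equivariance is preserved by the tensor product $\ot$ of $\PCD^H$ and that $\ul k$ is equivariant; the latter is immediate since $\pi_{\ul k}=id_H$ is $H$-bilinear, and the former follows from the explicit pushout formula for $(X\ot Y)\bul H$ (using \leref{pushoutvect} in $\Vect$, or more generally the stability of regular epimorphisms under tensoring), since the $H$-action used to define equivariance commutes with $\mu_{X,Y}$ by associativity of the multiplication of $H$. Intersecting the full subcategory of quasi partial comodules with that of equivariant partial comodule data therefore yields a monoidal category with strict unit, as claimed.
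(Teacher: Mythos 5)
Your proposal takes essentially the same route as the paper: the monoidal structure, oplax unit, and strict monoidality of the forgetful functor are all inherited from \prref{PCDmonoidal} (and the equivariant statement from its corollary), and the only substantive point is that the tensor product of two quasi partial comodules again satisfies [QPC2], which the paper likewise disposes of in a single sentence by asserting that the two coassociativity composites agree after precomposition with $\rho_{X\ot Y}$. The ``careful interchange argument'' you flag as the hard part is indeed where the real work hides --- telescoping the difference of the two sides produces mixed defect terms (a $(\pi_X\ot H)\circ(X\ot\Delta)$-type defect on the $X$-factor against a $(\rho_Y\ot H)$-type defect on the $Y$-factor, and vice versa) that lie in neither of the two kernels individually and must be seen to cancel in pairs --- but the paper does not spell this out either, so your sketch is faithful to, and no less complete than, the published argument.
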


Although the above introduced tensor product is not well-defined on the category geometric partial comodules, in case work with a bialgebra over a field, we can combine \prref{PCDmonoidal} with \thref{laxadjoint} and obtain immediately the following result.

\begin{theorem}
The category of geometric partial comodules over a bialgebra $H$ over a field $k$ is an op-lax monoidal category and the forgetful functor $U:\gPMod^H\to \Vect_k$ is monoidal. 
\end{theorem}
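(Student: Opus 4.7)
The plan is to transport the op-lax monoidal structure of $\PCD^H$ to the reflective full subcategory $\gPMod^H$, using the adjunction provided by the ``geometrization'' corollary established earlier in this section, and then to compose with the strict monoidal forgetful functor $\PCD^H\to\Vect_k$.

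First, \prref{PCDmonoidal} exhibits $\PCD^H$ as a monoidal category with an op-lax unit, whose forgetful functor to $\Vect_k$ is strict monoidal. Combined with the remark (made in the paper just after the lemma on monoidal categories with a lax unit) that a monoidal category with an op-lax unit canonically gives rise to an op-lax monoidal category, this makes $\PCD^H$ op-lax monoidal in a way compatible with the forgetful functor to $\Vect_k$.

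Next, I would apply part~(ii) of \thref{laxadjoint} to the adjunction $B\dashv U$, where $U:\gPMod^H\to\PCD^H$ is the fully faithful inclusion and $B$ is its left adjoint, setting $\Dd=\PCD^H$, $\Ee=\gPMod^H$, $L=B$ and $R=U$. This immediately endows $\gPMod^H$ with an op-lax monoidal structure and makes $U$ into a monoidal functor, the $n$-fold tensor being
\[
\ot_n^{\gPMod^H}(X_1,\ldots,X_n)\;=\;B\bigl(X_1\ot\cdots\ot X_n\bigr),
\]
i.e.\ the largest geometric partial quotient of the $\PCD^H$-tensor product. Composing the monoidal functor $U:\gPMod^H\to\PCD^H$ with the strict monoidal forgetful functor $\PCD^H\to\Vect_k$ then yields a monoidal forgetful functor $\gPMod^H\to\Vect_k$, as required.

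The main obstacle is not so much conquered as sidestepped: the computation comparing $(X\ot Y)\bul(H\bul H)$ with $((X\ot Y)\bul H)\bul H$ earlier in the section shows that the $\PCD^H$-tensor product does not preserve the geometric condition, so one cannot inherit a strict monoidal structure on $\gPMod^H$ directly. The reflector $B$ is precisely what repairs this, at the cost of introducing genuine (non-invertible) op-lax coherences $\gamma$ and $\iota$ on $\gPMod^H$; all coherence axioms then follow formally from \thref{laxadjoint}(ii), so no further verification is needed.
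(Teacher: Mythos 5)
Your proposal is correct and follows exactly the route the paper takes: the paper's own (one-line) proof is precisely to combine \prref{PCDmonoidal} with \thref{laxadjoint}(ii) applied to the reflection $B\dashv U$ between $\PCD^H$ and $\gPMod^H$, and the paper's subsequent remark confirms that the resulting tensor $M\bul N$ is the largest geometric quotient of the comodule-datum tensor product, as in your formula. Your identification of $\Dd=\PCD^H$, $\Ee=\gPMod^H$, $L=B$, $R=U$, and the composition with the strict monoidal forgetful functor to $\Vect_k$, is exactly what the authors intend.
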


\begin{remark}
Let us describe the oplax tensor product of the category $\gPMod^H$ a bit more explicitly.
Consider two geometric partial comodules $M$ and $N$, and let $M\ot N$ be the tensor product partial comodule datum (which we know is a quasi partial comodule). Then $M\bul N$ is the geometric partial comodule that is uniquely defined by the following universal property. There exists a morphism $p:M\ot N\to M\bul N$ and for every other geometric partial comodule $T$ with a morphism $M\ot N\to T$, there exists a unique morphism $u: M\bul N\to T$ such that $t=u\circ p$.

Since the zero module is a geometric partial comodule that is a minimal solution for the above problem, $M\bul N$ will always exist. 
Given two $p:M\ot N\to P$ and $q:M\ot N\to Q$, Let $R$ be the pullback of the pushout of $p$ and $q$ (which exist since we proved that geometric partial comodules are bi-complete). Then there is a unique morphism $M\ot N\to R$ compatible with both $p$ and $q$. In this way, we can construct the ``biggest'' quotient $M\bul N$ of $M\ot N$ that is still a geometric partial comodule.

Remark that if one of the geometric partial comodules $X$ and $Y$ is global, then $X\bul Y=X\ot Y$. 
\end{remark}

\section{Partial comodule algebras and Hopf-Galois theory}\selabel{Galois}

In the partial case, there turn out to be two kinds of `comodule algebras': those which arise as partial comodules in the category of algebras, and those that arise as algebras in the (oplax monoidal) category of partial comodules. While these notions coincide in the global case, for partial coactions they differ, as a consequence of the fact that pushouts in the category of algebras are different from pushouts in the category of vector spaces. Throughout this section, we assume that $k$ is a field.

\subsection{Algebras in the category of partial comodules}\selabel{comodalg}

Let $\Cc$ be an oplax monoidal category and denote $\ot_0(\emptyset)=I$ and $\ot_n(X_1,\dots,X_n)=(X_1\ot \ldots \ot X_n)$. An algebra $\Cc$, is an object $A$ endowed with morphisms $m:(A\ot A)\to A$ and $u:I\to A$ satisfying the following conditions
\[
\xymatrix@!C{
& ((A)\ot (A\ot A)) \ar[r]^-{\iota\ot m} & (A\ot A)\ar[dr]^m\\
(A\ot A\ot A) \ar[ur]^{\gamma} \ar[dr]_\gamma &&& A\\
& ((A\ot A)\ot (A)) \ar[r]^-{m\ot \iota} & (A\ot A)\ar[ur]_m\\
}\]\[
\xymatrix{
& (A\ot I) \ar[r]^-{(A\ot u)} & (A\ot A)\ar[dr]^{m}\\
(A) \ar[dr]_{\gamma} \ar[ur]^{\gamma} \ar[rrr]^{\iota_A} &&& A\\
& (I\ot A) \ar[r]_-{(u\ot A)} & (A\ot A)\ar[ur]_{m}
}
\]
One can verify that just as in the case of algebras in usual strong monoidal categories, all higher associativity conditions follow form the one given here.

Then we obtain the following natural definitions.
\begin{definition}
Let $H$ be a $k$-bialgebra. 
A {\em quasi (resp.\ geometric) partial $H$-comodule algebra} is a an algebra in the oplax monoidal category of quasi (resp.\ geometric) partial $H$-comodules. 
\end{definition}

Since the forgetful functors $\gPMod^H\to\qPMod^H\to \Vect_k$ are monoidal, each geometric partial comodule  algebra is also a quasi partial comodule algebra and a quasi or geometric partial comodule algebra is also a $k$-algebra. More precisely, we have the following result.

\begin{proposition}\prlabel{comodalgisalgcomod}
Let $\Cc$ be a category satisfying the conditions of \prref{PCDmonoidal} and $H$ a Hopf algebra in $\Cc$.
If $(A,A\bul H,\pi_A,\rho_A)$ is an algebra in the monoidal category with oplax unit $\qPMod^H$, then $A$ and $A\bul H$ are algebras in $\Cc$ and the morphisms $\pi_A$ and $\rho_A$ are algebra morphisms.
\end{proposition}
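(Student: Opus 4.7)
The plan is to exploit that the forgetful functor $U \colon \qPMod^H \to \Cc$ coming from \prref{PCDmonoidal} is strict monoidal. Since strict monoidal functors preserve algebras, $A = U(A)$ automatically inherits an algebra structure with multiplication $m_A := U(m)$ and unit $u_A := U(u)$, where $u \colon \ul k \to A$ is the unit of the algebra $A$ in $\qPMod^H$ and $\ul k = (k, H, id_H, \eta)$ is the oplax monoidal unit. This handles the first half of the claim.

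To equip $A \bul H$ with an algebra structure I use the $\bul H$-components of the structure morphisms of $A$. Put
\[ \tilde m := (m \bul H) \circ \ol\mu \colon (A\bul H) \ot (A\bul H) \longrightarrow A \bul H, \]
where $\ol\mu \colon (A \bul H) \ot (A \bul H) \to (A \ot A) \bul H$ is the canonical pushout map from the construction of the tensor product in $\PCD^H$, and set $\tilde u := \rho_A \circ u_A \colon k \to A \bul H$. The key step is to verify that $\pi_A \colon A \ot H \to A \bul H$ is an algebra morphism from the ordinary tensor-algebra structure $m_{A \ot H} = (m_A \ot H) \circ \mu_{A,A}$ to $(A \bul H, \tilde m, \tilde u)$. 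For the multiplication, one chains the pushout identity $\pi_{A \ot A} \circ \mu_{A,A} = \ol\mu \circ (\pi_A \ot \pi_A)$ with the morphism property $\pi_A \circ (m_A \ot H) = (m \bul H) \circ \pi_{A \ot A}$ of $m$ in $\qPMod^H$ to get $\pi_A \circ m_{A \ot H} = \tilde m \circ (\pi_A \ot \pi_A)$. For the unit, unpacking $u \colon \ul k \to A$ as a morphism of partial comodule data (using $\pi_{\ul k} = id_H$ and $\rho_{\ul k} = \eta$) yields $u_{\bul H} = \pi_A \circ (u_A \ot H)$ and $\rho_A \circ u_A = u_{\bul H} \circ \eta$, so $\tilde u = \pi_A \circ (u_A \ot \eta)$, which is exactly the image under $\pi_A$ of the tensor-algebra unit of $A \ot H$.

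Associativity and unitality of $\tilde m$ on $A \bul H$ then follow by transporting the corresponding axioms from $A \ot H$ along the epimorphism $\pi_A$: under the standing assumption on $\Cc$ that tensoring preserves epimorphisms, every $\pi_A^{\ot n}$ is again epi, and applying $\pi_A$ to the associativity and unit identities for $m_{A \ot H}$ and using the algebra-morphism property of $\pi_A$ established above gives the analogous identities for $\tilde m$ after right-cancelling $\pi_A^{\ot n}$. Lastly, $\rho_A \colon A \to A \bul H$ is an algebra morphism essentially by construction: the unit axiom is $\rho_A \circ u_A = \tilde u$ by definition of $\tilde u$, while the multiplication axiom uses that $m$ is a morphism in $\qPMod^H$ together with the formula $\rho_{A \ot A} = \ol\mu \circ (\rho_A \ot \rho_A)$ coming from the construction of the tensor product in $\PCD^H$, giving $\rho_A \circ m_A = (m \bul H) \circ \rho_{A \ot A} = \tilde m \circ (\rho_A \ot \rho_A)$. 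The main obstacle in this proof will be the bookkeeping among the several different pushouts at play (those defining $A \bul H$, $(A \ot A) \bul H$, and the tensor product in $\PCD^H$) and noticing that associativity and unitality on $A \bul H$ are obtained not by a direct transport of the algebra axioms in $\qPMod^H$ but through the surjectivity of $\pi_A$.
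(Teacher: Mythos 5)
Your proposal is correct and follows essentially the same route as the paper: $A$ inherits its algebra structure through the strict monoidal forgetful functor, the multiplication on $A\bul H$ is $(\mu_A\bul H)\circ\ol\mu_{A,A}$ obtained from the pushout defining $(A\ot A)\bul H$ together with the morphism property of $\mu_A$, and the unit is $\rho_A\circ u$ with $\rho_A(1_A)=1_A\bul 1_H$. The only difference is that you spell out the associativity and unitality of $A\bul H$ by transporting the algebra axioms of $A\ot H$ along the epimorphism $\pi_A$, a verification the paper leaves to the reader.
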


\begin{proof}
We already remarked that $A$ is an algebra in $\Cc$, since the forgetful functor $\qPMod^H\to \Cc$ is monoidal. To see that $A\bul H$ is an algebra consider the following, which expresses that the multiplication $\mu:A\ot A\to A$ is a morphism of partial comodules, and the construction of $(A\ot A)\bul H$ as pushout.
\[
\xymatrix{
&& A\ot A \ar[rr]^-{\mu_A} \ar[d]^{\rho_{A\ot A}} \ar[dll]_{\rho_A\ot\rho_A} && A \ar[d]^{\rho_A}\\
(A\bul H)\ot (A\bul H) \ar[rr]^{\ol\mu_{A,A}} && (A\ot A)\bul H \ar[rr]^{\mu_A\bul H} \ar@{}[dl]|<<<{\pushoutw} && A\bul H\\
A\ot H\ot A\ot H  \ar[rr]^{\mu_{A,A}} \ar[u]_{\pi_A\ot \pi_A} && A\ot A\ot H \ar[rr]^{\mu_A\ot H} \ar[u]_{\pi_{A\ot A}} && A\ot H \ar[u]_{\pi_A}
}
\]
One can then easily verify that the morphism $(\mu_{A}\bul H)\circ \ol\mu_{A,A}$ defines an associative multiplication on $A\bul H$, and by construction $\pi_A$ and $\rho_A$ are then multiplicative.
In a similar way, the unit morphism $u:k\to A$ is a morphism of partial comodules if the following diagram commutes
\[
\xymatrix{
k \ar[d]^{\eta_H} \ar[rr]^-u && A \ar[d]^{\rho_A}\\
H \ar[rr]^{u\bul H} && A\bul H\\
H \ar@{=}[u] \ar[rr]^{u\ot H} &&  A\ot H \ar[u]_{\pi_A}
}
\]
which means in Sweedler notation that
\begin{equation}\eqlabel{rho1}
\rho_A(1_A)=1_A\bul 1_H
\end{equation}
Then $\rho_A\circ u:k\to A\bul H$ is a unit for the algebra $A\bul H$ and the morphisms $\pi_A$ and $\rho_A$ are unital.
\end{proof}

Again, since the functor $\gPMod^H\to\qPMod^H$ is monoidal, it follows that for a geometric partial comodule algebra $A$, the vector space $A\bul H$ is naturally a $k$-algebra and $pi_A$ is a $k$-algebra morhpism. This implies that $\ker\pi_A$ is a two-sided ideal in $A\ot H$.

\begin{remark}
In contrast to what might think naively, the $\Cc$-objects $(A\bul H)\bul H$, $A\bul (H\bul H)$ or $\Theta_A$ do not posses a natural algebra structure in general. The main reason for this, is that these objects are defined as pushouts in $\Cc$ without any interaction with the multiplication $\mu_A$. 
This is the main motivation to introduce a second type of comodule algebras in the next section.
\end{remark}

\begin{examples}\exlabel{comodulealg}
Let $A$ be a (global) $H$-comodule algebra, and $p:A\to B$ a surjective algebra morphism. Then just as in \exref{quotient}, we can endow $B$ with the structure of a (geometric) partial comodule. Taking however the pushout \equref{quotient} in the category of algebras, one endowes $B$ with the structure of a partial comodule algebra. Let us make this construction a bit more explicit in the following example.

Consider the example of geometric partial $k[x,y]$-comodule from \exref{affine}, whose underlying object is $B=k[x,y]/(xy)$. Then $B$ has a natural algebra structure, however $B\bul H=k[x,y,x',y']/\rho((xy))$ is not an algebra since $\rho((xy))$ is not an ideal. Hence, $(B,B\bul H,\pi_B,\rho_B)$ is not a partial comodule algebra. However, consider $k[x,y,x',y']/(\rho(xy))$,  where $(\rho(xy))$ is the ideal generated by $\rho((xy))$, then there is a surjective algebra morphism $\pi:B\bul H\to k[x,y,x',y']/(\rho(xy))$. The partial comodule datum $(B',B'\bul H,\pi_{B'},\rho_{B'})$, where $B'=B$, 
$B'\bul H=k[x,y,x',y']/(\rho(xy))$, $\pi_{B'}=\pi\circ \pi_B$ and $\rho_{B'}=\pi\circ \rho_B$ is a partial $k[x,y]$-comodule algebra. Since the partial comodule $B'$ is geometric (being a quotient of a global one), $B'$ is also a geometric comodule algebra.

In the same way, one can turn the partial comodule from \exref{affine} and \exref{quantum} into a geometric partial comodule algebra and the partial comodule from \exref{CJ} into quasi partial comodule algebra. For this last example, and using the notation of \exref{CJ}, it is easy to see that if $e\in A\ot H$ is central, then already has that $A\bul H=(A\ot H)e$ is an algebra, so the partial comodule datum of \exref{CJ} is already a partial comodule algebra.
\end{examples}

\subsection{Partial comodules in the category of algebras}

Recall that the category of algebras $\Alg(\Cc)$ in a braided monoidal category $\Cc$, is again a monoidal category. Furthermore, a coalgebra $H$ in $\Alg(\Cc)$ is exactly a bialgebra in $\Cc$ and a comodule over $H$ in $\Alg(\Cc)$ is exactly an $H$-comodule algebra in $\Cc$. 
Following this point of view, we introduce the following definitions. 

\begin{definition}
Let $H$ be a bialgebra in the braided monoidal category $\Cc$, and consider $H$ as a coalgebra in the category $\Alg(\Cc)$.
A {\em quasi (resp. lax, geometric) partial algebra-comodule} over $H$ is a quasi (resp.\ lax, geometric) partial $H$-comodule $(A,A\bul H,\pi_A,\rho_A)$ in $\Alg(\Cc)$. 
\end{definition}

\begin{remark}
In the global case, algebra-comodules and comodule-algebras are identical structures. In the partial setting this however is no longer the case.
Firstly, given a partial comodule datum $(A,A\bul H,\pi_A,\rho_A)$ in $\Alg(\Cc)$, 
then applying the forgetful functor $U:\Alg(\Cc)\to \Cc$ 
yields a partial $H$-comodule datum $UA=(A,A\bul H,\pi_A,\rho_A)$ in $\Cc$, provided that $U(\pi_A)$ is an epimorphism in $\Cc$. This last condition is not necessarily the case if we take $\Cc=\Mod_k$ where $k$ is an arbitrary commutative ring, but it holds if $k$ is a field. However, even in case of $\Cc=\Vect_k$, the forgetful functor $U:\Alg_k\to \Vect_k$ does not preserve pushouts. In other words, the canonical morphism $\Theta_{UA}\to U(\Theta_A)$ is not an isomorphism in general. If $A$ is a quasi partial $H$-comodule algebra, then coassociativity holds in $U\Theta_A$, but not necessarily in $\Theta_{UA}$, and $UA$ is not necessarily a 
quasi partial $H$-comodule.

The next result tells however that conversely, partial comodule-algebras are still algebra-comodules.
\end{remark}

\begin{proposition}\prlabel{comodalgisalgcomod2}
If $(A,A\bul H,\pi_A,\rho_A)$ is a quasi partial $H$-comodule algebra, then $(A,A\bul H,\pi_A,\rho_A)$ is also a quasi partial algebra-comodule over $H$.
\end{proposition}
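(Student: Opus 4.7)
My plan is to invoke \prref{comodalgisalgcomod}, which already delivers most of the work: it gives the algebra structure on $A\bul H$ and shows that $\pi_A$ and $\rho_A$ are algebra morphisms. Since $\pi_A$ is moreover surjective (being epi in $\Cc$), and surjective algebra morphisms are epimorphisms in $\Alg(\Cc)$, the tuple $(A,A\bul H,\pi_A,\rho_A)$ is a partial comodule datum in $\Alg(\Cc)$. The remaining task is to verify that the axioms [QPC1] and [QPC2] hold when the pushouts are interpreted in $\Alg(\Cc)$ rather than in $\Cc$.

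The key technical observation I will rely on is the relationship between pushouts in $\Cc$ and in $\Alg(\Cc)$: given an algebra surjection $f:Y\tto X$ with kernel the two-sided ideal $I=\ker f$ and an algebra morphism $g:Y\to Z$, the pushout in $\Cc$ is $Z/g(I)$, whereas the pushout in $\Alg(\Cc)$ is the quotient $Z/\langle g(I)\rangle$ by the ideal generated by $g(I)$. By universal property the $\Cc$-pushout carries a canonical $\Cc$-morphism $q:P^{\Cc}\to P^{\Alg(\Cc)}$, and $q$ is compatible with the structure maps $\rho_A\bul H$, $\pi'_A\circ(A\bul\Delta)$, $\theta_1$ and $\theta_2$ in the sense that the obvious squares commute, by uniqueness of the maps induced by universal property. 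Since $\pi_A$ is an algebra surjection, this framework applies to every pushout appearing in the axioms: $A\bul k$, $A\bul(H\ot H)$, $(A\bul H)\bul H$, $A\bul(H\bul H)$, and $\Theta_A$.

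For axiom [QPC1], since it already holds in $\Cc$, the $\Cc$-pushout $A\bul k$ is isomorphic to $A$, itself an algebra. This forces $(A\ot\epsilon)(\ker\pi_A)=0$ in $A\ot k$, so the relevant subspace is trivially a two-sided ideal and the $\Cc$- and $\Alg(\Cc)$-pushouts coincide. A direct check, using multiplicativity of $\epsilon$ and that $\pi_A$ is an algebra morphism, then shows that $A\bul\epsilon$ is an algebra morphism, so [QPC1] transfers. For axiom [QPC2], I would compose the identity
\[
\theta_1^{\Cc}\circ(\rho_A\bul H)^{\Cc}\circ\rho_A\;=\;\theta_2^{\Cc}\circ(\pi'_A)^{\Cc}\circ(A\bul\Delta)\circ\rho_A
\]
holding in $\Theta_A^{\Cc}$ with the canonical comparison $q_\Theta:\Theta_A^{\Cc}\to\Theta_A^{\Alg(\Cc)}$. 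The compatibility of $q_\Theta$ with $\theta_1^{\Alg(\Cc)}$, $\theta_2^{\Alg(\Cc)}$, $(\rho_A\bul H)^{\Alg(\Cc)}$ and $(\pi'_A)^{\Alg(\Cc)}$ then produces the corresponding identity in $\Theta_A^{\Alg(\Cc)}$, which is exactly [QPC2] in $\Alg(\Cc)$.

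The main obstacle will be the careful verification of the compatibility of the canonical comparison maps with each of the structure maps of the several pushouts involved; the potential confusion arises because the $\Cc$-pushouts need not themselves be algebras, so the comparison maps are genuinely only $\Cc$-morphisms and one has to argue the commuting squares using the universal property in the right category each time. Once this naturality bookkeeping is in place, both axioms follow formally from the versions already known in $\Cc$, so the proposition reduces essentially to an unpacking of \prref{comodalgisalgcomod} combined with the general principle that colimits in $\Alg(\Cc)$ are further quotients of colimits in $\Cc$.
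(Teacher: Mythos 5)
Your proposal is correct and follows essentially the same route as the paper: invoke \prref{comodalgisalgcomod} to obtain the partial comodule datum in $\Alg(\Cc)$, then use the universal property of the pushouts to produce a canonical comparison morphism from the coassociativity pushout $\Theta_A$ in $\Cc$ to the one in $\Alg(\Cc)$, and push the identity [QPC2] forward along it. Your explicit verification of [QPC1] (observing that $(A\ot\epsilon)(\ker\pi_A)=0$ so the two pushouts $A\bul k$ coincide) is a small point the paper leaves implicit, but it does not change the argument.
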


\begin{proof}
It follows from \prref{comodalgisalgcomod} that $(A,A\bul H,\pi_A,\rho_A)$ is a partial comodule datum in the category $\Alg(\Cc)$. 
If $A$ is a quasi partial comodule, then the coassociativity holds in the sense that 
\begin{equation}\eqlabel{coassA}
\theta_1\circ (\rho_A\bul H)\bul \rho_A=\theta_2\circ \ol{X\bul \Delta}\circ \rho_A
\end{equation}
where $(\Theta_A,\theta_1,\theta_2)$ is the coassociativity pushouts in $\Cc$. On the other hand, we can also consider the coassociativity pushout $(\Theta'_A,\theta'_1,\theta'_2)$ in $\Alg(\Cc)$. Since the forgetful functor $\Alg(\Cc)\to \Cc$ does not preserve pushouts, $\Theta_A$ and $\Theta'_A$ can be non-isomorphic objects in $\Cc$, but by the universal property of the pushouts $(A\bul H)\bul H$, $A\bul (H\bul H)$ and $\Theta_A$, we will obtain a morphism $\pi:\Theta_A\to\Theta'_A$. By composing both sides of \equref{coassA} with $\pi$, we find that the coassociativity also holds in $\Alg(\Cc)$, and hence $A$ is a quasi partial algebra-comodule.
\end{proof}

The difference between the pushouts in $\Alg(\Cc)$ and $\Cc$ can be understood very well in the situation where $\Cc=\Vect_k$. Indeed, consider $k$-algebra morphisms 
\[
\xymatrix{
& R \ar[dl]_a \ar[dr]^b\\
A && B
}
\]
where $b$ is surjective. Then we know from \leref{pushoutvect} that the pushout of $a$ and $b$ in $\Vect$ is given by the quotient $A/a(\ker b)$. In general (or more precisely, when $a$ is not surjective) $a(\ker b)$ is not an ideal in $A$, and hence $A/a(\ker b)$ is not an algebra. However, if we denote by $I$ the ideal generated by $a(\ker b)$, then one can easily see that $A/I$ is the pushout of $(a,b)$ in $\Alg_k$. 

As a consequence, we find the following.

\begin{corollary}
If $(A,A\bul H,\pi_A,\rho_A)$ is a geometric partial $H$-comodule $k$-algebra, then $(A,A\bul H,\pi_A,\rho_A)$ is also a geometric partial algebra-comodule over $H$.
\end{corollary}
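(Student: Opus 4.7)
My plan is as follows. By \prref{comodalgisalgcomod} and \prref{comodalgisalgcomod2}, we already know that $A\bul H$ carries a $k$-algebra structure for which $\pi_A$ is a surjective algebra morphism (so $\ker\pi_A$ is a two-sided ideal of $A\ot H$) and that $A$ is automatically a quasi partial algebra-comodule over $H$. What remains is therefore only to show that the induced morphism $\theta'_A\colon (A\bul H)\bul_{\Alg} H\to A\bul_{\Alg}(H\bul_{\Alg} H)$ between the two coassociativity pushouts computed in $\Alg_k$ is an isomorphism. The strategy will be to identify both of these pushouts with the same quotient algebra of $(A\bul H)\ot H$, using the geometricity of $A$ already established in $\Vect_k$.

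The first step will be to express both $\Alg_k$-pushouts concretely. Recall from the discussion preceding the corollary that, given an algebra morphism $a$ and a surjective algebra morphism $b$ out of a common source, the pushout in $\Alg_k$ is the pushout in $\Vect_k$ further quotiented by the two-sided ideal it generates. Applying this, and collapsing the iterated pushout that defines $A\bul_{\Alg}(H\bul H)$ by using that $\pi_A\ot H$ is a surjective algebra morphism, one obtains
\[
(A\bul H)\bul_{\Alg} H \;\cong\; \frac{(A\bul H)\ot H}{\langle K\rangle}, \qquad A\bul_{\Alg}(H\bul H) \;\cong\; \frac{(A\bul H)\ot H}{\langle K'\rangle},
\]
where $K=(\rho_A\ot H)(\ker\pi_A)$ and $K'=(\pi_A\ot H)(A\ot\Delta)(\ker\pi_A)$ are the respective kernels of the $\Vect_k$-projections $\pi_{A\bul H}$ and $\pi'_{A,\Delta}$ viewed as $k$-subspaces of $(A\bul H)\ot H$, and $\langle\,\cdot\,\rangle$ denotes the generated two-sided ideal.

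The second step will invoke the hypothesis that $A$ is geometric in $\Vect_k$. This means that the canonical morphism $\theta_A\colon (A\bul H)\bul H\to A\bul(H\bul H)$ in $\Vect_k$, uniquely characterised by $\theta_A\circ \pi_{A\bul H}=\pi'_{A,\Delta}$ as recorded in \reref{ParSweedler}, is an isomorphism. Since $\pi_{A\bul H}$ and $\pi'_{A,\Delta}$ are surjective, this is equivalent to the equality $K=\ker\pi_{A\bul H}=\ker\pi'_{A,\Delta}=K'$ of $k$-subspaces of $(A\bul H)\ot H$. Consequently $\langle K\rangle=\langle K'\rangle$ as two-sided ideals, the two $\Alg_k$-pushouts coincide as quotient algebras of $(A\bul H)\ot H$, and the induced morphism $\theta'_A$ is the identity on this common quotient, hence an isomorphism.

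The main obstacle will be the bookkeeping in the first step: verifying that the iterated $\Alg_k$-pushout defining $A\bul_{\Alg}(H\bul H)$ really collapses to the stated quotient by $\langle K'\rangle$. This amounts to the fact that a surjective algebra morphism carries the two-sided ideal generated by a subspace onto the two-sided ideal generated by its image, applied to $\pi_A\ot H$ and the subspace $(A\ot\Delta)(\ker\pi_A)$. Once this is in place, the conclusion that $\theta'_A$ is an isomorphism is a formal consequence of the subspace identity $K=K'$ furnished by $\Vect_k$-geometricity.
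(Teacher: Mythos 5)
Your proposal is correct and follows essentially the same route as the paper: both arguments reduce geometricity in $\Alg_k$ to the identity of the subspaces $(\rho_A\ot H)(\ker\pi_A)=(\pi_A\ot H)\circ(A\ot\Delta)(\ker\pi_A)$ of $(A\bul H)\ot H$ furnished by $\Vect_k$-geometricity, and then observe that equal subspaces generate equal two-sided ideals, so the two coassociativity pushouts in $\Alg_k$ coincide. The extra bookkeeping you flag about collapsing the iterated pushout is handled in the paper by the general remark that the $\Alg_k$-pushout along a surjection is the $\Vect_k$-pushout modulo the generated ideal, exactly as you propose.
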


\begin{proof}
By \prref{comodalgisalgcomod2} we know already that $A$ is a quasi partial algebra-comodule. On the other hand, since $A$ is geometric as comodule algebra, we find that the pushouts $(A\bul H)\bul H$ and $A\bul (H\bul H)$ are isomorphic in $\Vect_k$. Because of the explicit description of these pushouts recalled above, this means that the following subspaces of $(A\bul H)\ot H$ are isomorphic (even identical):
\[(\rho_A\ot H)(\ker \pi_A)=(\pi_A\ot H)\circ (A\ot \Delta)(\ker\pi_A)\]
Hence the ideals generated by these subspaces will also be the same, and therefore the corresponding coassociativity pushouts in $\Alg_k$ will be isomorphic, which means exactly that $A$ is geometric as a partial algebra-comodule.
\end{proof}

As the following examples illustrate, the converse of the previous corollary does not hold.

\begin{examples}
All examples from \exref{comodulealg} will give rise to examples of algebra-comodules. Since the examples obtained from \exref{affine} and \exref{quantum} are geometric as comodule-algebra, they are by the previous proposition also geometric as algebra-comodules. We remarked before that the example from \exref{CJ} is not geometric as comodule-algebra, however we will show now that is does become geometric as algebra-comodule. 

Let $A$ be a partial coaction of a Hopf algebra $H$ in the sense of \cite{CJ}. Consider as in \exref{CJ} the partial comodule datum $(A,A\bul H,\pi,\rho)$, where $A\bul H=\{a1_{[0]}\ot h1_{[1]}\}=(A\ot H)e$, which is a direct summand of $A\ot H$ and the left $A\ot H$-module generated by the idempotent $e=\rho(1)=1_{[0]}\ot 1_{[1]}$ and which can be seen as the quotient of $A\ot H$ by the left ideal $(A\ot H)e'$ where $e'=1\ot 1_H-e$ (we denote $1=1_A$ the unit of $A$). As explained in \exref{comodulealg}, in order to obtain a partial comodule algebra one has to consider an alternative partial comodule datum, where $A\bul' H$ is the quotient of $A\ot H$ by the two-sided ideal $(A\ot H)e'(A\ot H)$.

The ideal in $A\ot H\ot H$ generated by $(\rho\ot H)((A\ot H)e'(A\ot H))$ is then nothing else than the ideal generated by $(\rho\ot H)(e')$. Similarly, the ideal in $A\ot H\ot H$ generated by the image of $(A\ot H)e'(A\ot H)$ under $(\pi\ot H)\circ A\ot \Delta$ is the ideal generated by $(\pi\ot H)\circ (A\ot \Delta)(e')$. Using axiom (CJ2), we find 
\begin{eqnarray*}
(\rho\ot H)(e')&=& (\rho\ot H)(1\ot 1_H-e) = 1_{[0]}\ot 1_{[1]}\ot 1_H - 1_{[0][0]}\ot 1_{[0][1]}\ot 1_{[1]}\\
&=&1_{[0]}\ot 1_{[1]}\ot 1_H-1_{[0]}1_{[0']}\ot 1_{[1](1)}1_{[1']}\ot 1_{[1](2)}\\
&=&(\pi\ot H)\circ A\ot \Delta(e')
\end{eqnarray*}
Hence it follows that both elements generate the same ideals, which implies that $A$ is geometric as a partial algebra-comodule.
\end{examples}

\subsection{Partial Hopf modules}

Let $\Cc$ be an oplax monoidal category and denote as in \seref{comodalg} $\ot_0(\emptyset)=I$ and $\ot_n(X_1,\dots,X_n)=(X_1\ot \ldots \ot X_n)$. Let $(A,m,u)$ be an algebra in $\Cc$. Then a (right) $A$-module in $\Cc$ is an object $M$ endowed with a morphism $\mu_M:(M\ot A)\to M$
satisfying the following conditions
\[
\xymatrix@!C{
& ((M)\ot (A\ot A)) \ar[r]^-{\iota\ot m} & (M\ot A)\ar[dr]^{\mu_M}\\
(M\ot A\ot A) \ar[ur]^{\gamma} \ar[dr]_\gamma &&& M\\
& ((M\ot A)\ot (A)) \ar[r]^-{\mu_M\ot \iota} & (A\ot A)\ar[ur]_{\mu_M}\\
}\]\[
\xymatrix{
& (M\ot I) \ar[r]^-{(A\ot u)} & (M\ot A)\ar[dr]^{\mu_M}\\
(M)  \ar[ur]^{\gamma} \ar[rrr]^{\iota_A} &&& M\\
}
\]
Then we obtain the following natural definitions.
\begin{definition}
Let $H$ be a Hopf $k$-algebra and $(A,A\bul H,\pi_A,\rho_A)$ a quasi (resp. geometric) partial $H$-comodule algebra, i.e.\ an algebra in the lax monoidal category of quasi (resp. geometric) partial $H$-comodules. A 
{\em quasi (resp. geometric) partial $(A,H)$-relative Hopf module} is a right $A$-module in the lax monoidal category of quasi (resp. geometric) partial $H$-comodules. When $A$ and $H$ are clear from the context, we will just call this a partial Hopf module.

We will denote by $\PMod_A^H$ the category whose objects are quasi partial $(A,H)$-relative Hopf modules and whose morphisms are morphism of partial $H$-modules that are at the same time $A$-linear.
\end{definition}

As it is the case for partial comodule-algebras, since the forgetful functors $\gPMod^H\to\qPMod^H\to \Vect_k$ are monoidal, each geometric partial relative Hopf module is also a quasi partial relative Hopf module and a quasi or geometric partial relative Hopf module is also a module for the $k$-algebra $A$.

Let us make the previous definition a bit more explicit.
\begin{lemma}\lelabel{MbulH}
Let $(A,A\bul H,\pi_A,\rho_A)$ be  a quasi partial $H$-comodule algebra. A quasi partial $(A,H)$-relative Hopf module is a quasi partial $H$-comodule $(M,M\bul H,\pi_M,\rho_M)$ endowed with an $A$-module structure $\mu_M:M\ot A\to M$ such that the following compatibility conditions hold:
\begin{enumerate}[{[PRHM1]}]
\item $\ker(\pi_M\ot \pi_A)\subset \ker(\pi_M\circ\mu_{M\ot H})$;
\item $(ma)_{[0]}\bul (ma)_{[1]}=m_{[0]}a_{[0]}\bul m_{[1]}a_{[1]}$
for all $m\in M$ and $a\in A$.
\end{enumerate}
where \[\mu_{M\ot H}:M\ot H\ot A\ot H\to M\ot H,\ \mu_{M\ot H}((m\ot h)(a\ot k))=ma\ot hk.\]
is the induced $A\ot H$-module on $M\ot H$.

Under these conditions, $M\bul H$ is a right $A\bul H$-module.
\end{lemma}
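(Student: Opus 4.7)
The plan is to unfold the definition of a right $A$-module in the oplax monoidal category $\qPMod^H$ (as in \seref{comodalg}) and match the required data with the conditions [PRHM1] and [PRHM2]. Since the forgetful functor $\qPMod^H\to\Vect_k$ is strict monoidal with respect to the underlying vector space tensor product (by \prref{PCDmonoidal} and the discussion preceding the lemma) and faithful on morphisms (by \reref{fbulunique}), the associativity and unitality axioms for $\mu_M$ as an $A$-module in $\qPMod^H$ collapse to the standard $k$-linear $A$-module axioms on $M$, once we know that $\mu_M$ is a morphism in $\qPMod^H$. The real content of the lemma is therefore to characterize when a given $k$-linear $A$-action $\mu_M:M\ot A\to M$ is a morphism of partial comodules.

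The key tools are \leref{comodulemorphismabelian} and \reref{fbulunique}: the map $\mu_M$ is a morphism of partial comodule data if and only if $(\mu_M\ot H)(\ker\pi_{M\ot A})\subset \ker\pi_M$, which yields a unique lift $\mu_M\bul H:(M\ot A)\bul H\to M\bul H$, and moreover the diagram $\rho_M\circ\mu_M=(\mu_M\bul H)\circ \rho_{M\ot A}$ commutes. By \leref{pushoutvect} applied to the pushout defining $(M\ot A)\bul H$ in the proof of \prref{PCDmonoidal}, one computes $\ker\pi_{M\ot A}=\mu_{M,A}(\ker(\pi_M\ot \pi_A))$; a direct computation with the braiding gives $(\mu_M\ot H)\circ\mu_{M,A}=\mu_{M\ot H}$, so that the first condition becomes precisely [PRHM1]. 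Under this assumption, writing $\rho_{M\ot A}=\ol\mu_{M,A}\circ(\rho_M\ot\rho_A)$ as in the construction of the tensor product in $\PCD^H$ and unfolding the second condition in Sweedler notation yields exactly [PRHM2]. The converse direction is obtained by reading these derivations in reverse.

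For the final assertion, the $A\bul H$-module structure on $M\bul H$ is defined as
\[\mu_{M\bul H}:=(\mu_M\bul H)\circ\ol\mu_{M,A}:(M\bul H)\ot(A\bul H)\to M\bul H,\]
in parallel with the algebra structure on $A\bul H$ established in \prref{comodalgisalgcomod}. Unitality of this action is a short computation using \equref{rho1} applied to both $M$ and $A$, while associativity of $\mu_{M\bul H}$ follows from the module associativity of $\mu_M$ in $\qPMod^H$ via a diagram chase through the coherence isomorphism $((M\ot A)\ot A)\bul H\cong (M\ot(A\ot A))\bul H$ obtained in the proof of \prref{PCDmonoidal}.

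The expected main obstacle is precisely this final diagram chase: although the associativity for $\mu_{M\bul H}$ follows formally from the module associativity of $\mu_M$, it requires unpacking three layers of pushout constructions and exploiting that $-\ot X$ preserves pushouts in $\Vect_k$. Once this bookkeeping is handled, the remaining module axioms are routine consequences of the monoidality of the forgetful functor $\qPMod^H\to \Vect_k$.
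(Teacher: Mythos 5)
Your proposal is correct and follows essentially the same route as the paper: both identify $\ker\pi_{M\ot A}=\mu_{M,A}(\ker(\pi_M\ot\pi_A))$ via the pushout construction, invoke \leref{comodulemorphismabelian} to turn the existence of $\mu_M\bul H$ into [PRHM1], read off [PRHM2] as the coaction-compatibility square, and define $\mu_{M\bul H}=(\mu_M\bul H)\circ\ol\mu_{M,A}$. The only difference is that you flag the associativity of the $A\bul H$-action as requiring a coherence diagram chase, a point the paper leaves implicit.
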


\begin{proof}
Similarly as in the proof of \prref{comodalgisalgcomod}, consider the following diagram 
which expresses that the $A$-action $\mu_M:M\ot A\to M$ is a morphism of partial comodules, and the construction of $(M\ot A)\bul H$ as pushout.
\[
\xymatrix{
&& M\ot A \ar[rr]^-{\mu_M} \ar[d]^{\rho_{M\ot A}} \ar[dll]_{\rho_M\ot\rho_A} && M \ar[d]^{\rho_M}\\
(M\bul H)\ot (A\bul H) \ar[rr]^{\ol\mu_{M,A}} && (M\ot A)\bul H \ar[rr]^{\mu_M\bul H} \ar@{}[dl]|<<<{\pushoutw} && M\bul H\\
M\ot H\ot A\ot H  \ar[rr]^{\mu_{M,A}} \ar[u]_{\pi_M\ot \pi_A} && M\ot A\ot H \ar[rr]^{\mu_M\ot H} \ar[u]_{\pi_{M\ot A}} && M\ot H \ar[u]_{\pi_M}
}
\]
By construction, we know that 
\begin{eqnarray*}
\ker \pi_{M\ot A}&=&\mu_{M,A}(\ker (\pi_M\ot \pi_A))
\end{eqnarray*}
Then by \leref{comodulemorphismabelian}, in order for the linear map $\mu_M$ to be a morphism of partial $H$-comodules it is needed that $(\mu_M\ot H)(\ker \pi_{M\ot A})\subset \ker \pi_M$. Since $\mu_{M\ot H}=(\mu_M\ot H)\circ\mu_{M,A}$, this means furthermore that
\begin{eqnarray*}
\pi_M\circ (\mu_{M}\ot H) (\ker \pi_{M\ot A})
=\pi_M\circ \mu_{M\ot H} (\ker (\pi_M\ot \pi_A))=0
\end{eqnarray*}
or equivalently,
$\ker(\pi_M\ot \pi_A)\subset \ker(\pi_M\circ\mu_{M\ot H})$, i.e. [PRHM1] holds.

This condition implies that the map
\begin{eqnarray*}
&\mu_{M\bul H}=(\mu_{M}\bul H)\circ \ol\mu_{M,A}:(M\bul H)\ot (A\bul H)\to M\bul H,\\ 
&\mu_{M\bul H}((m\bul h)(a\bul k))=ma\bul hk
\end{eqnarray*}
is well-defined
and
defines an action of $A\bul H$ on $M\bul H$. 
Moreover, 
$\mu_M$ will be a morphism of partial $H$-modules if and only if
\[\rho_M\circ\mu_M= \mu_{M\bul H}\circ (\rho_M\ot\rho_A)\]
which gives exactly condition [PRHM2].
\end{proof}

\begin{example}
Clearly any algebra in a lax monoidal category is a module over itself, hence any 
quasi partial $H$-comodule algebra $(A,A\bul H,\pi_A,\rho_A)$
is also a quasi partial $(A,H)$-relative Hopf module.
\end{example}

The following observation will be useful later.

\begin{lemma}\lelabel{kerpi}
If $M$ is a quasi partial $(A,H)$-relative Hopf module, then \[\pi_M(ma\ot h)=0\] for all
\begin{enumerate}[(i)]
\item
$m\ot h\in \ker\pi_M$ and $a\in A$;
\item
$m\in M$ and $a\ot h\in \ker \pi_A$.
\end{enumerate}
Consequently, the isomorphism $M\ot_A(A\ot H)\cong M\ot H$ induces an surjective map $M\ot_A(A\bul H)\to M\bul H$.
\end{lemma}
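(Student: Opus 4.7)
The key tool is axiom [PRHM1] of \leref{MbulH}, which states $\ker(\pi_M\ot\pi_A)\subset \ker(\pi_M\circ\mu_{M\ot H})$. For part (i), the plan is to feed the element $(m\ot h)\ot (a\ot 1_H)\in M\ot H\ot A\ot H$ into this inclusion: since $\pi_M(m\ot h)=0$, we have $(\pi_M\ot\pi_A)((m\ot h)\ot(a\ot 1_H))=0$, so [PRHM1] yields $\pi_M\circ\mu_{M\ot H}((m\ot h)\ot(a\ot 1_H))=\pi_M(ma\ot h\cdot 1_H)=\pi_M(ma\ot h)=0$. For part (ii), symmetrically, I would feed in $(m\ot 1_H)\ot(a\ot h)$, where now the right factor vanishes under $\pi_A$; applying [PRHM1] and computing $\mu_{M\ot H}((m\ot 1_H)\ot(a\ot h))=ma\ot h$ gives the claim. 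Since $\ker\pi_A$ is a subspace rather than a set of simple tensors, one reads the statement in (ii) linearly: for any finite sum $\sum a_i\ot h_i\in\ker\pi_A$ and any $m\in M$, the same argument gives $\sum \pi_M(ma_i\ot h_i)=0$.

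For the consequence, I would use the standard $A$-module isomorphism $\psi:M\ot_A(A\ot H)\xrightarrow{\cong} M\ot H$, $m\ot_A(a\ot h)\mapsto ma\ot h$, where $A\ot H$ is regarded as a left $A$-module by multiplication in the first tensorand. By \prref{comodalgisalgcomod} we know $A\bul H$ is a $k$-algebra and $\pi_A$ is an algebra map; composing with $a\mapsto a\ot 1_H$ gives an algebra map $A\to A\bul H$, $a\mapsto a\bul 1_H$, which in particular makes $\pi_A$ an epimorphism of left $A$-modules. Right exactness of $M\ot_A-$ then produces a surjection $M\ot_A(A\ot H)\twoheadrightarrow M\ot_A(A\bul H)$ with kernel generated by $M\ot_A\ker\pi_A$. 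The composite $\pi_M\circ\psi^{-1}:M\ot_A(A\ot H)\to M\bul H$ is what (ii) precisely tells us to vanish on this kernel, so it descends to the desired surjective map $M\ot_A(A\bul H)\to M\bul H$; surjectivity is inherited from $\pi_M$.

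There is no real obstacle here: everything is a linear, almost formal application of [PRHM1], once the correct test elements $(m\ot h)\ot(a\ot 1_H)$ and $(m\ot 1_H)\ot(a\ot h)$ are identified in (i) and (ii). The only thing worth checking carefully is that the $A$-action on $A\bul H$ used for the quotient tensor product $M\ot_A(A\bul H)$ is compatible with $\pi_A$, but this is automatic from the fact that $A\to A\ot H\xrightarrow{\pi_A}A\bul H$ is a composition of algebra maps and all actions considered are induced by left multiplication through these maps.
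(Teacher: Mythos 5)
Your proof is correct and follows essentially the same route as the paper: both parts (i) and (ii) are obtained by feeding the test elements $m\ot h\ot a\ot 1_H$ and $m\ot 1_H\ot a\ot h$ into axiom [PRHM1] via the decomposition $\ker(\pi_M\ot\pi_A)=\ker\pi_M\ot A\ot H+M\ot H\ot\ker\pi_A$, and the final surjection is deduced from part (ii) together with the canonical isomorphism $M\ot_A(A\ot H)\cong M\ot H$ and right exactness of $M\ot_A-$. Your additional remarks on reading (ii) linearly on sums of simple tensors and on the compatibility of the left $A$-action with $\pi_A$ are correct and merely make explicit what the paper leaves implicit.
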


\begin{proof}
Since $\ker(\pi_M\ot \pi_A)=\ker\pi_M\ot A\ot H+M\ot H\ot\ker \pi_A$, we know that for all $m\ot h\in \ker\pi_M$ and $a\in A$, $m\ot h\ot a\ot 1_H\in \ker(\pi_M\ot \pi_A)$. Hence by axiom (PRHM1), we find that $\pi_M(ma\ot h)=0$. Similarly, for $m\in M$ and $a\ot h\in \ker \pi_A$, we have $m\ot 1_H\ot a\ot h\in \ker(\pi_M\ot \pi_A)$ and we can follow the same reasoning.

For the last statement, it follows from part (ii) that $M\ot_A(\ker\pi_A)$ is included in $\ker\pi_M$ via the canonical isomorphism $M\ot_A(A\ot H)\cong M\ot H$. Consequently, we obtain the stated surjection $M\ot_A(A\bul H)\to M\bul H$.
\end{proof}

\subsection{Hopf-Galois theory}

\begin{definition}
Let $H$ be a Hopf $k$-algebra, $(A,A\bul H,\pi_A,\rho_A)$ a quasi partial $H$-comodule algebra and $(M,M\bul H,\pi_M,\rho_M,\mu_M)$ a quasi partial $(A,H)$-relative Hopf module. The {\em $H$-coinvariants} of $M$ are defined as the following equalizer in $\Vect_k$
\begin{equation}\eqlabel{coinv}
\xymatrix{M^{coH} \ar[rr] && M \ar@<.5ex>[rr]^-{\rho_M} \ar@<-.5ex>[rr]_-{\pi_M\circ (M\ot\eta_H)} && M\bul H}
\end{equation}
i.e.\ $M^{coH}=\{m~|~\rho_M(m)=m\bul 1_H\}$.
\end{definition}

\begin{proposition}
Let $H$ be a Hopf $k$-algebra, $(A,A\bul H,\pi_A,\rho_A)$ a quasi partial $H$-comodule algebra.
\begin{enumerate}[(i)]
\item The coinvariants $A^{coH}$ of $A$ form a subalgebra of $A$;
\item The coinvariants $M^{coH}$ of a quasi partial $(A,H)$-relative Hopf module $M$ form a module over $A^{coH}$; 
\item This induces a functor $(-)^{coH}:\PMod^H_A\to \Mod_{A^{coH}}$.
\end{enumerate}
\end{proposition}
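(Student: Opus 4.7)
The plan is to verify the three statements by routine computations once the definitions are unpacked, invoking the right pieces of machinery in the correct order. The essential ingredients at our disposal are: from \prref{comodalgisalgcomod}, the object $A\bul H$ is a $k$-algebra and $\rho_A:A\to A\bul H$ is a unital algebra morphism with multiplication $(a\bul h)(b\bul k)=ab\bul hk$; from \leref{MbulH}, the coaction on a partial Hopf module satisfies the Sweedler-type identity $(ma)_{[0]}\bul (ma)_{[1]}=m_{[0]}a_{[0]}\bul m_{[1]}a_{[1]}$, and $M\bul H$ is a right $A\bul H$-module with $(m\bul h)(a\bul k)=ma\bul hk$; and from \reref{fbulunique}, a morphism of partial comodules $f$ is completely determined by its underlying linear map via $(f\bul H)\circ\pi_M=\pi_N\circ(f\ot H)$.

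For (i), I would first observe that $1_A\in A^{coH}$ by identity \equref{rho1} of \prref{comodalgisalgcomod}, namely $\rho_A(1_A)=1_A\bul 1_H$. For closure under multiplication, given $a,b\in A^{coH}$, applying the algebra morphism $\rho_A$ gives $\rho_A(ab)=\rho_A(a)\rho_A(b)=(a\bul 1_H)(b\bul 1_H)=ab\bul 1_H$, placing $ab$ in $A^{coH}$. For (ii), for $m\in M^{coH}$ and $a\in A^{coH}$, axiom [PRHM2] yields $\rho_M(ma)=m_{[0]}a_{[0]}\bul m_{[1]}a_{[1]}=(m\bul 1_H)(a\bul 1_H)=ma\bul 1_H$, using the $A\bul H$-action on $M\bul H$, so $ma\in M^{coH}$. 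Hence the $A$-action on $M$ restricts to an $A^{coH}$-action on $M^{coH}$, and a symmetric argument works if one prefers to record left module structures.

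For (iii), given a morphism $f:M\to N$ in $\PMod^H_A$ and $m\in M^{coH}$, the compatibility square of \reref{fbulunique} gives
\[\rho_N(f(m))=(f\bul H)(\rho_M(m))=(f\bul H)(\pi_M(m\ot 1_H))=\pi_N(f(m)\ot 1_H)=f(m)\bul 1_H,\]
so $f$ restricts to a $k$-linear map $f^{coH}:M^{coH}\to N^{coH}$, which is $A^{coH}$-linear because $f$ is $A$-linear. Functoriality (preservation of composition and identities) is immediate from the fact that the restriction is just the set-theoretic restriction of $f$.

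The only subtlety in the entire argument is to interpret the Sweedler expressions correctly in the pushout $M\bul H$ rather than in $M\ot H$; this is handled by reading $m\bul 1_H$ as $\pi_M(m\ot 1_H)$ in line with \reref{ParSweedler}, and by using the equalizer definition \equref{coinv} which makes precise the equation $\rho_M(m)=m\bul 1_H$. I do not anticipate any real obstacle beyond this book-keeping.
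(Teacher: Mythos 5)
Your proposal is correct and follows essentially the same route as the paper: part (ii) is verbatim the paper's computation $\rho_M(ma)=m_{[0]}a_{[0]}\bul m_{[1]}a_{[1]}=(m\bul 1_H)(a\bul 1_H)=ma\bul 1_H$, and part (iii), which the paper dismisses as ``easily verified'', is exactly the restriction argument you spell out. The only cosmetic difference is in (i), where the paper's primary argument is categorical (the two legs of the equalizer \equref{coinv} are algebra morphisms and the forgetful functor $\Alg_k\to\Vect_k$ creates limits), but it explicitly offers your direct computation as an equivalent alternative.
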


\begin{proof}
\ul{(i)}.
Since $\rho_A$ and $\pi_A\circ (A\ot\eta_H)$ are both algebra morphisms and the forgetful functor $U:\Alg_k\to \Vect_k$ creates limits, $A^{coH}$ is a subalgebra of $A$. Alternatively, this follows by a direct computation similar to the next part.\\
\ul{(ii)}. Consider the restriction of the multiplication map $\mu_M:M^{coH}\ot A^{coH}\to M,\ \mu_M(m\ot a)= ma$. Then 
\[\rho(ma)=m_{[0]}a_{[0]}\bul m_{[1]}a_{[1]}=(m_{[0]}\bul m_{[1]})(a_{[0]}\bul a_{[1]})=(m\bul 1_H)(a\bul 1_H)=(ma\bul 1_H)\]
hence $\rho_M(ma)\in M^{coH}$.\\
\ul{(iii)}. This part is easily verified.
\end{proof}

Let us remark that the coinvariant functor is representable.

\begin{lemma}\lelabel{coinvHom}
For any  quasi partial $(A,H)$-relative Hopf module, we have that 
\[M^{coH}\cong \Hom_A^H(A,M),\]
the set of partial $H$-comodule morphisms from $A$ into $M$ that are right $A$-linear.
\end{lemma}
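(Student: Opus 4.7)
The plan is to mimic the classical Hopf-module argument, constructing a pair of inverse maps between $M^{coH}$ and $\Hom_A^H(A,M)$ that send $f\mapsto f(1_A)$ and $m\mapsto f_m$, where $f_m:A\to M$ is defined by $f_m(a)=ma$. The only real subtleties are to verify that these maps land where they should, using the partial-comodule axioms and the results already established in the excerpt.

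First, for $f\in\Hom_A^H(A,M)$, I would check $f(1_A)\in M^{coH}$. Since $f$ is a morphism of partial comodules, $\rho_M\circ f=(f\bul H)\circ\rho_A$, so evaluating at $1_A$ and using equation \equref{rho1} (i.e.\ $\rho_A(1_A)=1_A\bul 1_H$, established in the proof of \prref{comodalgisalgcomod}) gives $\rho_M(f(1_A))=(f\bul H)(1_A\bul 1_H)=f(1_A)\bul 1_H$, as required.

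Conversely, for $m\in M^{coH}$ the map $f_m:A\to M,\ a\mapsto ma$ is clearly right $A$-linear. To check it is a morphism of partial comodules I would invoke \leref{comodulemorphismabelian}: it suffices to verify $(f_m\ot H)(\ker\pi_A)\subset\ker\pi_M$, and this is exactly the content of \leref{kerpi}(ii), since $\pi_M(ma\ot h)=0$ whenever $a\ot h\in\ker\pi_A$. The induced map $f_m\bul H:A\bul H\to M\bul H$ is then $a\bul h\mapsto ma\bul h$, and the comodule-morphism square commutes by the computation
\begin{linenomath}
\[\rho_M(ma)=m_{[0]}a_{[0]}\bul m_{[1]}a_{[1]}=(m\bul 1_H)(a_{[0]}\bul a_{[1]})=ma_{[0]}\bul a_{[1]}=(f_m\bul H)(\rho_A(a)),\]
\end{linenomath}
using axiom [PRHM2] of \leref{MbulH} together with $m\in M^{coH}$.

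Finally, the two assignments are mutually inverse: $f_m(1_A)=m\cdot 1_A=m$, while for $f\in\Hom_A^H(A,M)$ the $A$-linearity gives $f_{f(1_A)}(a)=f(1_A)\cdot a=f(1_A\cdot a)=f(a)$. The main potential obstacle is the well-definedness of $f_m\bul H$, but this is fully absorbed into \leref{kerpi}, so no further work on pushouts or the coassociativity pushout $\Theta$ is needed. The argument is entirely parallel to the classical Hopf-module case, with \leref{kerpi} playing the role that makes the partial coaction interact well with the $A$-action.
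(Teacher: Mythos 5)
Your proof is correct and follows essentially the same route as the paper's: both directions use the same assignments $f\mapsto f(1_A)$ and $m\mapsto f_m$, the well-definedness of $f_m\bul H$ is reduced to \leref{kerpi}(ii) exactly as in the paper, and the comodule-morphism check for $f_m$ is the same computation via [PRHM2] and $\rho_A(1_A)=1_A\bul 1_H$. Your explicit verification that the two maps are mutually inverse is slightly more detailed than the paper's (which asserts it without writing out $f_{f(1_A)}=f$), but the argument is identical in substance.
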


\begin{proof}
Let $f:A\to M$ be a right $A$-linear morphism of partial $H$-comodules. Then the following diagram commutes
\[
\xymatrix{
A \ar[d]_{\rho_A} \ar[rr]^-f && M \ar[d]^{\rho_M}\\
A\bul H \ar[rr]^-{f\bul H} && M\bul H\\
A\ot H \ar[u]^{\pi_A} \ar[rr]^-{f\ot H} && M\ot H \ar[u]_{\pi_M}
}
\]
Since we know that $\rho_A(1_A)=\pi_A(1_A\ot 1_H)$, it follows from the commutativity of this diagram that $f(1_A)\in M^{coH}$. Conversely, given any $m\in M^{coH}$ we clearly have that the map $f_m:A\to M,\ f_m(a)=ma$ is right $A$-linear. Moreover, thanks \leref{kerpi}(ii), we also know that $(f_m\ot H)(\ker\pi_A)\subset \ker\pi_M$. Hence the morphism $f_m\bul H:A\bul H\to M\bul H,\ f_m(a\bul h)=ma\bul h$ is well-defined. Finally, we find for any $a\in A$ that 
\begin{eqnarray*}
\rho_M(f_m(a))&=&\rho_M(ma)=\rho_M(m)\rho_A(a)\\
&=&(m\bul 1_H)(a_{[0]}\bul a_{[1]})= ma_{[0]}\bul a_{[1]}\\
& =& f_m\bul H(a_{[0]}\bul a_{[1]})
\end{eqnarray*}
I.e. $f_m\in\Hom_A^H(A,M)$.
The above constructions provide well-defined mutual inverses between $M^{coH}$ and $\Hom_A^H(A,M)$.
\end{proof}

\begin{proposition}\prlabel{Galoisadjunction}
For any $A^{coH}$-module $N$, the right $A$-module $N\ot_{A^{coH}}A$ can be endowed with the structure of a quasi partial $(A,H)$-relative Hopf module by means of the following partial $H$-comodule structure 
\begin{eqnarray*}
\pi_{N\ot_{A^{coH}}A}= N\ot_{A^{coH}}\pi_A:&&N\ot_{A^{coH}}A\ot H \to N\ot_{A^{coH}}(A\bul H)=: (N\ot_{A^{coH}}A)\bul H\\
\rho_{N\ot_{A^{coH}}A}=N\ot_{A^{coH}}\rho_A:&&N\ot_{A^{coH}}A \to N\ot_{A^{coH}}(A\bul H)
\end{eqnarray*}
Moreover, this construction yields a functor $-\ot_{A^{coH}}A:\Mod_{A^{coH}}\to \PMod^H_A$ that is a left adjoint to the coinvariant functor $(-)^{coH}:\PMod^H\to \Mod_{A^{coH}}$. 
\end{proposition}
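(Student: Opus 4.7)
The plan is to proceed in three steps: (1) construct the partial Hopf module structure on $N\ot_{A^{coH}}A$; (2) check functoriality; (3) establish the adjunction via the representability result \leref{coinvHom}.

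First I would set the stage by making $A$ into a left $A^{coH}$-module via the subalgebra inclusion and observing, using $\rho_A(c)=c\bul 1_H$ for $c\in A^{coH}$ together with the fact (from \prref{comodalgisalgcomod}) that $\pi_A$ and $\rho_A$ are $k$-algebra morphisms, that both $\pi_A:A\ot H\to A\bul H$ and $\rho_A:A\to A\bul H$ are $A^{coH}$-linear, where $A\bul H$ is viewed as an $A$-module via $\rho_A$. Therefore $\pi_{N\ot_{A^{coH}}A}:=N\ot_{A^{coH}}\pi_A$ and $\rho_{N\ot_{A^{coH}}A}:=N\ot_{A^{coH}}\rho_A$ are well-defined, and the former remains an epimorphism. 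The crucial observation is that the functor $N\ot_{A^{coH}}-$, being a left adjoint on ${}_{A^{coH}}\Mod$, preserves colimits; applying it to the defining pushouts for $A\bul(H\bul H)$, $(A\bul H)\bul H$ and $\Theta_A$ yields exactly the analogous pushouts for the datum on $N\ot_{A^{coH}}A$. Consequently, tensoring with $N$ converts the counit and coassociativity identities [QPC1], [QPC2] for $A$ into the corresponding identities for $N\ot_{A^{coH}}A$, and in the lax or geometric case the isomorphism $\theta_A$ is transported to $\theta_{N\ot_{A^{coH}}A}$.

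Next, with the obvious right $A$-action $(n\ot a)\cdot b=n\ot ab$, I would verify the relative Hopf module axioms of \leref{MbulH}. Axiom [PRHM2] is an immediate consequence of the corresponding identity for $A$:
\[\rho_{N\ot_{A^{coH}}A}((n\ot a)b)=n\ot(ab)_{[0]}\bul(ab)_{[1]}=n\ot a_{[0]}b_{[0]}\bul a_{[1]}b_{[1]},\]
which matches the right-hand side of [PRHM2] under the identification $(N\ot_{A^{coH}}A)\bul H=N\ot_{A^{coH}}(A\bul H)$. For [PRHM1], one notes that $\ker\pi_{N\ot_{A^{coH}}A}$ is the image of $N\ot_{A^{coH}}\ker\pi_A$ in $N\ot_{A^{coH}}(A\ot H)$, and that the condition then reduces to [PRHM1] for $A$ acting on itself. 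Functoriality of $-\ot_{A^{coH}}A$ on morphisms of $A^{coH}$-modules is then immediate.

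For the adjunction, I would combine the tensor--hom identity $\Hom_A(N\ot_{A^{coH}}A,M)\cong \Hom_{A^{coH}}(N,\Hom_A(A,M))=\Hom_{A^{coH}}(N,M)$ with \leref{coinvHom}, which identifies $M^{coH}$ with $\Hom_A^H(A,M)$. Concretely, to $f:N\ot_{A^{coH}}A\to M$ in $\PMod^H_A$ I would associate $\bar f(n)=f(n\ot 1_A)$, and check that $\bar f(n)\in M^{coH}$ using $\rho_M(f(n\ot 1_A))=(f\bul H)\rho_{N\ot_{A^{coH}}A}(n\ot 1_A)=(f\bul H)((n\ot 1_A)\bul 1_H)=f(n\ot 1_A)\bul 1_H$, where the key input is $\rho_A(1_A)=1_A\bul 1_H$ from \equref{rho1}. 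Conversely, for $g:N\to M^{coH}$ I define $\tilde g(n\ot a)=g(n)\cdot a$, and verify that $\tilde g$ is right $A$-linear and a partial $H$-comodule morphism by providing $\tilde g\bul H:N\ot_{A^{coH}}(A\bul H)\to M\bul H$ via $n\ot(a\bul h)\mapsto g(n)\cdot(a\bul h)$, well-defined over $A^{coH}$ because $\rho_A(c)=c\bul 1_H$ for $c\in A^{coH}$. The two assignments are mutually inverse and natural in $N$ and $M$.

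The step I expect to require most care is the first one: showing that the partial comodule axioms really do descend through $N\ot_{A^{coH}}-$. The subtlety is that the pushouts defining $(A\bul H)\bul H$, $A\bul(H\bul H)$ and $\Theta_A$ are formed in $\Vect_k$ and involve $\pi_A$, so one needs $\pi_A$ to be $A^{coH}$-linear (hence the careful setup using $\rho_A(c)=c\bul 1_H$ for $c\in A^{coH}$) in order that tensoring produces the correct pushouts for $N\ot_{A^{coH}}A$; after that, cocontinuity of $N\ot_{A^{coH}}-$ does all the work.
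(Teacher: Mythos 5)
Your proof is correct and follows essentially the same route as the paper: the key inputs are identical (\leref{kerpi}(ii) for well-definedness of the induced maps on $-\bul H$, the identity $\rho_A(1_A)=1_A\bul 1_H$ from \equref{rho1}, and multiplicativity of $\rho_A$), and your hom-set bijection built from \leref{coinvHom} and tensor--hom is just the paper's unit--counit presentation ($\zeta_M(m\ot a)=ma$, $\nu_N(n)=n\ot 1_A$) written in adjoint-transpose form. The only substantive difference is that you spell out, via cocontinuity of $N\ot_{A^{coH}}-$ applied to the defining pushouts, the verification that $N\ot_{A^{coH}}A$ really is a quasi partial relative Hopf module --- a step the paper dismisses as ``clear from the statement''.
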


\begin{proof}
The construction of the functor $-\ot_{A^{coH}}A:\Mod_{A^{coH}}\to \PMod^H_A$ is clear from the statement. To verify the adjunction property, we will define a counit $\zeta$ and a unit $\nu$. For any quasi partial $(A,H)$-relative Hopf module $M$ we define
\begin{eqnarray*}
\zeta_M:M^{coH}\ot_{A^{coH}}A \to M&& \zeta_M(m\ot_{A^{coH}}a)=ma
\end{eqnarray*}
Clearly, $\zeta_M$ is a right $A$-linear map. Let us check that it is also a morphism of partial $H$-comodules. Firstly, we need to verify that $\pi_M \circ (\zeta_M\ot H)(\ker \pi_{M^{coH}\ot_{A^{coH}}A}) = 0$ (see \leref{comodulemorphismabelian}). 
Since by construction $\ker \pi_{M^{coH}\ot_{A^{coH}}A}=M^{coH}\ot_{A^{coH}}\ker\pi_A$, 
this follows directly by \leref{kerpi}(ii).
Secondly, we should check that $\rho_M\circ \zeta_M=(\zeta_M\bul H)\circ (M^{coH}\ot_{A^{coH}}\rho_A)$, where we know that $\zeta_M\bul H$ is well-defined by the first part. Indeed, take any $m\ot a\in M^{coH}\ot_{A^{coH}A}$ then 
\begin{eqnarray*}
\rho_M(\zeta_M(m\ot a))&=&(ma)_{[0]}\bul (ma)_{[1]} = (m_{[0]}\bul m_{[1]})(a_{[0]}\bul a_{[1]})\\
&=& (m\bul 1_H)(a_{[0]}\bul a_{[1]})=ma_{[0]}\bul a_{[1]}\\
&=& \zeta_M(m\ot_{A^{coH}}a_{[0]})\bul a_{[1]}=(\zeta_M\bul H)(m\ot_{A^{coH}}\rho_A(a)).
\end{eqnarray*}
On the other hand, for any $A^{co H}$-module $N$, we define 
\[\nu_N:N\to (N\ot_{A^{coH}}A)^{coH},\ \nu_N(n)=n\ot 1_A.\]
Since $A$ is an algebra in the category of partial $H$-modules we have that $\rho_A(1)\in A^{coH}$ (see \equref{rho1}).
It is now easily verified that $\zeta$ and $\nu$ are indeed the counit and unit for this adjunction.
\end{proof}

\begin{remarks}
If $A$ is lax (respectively geometric) as partial $H$-comodule, then we find for any $N\in\Mod_{A^{coH}}$ that the partial Hopf module $N\ot_{A^{coH}}A$ constructed in the previous proposition is also lax (resp.\ geometric).

Let $\iota:B\to A^{coH}$ be any ring morphism, then of course the adjunction from \prref{Galoisadjunction} can be combined with the extension-restriction of scalars functors, to obtain a pair of adjoint functors
\[-\ot_BA: \Mod_{B}\rightleftarrows \PMod^H_A:(-)^{coH}.\]
\end{remarks}

\begin{definition}
Let $A$ be a quasi partial $H$-comodule algebra and $B\to A^{coH}$ a ring morphism.
We call the morphism $\iota:B\to A$ a partial Hopf-Galois extension if and only if the following canonical map is bijective
\[\can:A\ot_B A\to A\bul H, \can(a\ot_B a')=aa'_{[0]}\bul a'_{[1]}\]
Remark that here $aa'_{[0]}\bul a'_{[1]}$ denotes the product $(a\bul 1_H)(a'_{[0]}\bul a'_{[1]})$ which is well-defined since $m:A\ot A\to A$ is a morphism of partial $H$-comodules.
\end{definition}

The following examples show how partial Hopf-Galois extensions can be interpreted as ``partial principle bundles''.

\begin{example}
Let $A$ be a global $H$-comodule algebra, and suppose that $A/A^{coH}$ is Galois, i.e.\ the canonical map $A\ot_{A^{coH}}A\to A\ot H$ is bijective. Consider a surjective algebra morphism $p:A\to B$ and endow $B$ with the induced structure of a partial comodule algebra. 
Then we obtain a canonical algebra morphism $A^{coH}\to B^{coH}$ and in fact $B^{coH}\cong A^{coH}/(A^{coH}\cap \ker p)$. We find that the following diagram commutes
\[
\xymatrix{
A\ot_{A^{coH}}A \ar@{->>}[d]^{p\ot p} \ar[rr]^-{\can_A} && A\ot H \ar@{->>}[d]^{\pi_B\circ (p\ot H)} \\
B\ot_{B^{coH}}B \ar[rr]^-{\can_B} && B\bul H
}
\]
If $\can_A$ is an isomorphism, it is clear that $\can_B$ is surjective. Moreover, consider any $b\ot b'\in \ker\can_B$. Since $p\ot p$ is surjective, we can write $b\ot b'=p(a)\ot p(a')$, such that $\can(a\ot a')\in \ker \pi_B\circ (p\ot H)$. From the construction of $B\bul H$ as a pushout, we know that $\ker\pi_B = (p\ot H)\circ \rho_A(\ker p)$. Hence, we find that
$\can(a\ot a')=u_{[0]}\ot u_{[1]} + v\ot h$ for some $u\in \ker p$ and $v\ot h\in \ker p\ot H$.
Applying $\can^{-1}$ to this equation, we obtain that $a\ot a'=1\ot u + \can^{-1}(v\ot h)$. Moreover, since $\can$ is left $A$-linear, $\can^{-1}$ is left $A$-linear as well and therefore $\can^{-1}(v\ot h)=v\can^{-1}(1\ot h)\in \ker p\ot_{A^{coH}} A$, since $v\in\ker p$ which is an ideal in $A$ as $p$ is an algebra map. We can conlude that $a\ot a'\in A\ot_{A^{coH}}\ker p+ \ker p\ot_{A^{coH}}A$ 
and therefore $b\ot b'=p(a)\ot p(a')=0$. So $\can_B$ is bijective as well, i.e.\ $B$ is partially Hopf Galois. 
\end{example}

\begin{example}
It is known that if an algebraic group $G$ acts strictly transitive on an algebraic space $X$ (i.e. $X$ is a principal homogeneous $G$-space), then the coordinate algebra $A=\Oo(G)$ is $\Oo(G)$-Galois with trivial coinvariants. 
If we take any subvariety $Y\subset X$ then we know that $\Oo(Y)$ will be a partial $\Oo(G)$-comodule algebra. Applying the previous example, we find that that $\Oo(Y)$ will be partially Hopf-Galois.
For example, the partial comodule algebras from \exref{affine}, \exref{quantum} (see \exref{comodulealg}) provide examples of partial principle homogeneous $G$-spaces (where $G$ is respectively $k[x,y]$ and $k\bk{x,y}$). 

More generally, $X$ is a principal $\Oo(G)$-bundle if and only if $\Oo(X)$ is an $\Oo(G)$-Galois extension (with possible non-trivial coinvariants), see e.g.\ \cite{Schneider}. Again, any subvariety $Y$ of the principle bundle $X$ will give rise to a partial principle bundle. 
\end{example}

In the global case, we know that if $A/A^{coH}$ is Hopf-Galois and $A$ is faithfully flat as left $A^{coH}$-module, then the category of relative $(A,H)$-Hopf modules is equivalent to the category of $A^{coH}$-modules. 
This theorem has a nice interpretation in terms of corings, as this equivalence factors through the category of comodules over the canonical Sweedler $A$-coring $A\ot_BA$ and the canonical map is an $A$-coring morphism. For more details we refer to e.g.\ \cite{Cae:descent}, where it is also pointed out that the faithful flatness is in fact too strong.

Since in the partial setting it follows from earlier observations in this paper that the category of partial comodules is not abelian, the category of partial relative $(A,H)$-Hopf modules cannot be expected to be equivalent with a module category. Nevertheless, let us show that under the same mild conditions as in the global case, we can characterize when the functor $-\ot_{A^{coH}}A:\Mod_{A^{coH}}\to \PMod_A^H$ is fully faithful. The following is an adaptation of the approach from \cite{Cae:descent} (see also \cite{CDV}).

Recall that a morphism of left $B$-modules $f:N\to M$ is called {\em pure} if and only if for any right $B$-module $P$, the map $P\ot_B f:P\ot_BN\to P\ot_B M$ is injective. In particular, if $\iota:B\to A$ be a ring morphism, then $\iota$ is said to be pure (as left $B$-module morphism) if for any right $B$-module $P$ the map $\iota_P:P\to P\ot_BA,\ \iota_P(p)=p\ot_B 1_A$ is injective.

\begin{lemma}
Let $\iota:B\to A$ be a ring morphism. Then the following statements are equivalent:
\begin{enumerate}[(i)]
\item $\iota$ is pure as left $B$-module morphism 
\item For any right $B$-module $N$, the fork
\begin{equation}\eqlabel{Nfork}
\xymatrix{N\ar[rr]^-{\iota_N} && N\ot_BA \ar@<.5ex>[rr]^-{\iota_N\ot_BA} \ar@<-.5ex>[rr]_-{N\ot_B\iota_A} && N\ot_BA\ot_BA}
\end{equation}
is an equalizer in $\Mod_B$.
\end{enumerate}
In particular, if $A$ is faithfully flat as left $B$-module, then $A$ is left pure.
\end{lemma}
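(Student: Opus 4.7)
The implication (ii) $\Rightarrow$ (i) is immediate: the equalizing morphism of any fork in $\Mod_B$ is monic, so if \equref{Nfork} is an equalizer for every right $B$-module $N$, then each $\iota_N$ is injective, which is exactly purity.

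For (i) $\Rightarrow$ (ii), my plan is to use the cokernel of $\iota_N$ as a test object. Given purity, the first map $\iota_N$ is injective by definition. Writing $\pi : N\otimes_B A \tto P$ for the cokernel of $\iota_N$, we obtain a short exact sequence $0 \to N \to N\otimes_B A \to P \to 0$ in $\Mod_B$. I plan to apply $-\otimes_B A$ to $P$ and exploit the commutativity of the diagram obtained by post-composing the two parallel maps of \equref{Nfork} with $\pi\otimes_B A : N\otimes_B A\otimes_B A \to P\otimes_B A$. The direct computations $(\pi\otimes_B A)\circ(\iota_N\otimes_B A) = 0$ (because $\pi\circ\iota_N=0$) and $(\pi\otimes_B A)\circ(N\otimes_B\iota_A) = \iota_P\circ\pi$ identify the two composites. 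Consequently, for any $x\in N\otimes_B A$ satisfying $(\iota_N\otimes_B A)(x) = (N\otimes_B\iota_A)(x)$, one concludes $\iota_P(\pi(x))=0$, and purity applied to the right $B$-module $P$ forces $\pi(x)=0$, i.e.\ $x\in\ker\pi = \iota_N(N)$. The reverse inclusion $\iota_N(N)\subseteq$ equalizer is trivial since both $(\iota_N\otimes_B A)\iota_N$ and $(N\otimes_B\iota_A)\iota_N$ send $n$ to $n\otimes 1\otimes 1$.

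For the final ``in particular'' clause: if $A$ is faithfully flat as a left $B$-module, then $-\otimes_B A$ is faithful and exact, hence reflects injectivity. It therefore suffices to check that $\iota_N\otimes_B A : N\otimes_B A \to N\otimes_B A\otimes_B A$ is injective, and this is clear because it admits a left inverse given by the multiplication map $N\otimes_B A\otimes_B A\to N\otimes_B A$, $n\otimes a\otimes a'\mapsto n\otimes aa'$ (whose composition with $\iota_N\otimes_B A$ sends $n\otimes a$ to $n\otimes 1\cdot a = n\otimes a$).

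The main delicate point is the key identity $(\pi\otimes_B A)\circ(N\otimes_B\iota_A)=\iota_P\circ\pi$; once this is in place, purity of $\iota$ applied to the cokernel $P$ (rather than to $N$ itself) cleanly yields the equalizer property. Everything else is direct verification, and no completeness hypothesis beyond plain purity is needed.
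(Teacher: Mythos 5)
Your proof is correct and follows essentially the same route as the paper: both directions, including the key step of applying purity to the quotient $P$ of $N\ot_BA$ by $\iota_N(N)$ and using the identity $(\pi\ot_BA)\circ(N\ot_B\iota_A)=\iota_P\circ\pi$, match the paper's argument (your choice of $P$ as the cokernel of $\iota_N$ is the clean reading of the paper's $E/\iota_N(N)$). Your explicit verification of the ``in particular'' clause via the multiplication splitting is a standard addition that the paper leaves unstated.
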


\begin{proof}
$\ul{(i)\Rightarrow(ii)}$
Denote by $E$ the equalizer of \equref{Nfork}, and define $P=E/\iota_N(N)$. Take any $e\in E$, the we can write $e=n_i\ot_B a_i$ and $n_i\ot_B 1_A\ot_B a_i = n_i\ot_B a_i \ot_B 1_A$. Apply $\pi\ot_BA$ to this identity, then we have that 
$\iota_P(\pi(e))=\pi(e)\ot_B1_A = \pi(n_i\ot_B a_i) \ot_B 1_A = \pi(n_i\ot_B 1_A)\ot_B a_i=0$, since $n_i\ot_B1_A\in \iota_N(N)$. Since $\iota_P$ is injective, it follows that
$\pi(e)=\pi(n_i\ot_B a_i)=0$ in $P=E/\iota(N)$, hence $n_i\ot_B a_i\in \iota(N)$. \\
$\ul{(ii)\Rightarrow(i)}$. Since \equref{Nfork} is an equalizer, we have in particular that $\iota_N$ is injective.
\end{proof}

\begin{proposition}\prlabel{unitiso}
Let $\iota:B\to A$ be a partial $H$-Galois extension, then the functor $-\ot_BA:\Mod_B\to \PMod_A^H$ is fully faithful if and only if $\iota$ is pure.

Under these conditions, $B\cong A^{coH}$.
\end{proposition}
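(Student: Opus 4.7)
The plan is to apply the standard categorical fact that a left adjoint is fully faithful if and only if the unit of the adjunction is a natural isomorphism, and then to recognise this unit as the descent fork from the preceding lemma transported through the Galois isomorphism.

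First I would compose the adjunction from \prref{Galoisadjunction} with the extension/restriction of scalars adjunction along $\iota:B\to A^{coH}$. This yields an adjunction $-\ot_B A \dashv (-)^{coH}:\PMod_A^H\to \Mod_B$ whose unit at a right $B$-module $N$ is the morphism $\nu_N:N\to (N\ot_B A)^{coH},\ n\mapsto n\ot_B 1_A$. Consequently $-\ot_B A$ is fully faithful precisely when $\nu_N$ is bijective for every $N$.

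Next, unwind the definition of the coinvariants of the partial Hopf module $N\ot_B A$ (whose partial comodule structure from \prref{Galoisadjunction} has $(N\ot_B A)\bul H = N\ot_B(A\bul H)$ with $\pi_{N\ot_B A}=N\ot_B\pi_A$ and $\rho_{N\ot_B A}=N\ot_B\rho_A$): by \equref{coinv} the object $(N\ot_B A)^{coH}$ is the equaliser in $\Vect_k$ of $N\ot_B\rho_A$ and $N\ot_B(\pi_A\circ(A\ot\eta_H))$, both from $N\ot_B A$ to $N\ot_B(A\bul H)$. Now invoke the Galois hypothesis via the two computations
\begin{eqnarray*}
\can(1_A\ot_B a) &=& 1_A\cdot a_{[0]}\bul a_{[1]} \;=\; \rho_A(a),\\
\can(a\ot_B 1_A) &=& a\cdot \rho_A(1_A) \;=\; a\cdot \pi_A(1_A\ot 1_H) \;=\; \pi_A(a\ot 1_H),
\end{eqnarray*}
where the second line uses \equref{rho1} together with multiplicativity of $\pi_A$ (\prref{comodalgisalgcomod}). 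Applying $N\ot_B-$ to the bijection $\can^{-1}$ and transporting the equaliser, one identifies $(N\ot_B A)^{coH}$ with the equaliser of the pair $\iota_N\ot_B A,\ N\ot_B\iota_A:N\ot_B A\rightrightarrows N\ot_B A\ot_B A$ appearing in \equref{Nfork}, and simultaneously identifies $\nu_N$ with the canonical map $\iota_N:N\to N\ot_B A$.

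Once this translation is in place, the preceding lemma directly yields the equivalence: $\nu_N$ is an isomorphism for every $N\in\Mod_B$ if and only if the fork \equref{Nfork} is an equaliser for every $N$, if and only if $\iota$ is pure. Finally, specialising to $N=B$ gives $B\cong (B\ot_B A)^{coH}=A^{coH}$ via the composite $\nu_B=\iota$. The main technical point is the identity $\can(a\ot_B 1_A)=\pi_A(a\ot 1_H)$, which crucially relies on $\pi_A$ being an algebra map, i.e.\ on the comodule-algebra (not merely algebra-comodule) hypothesis; once established, the rest is a routine bookkeeping of the universal property of the equaliser under the bijection $N\ot_B\can^{-1}$.
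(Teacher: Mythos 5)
Your proposal is correct and follows essentially the same route as the paper: identify the unit of the composite adjunction with the map $N\to(N\ot_BA)^{coH}$, transport the coinvariants equalizer through $N\ot_B\can$ onto the descent fork \equref{Nfork}, and invoke the preceding lemma on purity together with the fact that a left adjoint is fully faithful iff its unit is an isomorphism. The only difference is that you spell out the two computations $\can(1_A\ot_Ba)=\rho_A(a)$ and $\can(a\ot_B1_A)=\pi_A(a\ot 1_H)$ that make the comparison diagram commute, which the paper leaves implicit.
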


\begin{proof}
Consider the following commutative diagram
\[
\xymatrix{
N \ar[rr]^-{N\ot_{B}\iota} \ar[d]_-{\nu_N} && N\ot_{B} A  \ar@<.5ex>[rrr]^-{N\ot_{B}\iota\ot_{B} A } \ar@<-.5ex>[rrr]_-{N\ot_{B}A\ot_{B}\iota} \ar@{=}[d] &&& N\ot_{B}A\ot_{B}A \ar[d]^-{N\ot_{B}\can}\\
(N\ot_{B}A)^{co H} \ar[rr] && N\ot_{B}A \ar@<.5ex>[rrr]^-{\rho_{N\ot_{B}A}=N\ot_B\rho_A} \ar@<-.5ex>[rrr]_-{N\ot_BA\bul\eta_H} &&& 
N\ot_{B}A\bul H
}
\]
The lower row is an equalizer by the definition of the coinvariants $(N\ot_BA)^{coH}$. Since $\can$ is an isomorphism, it then follows that the upper row is an equalizer if and only if $\nu_N$ is an isomorphism. 
The upper row in the above diagram is exactly \equref{Nfork}. 
By the previous lemma, this means that $\iota:B\to A$ is pure if and only if the unit $\nu$ of the adjunction from \prref{Galoisadjunction} is a natural isomorphism, i.e.\ $-\ot_BA$ is fully faithful.

For the last statement, just remark that $\nu_B:B\to (B\ot_BA)^{coH}=A^{coH}$ is an isomorphism.
\end{proof}

As we have remarked before, since partial comodules do not provide an abelian category, one cannot expect that the functor $-\ot_BA:\Mod_{B}\to \PMod_A^H$ is an equivalence in general. The following observation shows that as soon as $H$ is non-trivial, this functor will never be an equivalence.
Indeed, by construction any induced partial Hopf module $M=N\ot_{B}A$ satisfies $M\bul H=M\ot_A(A\bul H)$. It follows however from \leref{MbulH} that in general we only have an surjection $M\ot_A(A\bul H)\tto M\bul H$. This motivates the following definition.

\begin{definition}
A partial relative Hopf module $M$ is called {\em minimal} iff $M\bul H=M\ot_A(A\bul H)$. 
\end{definition}

\begin{example}
Let $A$ be a partial $H$-coaction as in \exref{CJ}, considered as a partial comodule algebra, see \exref{comodulealg}. Then we have that 
\[A\bul H=\{a1_{[0]}\ot h1_{[1]}~|~a\ot h\in A\ot H\}\]
where $e=1_{[0]}\ot 1_{[1]}$ is supposed to be central in $A\ot H$.
Let $M$ be a relative Hopf module in the sense of \cite{CJ}, that is $M$ is a right $A$-module, endowed with a coaction $\rho:M\to M\ot H,\ \rho(m)=m_{[0]}\ot m_{[1]}$ satisfying 
\begin{itemize}
\item $m=m_{[0]}\epsilon(m_{[1]})$;
\item $\rho(m_{[0]})\ot m_{[1]}=m_{[0]}1_{[0]}\ot m_{[1](1)}1_{[1]}\ot m_{[1](2)}$;
\item $\rho(ma)=m_{[0]}a_{[0]}\ot m_{[1]}a_{[1]}$.
\end{itemize}
Then by defining $M\bul H=\{m1_{[0]}\ot h1_{[1]}~|~ m\ot h\in M\ot H\}$ we find that $M$ can be endowed with the structure of a partial Hopf module in the sense defined here. Moreover, 
one then easily checks that this partial Hopf module is minimal:
\[M\bul H\cong M\ot_A(A\bul H),\ m1_{[0]}\ot h1_{[1]}\mapsto m\ot_A(1_{[0]}\ot h1_{[1]}).\]
\end{example}

A key tool in \cite{CJ} is the observation that for a partial action as in the previous example, the $A$-bimodule $A\bul H$ is an $A$-coring, whose comodules correspond exactly with the partial relative Hopf modules. We now extend this result to the present setting.

\begin{lemma}\lelabel{coring}
Let $A$ be a quasi partial $H$-comodule algebra. 
\begin{enumerate}[(i)]
\item There is a canonical epimorphism $p:(A\bul H)\bul H\to (A\bul H)\ot_A (A\bul H)$;
\item For any minimal partial Hopf module $M$, we have a canonical epimorphism $p_M:(M\bul H)\bul H\to (M\bul H)\ot_A(A\bul H)$.
\end{enumerate}
Consequently, if $A$ is geometric as $H$-comodule, then $\Cc=A\bul H$ is an $A$-coring and $\can:A\bul H\to A\ot_BA$ is a morphism of $A$-corings. 
Moreover, there is a functor from the category of geometric minimal partial Hopf modules to the category of $\Cc$-comodules.
\end{lemma}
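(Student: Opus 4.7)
For part (i), I plan to construct $p$ via the universal property of the pushout $(A\bul H)\bul H$, which by definition is the pushout of $\pi_A:A\ot H\to A\bul H$ and $\rho_A\ot H:A\ot H\to (A\bul H)\ot H$. The cocone on $(A\bul H)\ot_A(A\bul H)$ I will use consists of $g:(A\bul H)\ot H\to (A\bul H)\ot_A(A\bul H)$, $(a\bul h)\ot k\mapsto (a\bul h)\ot_A(1\bul k)$, and $f:A\bul H\to (A\bul H)\ot_A(A\bul H)$, $x\mapsto 1_{A\bul H}\ot_A x$. The cocone compatibility amounts to $1\ot_A(a\bul h)=(a_{[0]}\bul a_{[1]})\ot_A(1\bul h)$, which follows by moving $a$ across $\ot_A$ using the $A$-bimodule structure on $A\bul H$ from \prref{comodalgisalgcomod} (indeed $1_{A\bul H}\cdot a=\rho_A(a)$ and $a\cdot(1\bul h)=a\bul h$). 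The resulting $p$ satisfies $p((a\bul h)\bul k)=(a\bul h)\ot_A(1\bul k)$, and surjectivity is immediate since $(a\bul h)\ot_A(b\bul k)=p((ab_{[0]}\bul hb_{[1]})\bul k)$.

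Part (ii) follows the same pattern with cocone maps $g_M((m\bul h)\ot k)=(m\bul h)\ot_A(1\bul k)$ and $f_M(m\bul h)=\rho_M(m)\ot_A(1\bul h)$, but well-definedness of $f_M$ is the essential step and the main obstacle. I will lift to $\tilde f_M:M\ot H\to (M\bul H)\ot_A(A\bul H)$ by $m\ot h\mapsto \rho_M(m)\ot_A(1\bul h)$. Axiom [PRHM2] gives $\tilde f_M(ma\ot h)=\rho_M(m)\rho_A(a)\ot_A(1\bul h)=\rho_M(m)\ot_A(a\bul h)$. The minimality of $M$, via \leref{kerpi}, means that $\ker\pi_M=M\ot_A\ker\pi_A$ as subspaces of $M\ot H\cong M\ot_A(A\ot H)$. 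Hence every element of $\ker\pi_M$ is a sum of terms of the form $m_j\cdot u_j$ with $u_j\in\ker\pi_A$, and $\tilde f_M$ sends such a sum to $\sum_j\rho_M(m_j)\ot_A\pi_A(u_j)=0$. Thus $f_M$ descends, the cocone exists, and universality yields $p_M$; surjectivity is argued as in (i).

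For the coring structure, assuming $A$ geometric, I take $\epsilon_\Cc(a\bul h)=a\epsilon_H(h)$, which is well-defined by [QPC1]. The naive choice $p\circ(\rho_A\bul H)$ would give $\Delta_\Cc(a\bul h)=1\ot_A(a\bul h)$, which fails the counit axiom; instead I use the \emph{other} geometric partial comodule structure on $A\bul H$ from \exref{XbulHcomodule}, namely the assignment $a\bul h\mapsto(a\bul h_{(1)})\bul h_{(2)}$ into $(A\bul H)\bul H$. This is well-defined in the geometric case because $\theta_A$ encodes the equality $(\rho_A\ot H)(\ker\pi_A)=(\pi_A\ot H)\circ(A\ot\Delta)(\ker\pi_A)$ inside $(A\bul H)\ot H$. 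Composing with $p$ yields $\Delta_\Cc(a\bul h)=(a\bul h_{(1)})\ot_A(1\bul h_{(2)})$, whose counit and coassociativity axioms reduce to those of $H$ via Sweedler notation. For $\can:A\ot_BA\to A\bul H$, $A$-bilinearity and the counit condition $\epsilon_\Cc\can(a\ot_Bb)=ab$ are routine, while the equality $\Delta_\Cc\circ\can=(\can\ot_A\can)\circ\Delta_{\mathrm{Sw}}$ reduces after moving $A$ across the middle tensor to $(b_{[0][0]}\bul b_{[0][1]})\ot_A(1\bul b_{[1]})=(b_{[0]}\bul b_{[1](1)})\ot_A(1\bul b_{[1](2)})$, which follows by applying $p$ to [QPC2] for $A$ evaluated at $b$.

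Finally, for the functor, a geometric minimal partial Hopf module $M$ yields an isomorphism $M\bul H\cong M\ot_A(A\bul H)$ from minimality; composing $\rho_M$ with its inverse produces $\rho_M^\Cc:M\to M\ot_A(A\bul H)$, explicitly $\rho_M^\Cc(m)=m_{[0]}\ot_A(1\bul m_{[1]})$. Right $A$-linearity uses [PRHM2] together with $(1\bul m_{[1]})\rho_A(a)=a_{[0]}\bul m_{[1]}a_{[1]}$ and passage of $a_{[0]}$ across $\ot_A$; the counit axiom is [QPC1]; coassociativity of $\rho_M^\Cc$ follows by applying $p_M$ from (ii) to axiom [QPC2] for $M$ and using minimality to identify $(M\bul H)\ot_A(A\bul H)$ with $M\ot_A(A\bul H)\ot_A(A\bul H)$. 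Morphisms of partial Hopf modules are $A$-linear and intertwine the partial coactions, hence automatically yield morphisms of $\Cc$-comodules, giving the desired functor.
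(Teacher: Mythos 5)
Your proof is correct and follows essentially the same route as the paper: $p$ (resp.\ $p_M$) is obtained by checking that $(a\bul h)\ot k\mapsto (a\bul h)\ot_A(1\bul k)$ annihilates $\ker\pi_{A\bul H}=(\rho_A\ot H)(\ker\pi_A)$ (resp.\ annihilates $(\rho_M\ot H)(\ker\pi_M)$, using minimality together with \leref{kerpi} and [PRHM2]), and the coring structure is $A\bul\epsilon$ together with the comultiplication induced by $\ol{A\bul\Delta}$ through the coassociativity isomorphism and $p$, with $\can$ a coring morphism via [QPC2]. Your direct element-wise verification in part (ii) is in fact somewhat more careful than the paper's, which derives the second epimorphism by formally applying $M\ot_A-$ to the diagram of part (i).
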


\begin{proof}
\ul{(i)}.
Consider the following diagram.
\[
\xymatrix{
(A\bul H)\ot H  \ar[rr]^-{\pi_{A\bul H}} \ar[d]^{\phi}_\cong && (A\bul H)\bul H \ar[r] & 0 \\
(A\bul H)\ot_A (A\ot H) \ar[rr]^-{(A\bul H)\ot_A\pi_A} && (A\bul H)\ot_A (A\bul H) \ar[r] &0
}
\]
where $\phi$ is the isomorphism given by $\phi((a\bul h)\ot h)=(a\bul h)\ot_A (1_A\ot h)$.
We know that $\ker((A\bul H)\ot_A\pi_A)= (A\bul H)\ot_A\ker\pi_A$ and
$\ker\pi_{A\bul H}=(\rho_A\ot H)(\ker\pi_A)$. Consider any $a\ot h\in\ker\pi_A$. Then we find that
\begin{eqnarray*}
(a_{[0]}\bul a_{[1]})\ot_A (1_A\ot h) = (1_A\bul 1_H)\cdot a\ot_A (1_A\ot h)= (1_A\bul 1_H)\ot_A (a\ot h)
\end{eqnarray*}
Hence we find that $\phi(\ker\pi_{A\bul H})\subset \ker((A\bul H)\ot_A\pi_A)$. Consequently, $\phi$ induces an epimorphism  $p:(A\bul H)\bul H\to (A\bul H)\ot_A (A\bul H)$.\\
\ul{(ii)}. By part (i), we know that the following diagram commutes
\[
\xymatrix{
& A\ot H \ar@{->>}[dl]_{\pi_A} \ar[dr]^{\rho_A\ot H} \\
A\bul H \ar[dr]_{p\circ (\rho_A\bul H)} && (A\bul H)\ot H \ar@{->>}[dl]^{(id\ot_A\pi_A)\circ \phi}\\
& (A\bul H)\ot_A (A\bul H)
}
\]
Applying the functor $M\ot_A-$ to this diagram, and using $M\ot_A(A\bul H)=M\bul H$, we find that the following diagram commutes as well (we avoid naming the arrows to not overload the diagram)
\[
\xymatrix{
& M\ot H \ar@{->>}[dl] \ar[dr] \\
M\bul H \ar[dr] && (M\bul H)\ot H \ar@{->>}[dl]\\
& (M\bul H)\ot_A (A\bul H)
}
\]
Hence, by the universal property of the pushout, we obtain an epimorphism $(M\bul H)\bul H\to (M\bul H)\ot_A (A\bul H)$.
\\
Using the above, we can define a coring structure on $A\bul H$ by means of the counit $A\bul \epsilon$ and comultiplication $p\circ (A\bul \Delta)$, and one can easily observe that $\can$ is a coring morphism for this structure. Similarly, for a minimal partial Hopf module $M$, the partial $H$-coaction is also an $A\bul H$-coaction since $\rho_M:M\to M\bul H=M\ot_A(A\bul H)$.
Furthermore, it is enough to remark that for a geometric partial Hopf module, the coassociativity holds in $M\bul H\bul H$. If $M$ is minimal then by the above $(M\bul H)\ot_A(A\bul H)$ is a quotient of $M\bul H\bul H$, so coassociativity also holds there.
\end{proof}

\begin{remark}
Given a comodule $M$ over the coring $A\bul H$, one can construct a partial comodule datum $(M,M\ot_A(A\bul H),M\ot_A \pi_A,\rho_M)$. However, one cannot expect that all such comodule data provide partial $H$-comodules. Indeed, if $A/B$ is Galois, then the canonical map is a coring isomorphism $A\ot_BA\cong A\bul H$. and the categories $A\ot_BA$-comodules and $A\bul H$-comodules are isomorphic as well. When $A$ is flat as left $B$-module, then the category of $A\ot_BA$-comodules is abelian. 
If $\Mod^{A\bul H}$ coincides with the category of partial Hopf modules, than this would imply that the latter category is Abelian, which we know is not the case.
\end{remark}

If $A$ is flat as left $B$-module, then the functor $-\ot_BA$ preserves all equalizers.
Recall from \cite{Cae:descent} (see \cite{CDV} for a corrected version of the statement) that the functor $-\ot_BA$ preserves the equalizers of the form \equref{coinv} provided $A$ is pure as left $A$-module and $B$ lies in the center of $A$.

\begin{proposition}\prlabel{counitiso}
Let $M$ be a partial Hopf module $M$. 
\begin{enumerate}[(i)]
\item If $\zeta_M$ (counit of the adjunction \prref{Galoisadjunction}) is an isomorphism, then $M$ is minimal.
\item
If $A/B$ is partially Hopf-Galois and $\zeta_M$ is a monomorphism, then $M$ is minimal.
\item
If $M$ is minimal and geometric, $A$ is partially Hopf Galois and the functor $-\ot_BA$ preserves the equalizers of the form \equref{coinv} (e.g. $A$ is flat as left $B$-module, or $A$ is pure as left $A$-module and $B\subset Z(A)$), then $\zeta_M$ is an isomorphism.
\end{enumerate}
\end{proposition}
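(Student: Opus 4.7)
For part (i), suppose $\zeta_M:M^{coH}\otimes_B A\to M$ is an isomorphism in $\PMod_A^H$. Then $M$ is isomorphic to an induced partial Hopf module $N\otimes_B A$ with $N=M^{coH}$. By the very construction of the partial coaction on induced modules in \prref{Galoisadjunction}, $(N\otimes_B A)\bul H = N\otimes_B(A\bul H)$, and this coincides canonically with $(N\otimes_B A)\otimes_A(A\bul H)$. Under this identification the natural surjection $p_M$ of \leref{kerpi} is the identity, so $M$ is minimal.

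For part (ii), assume that $A/B$ is partially Hopf-Galois with translation map $\tau := \can^{-1}:A\bul H\to A\otimes_B A$, and that $\zeta_M$ is a monomorphism. Using $\can$ and the left $A$-linearity of $\tau$, one identifies $M\otimes_A(A\bul H)$ with $M\otimes_B A$ and transports the surjection $p_M$ to a surjection $q_M:M\otimes_B A\twoheadrightarrow M\bul H$, $m\otimes_B a\mapsto ma_{[0]}\bul a_{[1]}$. The plan is to prove $q_M$ injective by producing a left inverse out of $\tau$: given $\xi = \sum_i m_i\otimes_B a_i \in \ker q_M$, apply $\tau$ pointwise to the identities $\sum_i m_i a_{i[0]}\bul a_{i[1]}=0$ and use the defining identities of the translation map (analogous to $h^{\{1\}}h^{\{2\}}_{[0]}\bul h^{\{2\}}_{[1]} = 1\bul h$) to rewrite $\xi$ as an element in the image of $\zeta_M \otimes_B A$; the monomorphism hypothesis on $\zeta_M$ then forces $\xi=0$.

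For part (iii), assume $M$ is minimal and geometric, $A/B$ is partially Hopf-Galois, and $-\otimes_B A$ preserves the equalizer \equref{coinv}. By \leref{coring}, $A\bul H$ carries an $A$-coring structure and $M$ is a comodule over it; via $\can$ this transports to a descent datum on $M$, i.e.\ an $A\otimes_B A$-comodule structure. Minimality together with the Hopf-Galois isomorphism identifies \equref{coinv} with the equalizer
\[\xymatrix@R=.3cm{M^{coH}\ar[r] & M \ar@<.4ex>[r]\ar@<-.4ex>[r] & M\otimes_B A,}\]
whose parallel pair consists of the descent coaction $\tilde\rho_M$ and the map $m\mapsto m\otimes_B 1_A$. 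Applying $-\otimes_B A$, which preserves this equalizer by hypothesis, yields an equalizer
\[\xymatrix@R=.3cm{M^{coH}\otimes_B A\ar[r] & M\otimes_B A \ar@<.4ex>[r]\ar@<-.4ex>[r] & M\otimes_B A\otimes_B A.}\]
On the other hand, classical descent theory shows that the descent datum on $M$ makes $M\to M\otimes_B A\rightrightarrows M\otimes_B A\otimes_B A$ itself into an equalizer of the same parallel pair, the coaction $\tilde\rho_M$ providing the requisite section. Comparing the two equalizers then yields the isomorphism $\zeta_M:M^{coH}\otimes_B A\xrightarrow{\cong} M$.

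The main obstacle lies in part (iii): one must verify carefully that the classical descent cosimplicial identities continue to hold for the partial coaction on $M$, despite $\gPMod^H$ being only oplax monoidal and failing to be abelian. Part (ii) is likewise technical and requires careful bookkeeping with the translation map $\tau$ to convert the monomorphism condition on $\zeta_M$ into the injectivity of $q_M$.
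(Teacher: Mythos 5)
Parts (i) and (iii) of your plan are essentially the paper's own arguments. For (i), observing that an induced module $N\ot_BA$ is minimal by construction is exactly the paper's chain of identifications $M\ot_A(A\bul H)\cong M^{coH}\ot_B(A\bul H)=(M^{coH}\ot_BA)\bul H\cong M\bul H$. For (iii), your descent-theoretic phrasing (transport the coring structure along $\can$, note that the coaction fork is a split equalizer, compare with the tensored coinvariant equalizer) is the same comparison the paper carries out between the rows $M\to M\bul H\rightrightarrows M\bul H\bul H$ and $M^{coH}\ot_BA\to M\ot_BA\rightrightarrows (M\bul H)\ot_BA$; the cosimplicial identities you flag as the main obstacle are precisely what \leref{coring} supplies for a minimal geometric partial Hopf module, so that step is covered by a result you already cite.

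Part (ii), however, has a genuine gap. First, a structural objection: a left inverse of the surjection $q_M:M\ot_BA\to M\bul H$ is automatically a two-sided inverse, so if your translation-map construction produced one, minimality would follow with no hypothesis on $\zeta_M$ at all --- the plan proves too much, which already signals that it must break down. Concretely, it breaks down where you ``apply $\tau$ pointwise'' to $\sum_i m_ia_{i[0]}\bul a_{i[1]}=0$: this relation holds only in the quotient $M\bul H$ of $M\ot_A(A\bul H)$, whereas $M\ot_A\tau$ is defined on $M\ot_A(A\bul H)$ itself. The lift $\sum_i m_i\ot_A(a_{i[0]}\bul a_{i[1]})$ need not vanish there, and applying $M\ot_A\tau$ to it simply returns $\xi$ (because $\tau\circ\can$ is the identity), yielding no information; descending $\tau$ to $M\bul H$ would require it to annihilate the kernel of the surjection $M\ot_A(A\bul H)\tto M\bul H$, which is exactly the minimality you are trying to prove. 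In addition, the final inference ``$\xi$ lies in the image of $\zeta_M\ot_BA$ and $\zeta_M$ is a monomorphism, hence $\xi=0$'' is not valid as stated: membership in the image of a monomorphism forces nothing to vanish. The paper takes a different route here: it uses the commutative square $\rho_M\circ\zeta_M=q_M\circ(\iota\ot_BA)$, where $\iota:M^{coH}\to M$ is the inclusion, together with the fact that $\rho_M$ is split injective via $M\bul\epsilon$ and that $q_M$ is surjective, to pass from injectivity of $\zeta_M$ to bijectivity of $q_M$. You should rebuild part (ii) from that square rather than from a translation-map inverse.
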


\begin{proof}
\ul{(i)}
If $\zeta_M$ is an isomorphism of partial Hopf modules, then we find a composition of isomorphisms
\[
\hspace{-2cm}
\xymatrix{
M\ot_A(A\bul H) \ar[rr]^-{\zeta^{-1}_M\ot_A(A\bul H)} && (M^{coH}\ot_BA)\ot_A(A\bul H) \cong M^{coH}\ot_B(A\bul H) = (M^{coH}\ot_BA)\bul H \ar[r]^-{\zeta_M\bul H} & M\bul H
}
\]
and hence $M$ is minimal.\\
\ul{(ii)}. Consider the following commutative diagram
\[
\xymatrix{
M^{coH}\ot_BA \ar[rr] \ar[d]^{\zeta_M} && M\ot_BA \ar[d] \\
M \ar[rr]^-{\rho_M} && M\bul H 
}
\]
Since $M\ot_BA\cong M\ot_A(A\ot_BA)\cong M\ot_A(A\bul H)$, we know that the right vertical arrow is an epimorphism (see \leref{MbulH}).
If $A$ is flat as left $B$-module then the upper horizontal arrow is injective, and the lower horizontal arrow is injective as it has a left inverse $M\bul\epsilon$.  Hence if $\zeta_M$ is injective, we find that  $M\ot_BA\to M\bul H$ is an isomorphism, i.e. $M$ is minimal.\\
\ul{(iii)}.
Since $A/B$ is Hopf-Galois, we find obtain an isomorphism
\[
\xymatrix{
M\ot_BA \ar[r]^-\cong & M\ot_A(A\ot_BA) \ar[rr]^-{M\ot_A\can} && M\ot_A(A\bul H)
}
\]
And therefore, if $M$ is minimal we find that $M\ot_BA\cong M\ot_A(A\bul H)=M\bul H$. 
Consider now the following diagram
\[
\xymatrix{
M^{coH}\ot_BA \ar[rr] \ar@<.5ex>[d]^{\zeta_M} && M\ot_BA \ar[d]^\cong \ar@<.5ex>[rr]^-{\rho_M\ot_BA} \ar@<-.5ex>[rr]_-{\pi_M\circ (M\ot\eta_H)\ot_BA} && M\bul H\ot_BA\\
M \ar[rr]^-{\rho_M} \ar@<.5ex>[u]^{\zeta'_M} && M\bul H \ar@<.5ex>[rr]^{\rho_M\bul H} \ar@<-.5ex>[rr]_-{\ol{M\bul\Delta}} && M\bul H\bul H \ar@{->>}[u]_{p_M}
}
\]
By assumption, the upper row in this diagram is an equalizer. Since $M$ is geometric, the fork on the lower row splits and hence is also an equalizer. The surjective morphism $p_M$ is obtained from \leref{coring} and induces the morphism $\zeta'_M$. Then a diagram chasing argument shows that $\zeta_M$ and $\zeta'_M$ are mutual inverses.
\end{proof}

The following result subsumes the Hopf-Galois theory for partial coactions in the sense of Caenepeel-Janssen \cite{CJ}.

\begin{corollary}\colabel{HopfGalois}
Suppose that $A$ a partial $H$-comodule algebra that is geometric as partial comodule.
If $A/B$ is a partial Hopf-Galois extension and either
\begin{itemize}
\item $A$ is pure as left $B$-module and $B\subset Z(A)$;
\item $A$ is faithfully flat as left $B$-module;
\end{itemize}
then $\Mod_{A^{coH}}$ is equivalent to the category full subcategory of $\PMod_A^H$ consisting of
minimal geometric partial Hopf modules.
\end{corollary}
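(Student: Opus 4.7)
The plan is to show that, under the given hypotheses, both the unit $\nu$ and the counit $\zeta$ of the adjunction from \prref{Galoisadjunction} (composed with restriction of scalars along $B\to A^{coH}$) become natural isomorphisms when we restrict to the full subcategory of minimal geometric partial Hopf modules. The argument assembles \prref{unitiso} and \prref{counitiso} together with the observation (made in the remarks after \prref{Galoisadjunction}) that $N\otimes_BA$ is always geometric whenever $A$ is, and the observation (following \prref{unitiso}) that induced Hopf modules are automatically minimal because $(N\otimes_BA)\bullet H=N\otimes_B(A\bullet H)\cong (N\otimes_BA)\otimes_A(A\bullet H)$.

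First I would verify that in either case $\iota\colon B\to A$ is pure as a left $B$-module: this is immediate if $A$ is faithfully flat over $B$, and under the other hypothesis it follows since for any right $B$-module $P$ one has $P\cong P\otimes_BB\subset P\otimes_BA$ by purity of $\iota$ as a left $B$-linear map (which, assuming $B\subset Z(A)$, makes sense on both sides). Then \prref{unitiso} yields $B\cong A^{coH}$ and tells us the unit $\nu_N\colon N\to (N\otimes_BA)^{coH}$ is a natural isomorphism for every $N\in\Mod_B=\Mod_{A^{coH}}$. In particular, $-\otimes_BA\colon\Mod_B\to \PMod_A^H$ is fully faithful.

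Next I would identify its essential image inside the subcategory of minimal geometric partial Hopf modules. Every induced module $N\otimes_BA$ is minimal and geometric by the two remarks recalled above, so the adjunction restricts to an adjunction
\[
-\otimes_BA\;:\;\Mod_B\;\rightleftarrows\;\gPMod_A^{H,\min}\;:\;(-)^{coH},
\]
where $\gPMod_A^{H,\min}$ denotes the full subcategory of minimal geometric partial Hopf modules. The unit of this restricted adjunction is still a natural isomorphism.

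Finally, for the counit, let $M$ be any object of $\gPMod_A^{H,\min}$. In both hypothesis cases, the functor $-\otimes_BA$ preserves the equalizer \equref{coinv} defining $M^{coH}$: in the faithfully flat case this is automatic, and in the pure/center case this is exactly the assumption isolated before \prref{counitiso}. Since $A/B$ is partial Hopf--Galois, $M$ is minimal and geometric, \prref{counitiso}(iii) then gives that $\zeta_M\colon M^{coH}\otimes_BA\to M$ is an isomorphism. Together with the unit isomorphism, this yields the desired equivalence of categories. The main technical point is \prref{counitiso}(iii); everything else is assembly of already-proved facts.
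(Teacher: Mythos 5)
Your proposal is correct and follows essentially the same route as the paper: observe that $-\ot_BA$ lands in the minimal geometric partial Hopf modules (by the remark after \prref{Galoisadjunction} and the identification $(N\ot_BA)\bul H=N\ot_B(A\bul H)$), then invoke \prref{unitiso} for the unit and \prref{counitiso}(iii) for the counit. The paper's own proof is just a two-line version of this same assembly, so nothing further is needed.
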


\begin{proof}
Since $A$ is geometric as partial comodule, the functor $-\ot_BA:\Mod_{A^{coH}}\to \PMod_A^H$ lands in the category of minimal geometric partial Hopf modules. By \prref{unitiso} and \prref{counitiso} we then obtain the stated equivalence of categories. 
\end{proof}

As we have remarked before, the functor $-\ot_BA:\Mod_{A^{coH}}\to \PMod_A^H$ cannot be expected to become an equivalence of categories. More precisely, it follows from \prref{counitiso} that we cannot expect that the functor $(-)^{coH}$ is full whenever it is applied to non-minimal partial Hopf modules.
We will finish our work by characterizing under which conditions this functor remains however faithful.

\begin{proposition}\prlabel{final}
Under the same conditions as in \coref{HopfGalois}, $A$ is a generator in $\PMod_A^H$ if and only if the functor $(-)^{coH}:\PMod_A^H\to \Mod_{A^{coH}}$ is faithful. 
\end{proposition}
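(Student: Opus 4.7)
The plan is to exploit the fact that $(-)^{coH}$ is, up to natural isomorphism, a representable functor, so that the stated equivalence reduces to a purely categorical fact about generators. Concretely, Lemma \ref{le:coinvHom} already shows that for every partial $(A,H)$-relative Hopf module $M$ there is a canonical bijection
\begin{equation*}
M^{coH} \;\cong\; \Hom_A^H(A,M), \qquad m\mapsto f_m,\ f_m(a)=ma,
\end{equation*}
with inverse $f\mapsto f(1_A)$. First, I would verify that this bijection is natural in $M$: for a morphism $g:M\to N$ in $\PMod_A^H$, the induced map $g^{coH}:M^{coH}\to N^{coH}$ sends $m$ to $g(m)$, and under the above bijection $g(m)$ corresponds to $f_{g(m)}=g\circ f_m$, which is exactly the image of $f_m$ under $\Hom_A^H(A,g)$. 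Hence the coinvariant functor coincides (up to natural isomorphism) with the $\Hom_{\PMod_A^H}(A,-)$ functor, the $A^{coH}$-module structure on $\Hom_A^H(A,M)$ being induced by right multiplication by $A^{coH}\subset A$ on the source copy of $A$.

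Next, I would appeal to the elementary categorical principle that, in any category, an object $X$ is a generator if and only if the representable functor $\Hom(X,-)$ is faithful. Indeed, unwinding definitions, $X$ is a generator precisely when for every pair $f\ne g\colon M\to N$ there exists $h\colon X\to M$ with $f\circ h\ne g\circ h$, which is exactly the statement that the map $\Hom(X,f)\ne\Hom(X,g)$ whenever $f\ne g$, i.e.\ that $\Hom(X,-)$ is faithful. Applied to $X=A$ in the category $\PMod_A^H$ this yields: $A$ is a generator in $\PMod_A^H$ if and only if $\Hom_A^H(A,-)$ is faithful.

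Combining the two ingredients gives the proposition: faithfulness of $(-)^{coH}:\PMod_A^H\to\Mod_{A^{coH}}$ is equivalent, via the natural isomorphism above, to faithfulness of $\Hom_A^H(A,-):\PMod_A^H\to \Set$ (composing with the faithful forgetful functor $\Mod_{A^{coH}}\to\Set$ loses no information on faithfulness), which is in turn equivalent to $A$ being a generator in $\PMod_A^H$. There is no real obstacle in the argument; the only mildly delicate point is the bookkeeping to make sure that the natural isomorphism of Lemma~\ref{le:coinvHom} is compatible with the $A^{coH}$-module structure on both sides, so that one can legitimately detect faithfulness of $(-)^{coH}$ at the level of underlying sets. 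The hypotheses inherited from Corollary~\ref{co:HopfGalois} play no role in this particular equivalence; they are only the standing assumptions that give meaning to the surrounding Hopf-Galois framework.
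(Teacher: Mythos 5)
Your proof is correct, but it takes a different route from the one in the paper. You exploit the representability statement of \leref{coinvHom} directly: after checking naturality, $(-)^{coH}$ composed with the (faithful) forgetful functor to $\Set$ is naturally isomorphic to $\Hom_{\PMod_A^H}(A,-)$, and ``$A$ is a generator'' is by definition the faithfulness of this representable functor, so the equivalence is immediate. The paper instead uses the coproduct characterization of generators (the canonical morphism $\phi_M:\coprod_{\Hom_A^H(A,M)}A\to M$ must be surjective for all $M$), factors $\phi_M=\zeta_M\circ\alpha_M$ through the counit $\zeta_M$ of the adjunction of \prref{Galoisadjunction} with $\alpha_M$ surjective, and invokes the fact that a right adjoint is faithful if and only if the counit is a natural epimorphism. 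Both arguments hinge on the same identification $M^{coH}\cong\Hom_A^H(A,M)$; yours is shorter and purely formal, and it makes transparent your (correct) observation that the Hopf--Galois hypotheses of \coref{HopfGalois} are not actually used. The paper's route has the advantage of expressing the generator property as surjectivity of $\zeta_M$, which ties the result to the analysis of $\zeta_M$ in \prref{counitiso} and to the discussion of $\ker\zeta_M$ in the remark that follows; your route bypasses the coproduct characterization and the (minor) issue of identifying epimorphisms in $\PMod_A^H$ with surjections. Your naturality check ($f_{g(m)}=g\circ f_m$ using right $A$-linearity of $g$) is the only computation needed and it is done correctly.
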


\begin{proof}
Recall that $A$ is a generator in $\PMod_A^H$ if and only if for any object $M$ in $\PMod_A^H$ the canonical morphism
\[\phi_M:\coprod_{\Hom_A^H(A,M)}A\to M,\ \phi(a_f)=\sum_f f(a_f)\]
is surjective. Here $\coprod_{\Hom_A^H(A,M)}A$ denotes a coproduct of copies of $A$ indexed by the set $\Hom_A^H(A,M)$.

Recall from \leref{coinvHom} that $M^{coH}=\Hom_A^H(A,M)$ for any 
object $M$ in $\PMod_A^H$. Hence we obtain a well-defined morphism
\[\alpha_M:\coprod_{\Hom_A^H(A,M)}A \to M^{coH}\ot_{A^{coH}}A,\ \alpha_M(a_f)=f(1_A)\ot_{A^{coH}} a_f,\]
which is clearly surjective.

One now easily sees that $\phi_M=\zeta_M\circ \alpha_M$. Hence $\phi_M$ is surjective if and only if $\zeta_M$ is surjective. Finally, it is well-know that a right adjoint functor is faithful if and only if the counit of the adjunction is a natural epimorphism. 
\end{proof}

\begin{remark}
Under the conditions of \prref{final}, we know that for an object $M$ in  $\PMod_A^H$, the partial Hopf module morphism $\zeta_M:M^{coH}\ot_BA\to M$ is surjective. Hence, we find that $M\cong M^{coH}\ot_BA/\ker\zeta_M$ as right $A$-module. However, as we remarked earlier, $\ker\zeta_M$ is not necessarily a partial $H$-comodule. We then know from \exref{quotientpartial} that $M\ot_BA/\ker\zeta_M$ is a geometric partial comodule. Then $\zeta_M$ induces a morphism of partial Hopf modules $\zeta'_M: M\ot_BA/\ker\zeta_M\to M$, such that the underlying $A$-module morphism is an isomorphism. However, $\zeta'_M$ is not necessarily an isomorphism of partial Hopf modules, since in general $(M\ot_BA/\ker\zeta_M)\bul H$ and $M\bul H$ can be different. Therefore consider the following definition.
\begin{quote}
Let $(X,X\bul H,\pi_X,\rho_X)$ and $(Y,Y\bul H,\pi_Y,\rho_Y)$ be two partial $H$-comodule data. Let $f:X\to Y$ be a morphism in $\Cc$. Then consider the pushout
\[
\xymatrix{
X\ot H \ar[d]_{\pi_X} \ar[r]^-{f\ot H} & Y\ot H \ar[r]^-{\pi_Y} & Y\bul H \ar[d]^{p_Y}\\
X\bul H \ar[rr]_-{p_X} && P_{X,Y}  \ar@{}[ul]|<<<{\pushoutv}
}
\]
With this notation, $f$ is said to be a {\em weak morphism} of partial comodules, if the following diagram commutes
\[
\xymatrix{
X\ar[rr]^-f \ar[dd]^{\rho_X} && Y \ar[d]^{\rho_Y}\\
&& Y\bul H \ar[d]^{p_Y}\\
X\bul H \ar[rr]_{p_X} && P_{X,Y} 
}
\]
\end{quote}
Then the $A$-linear inverse of $\zeta'_M$ will be a weak morphism of partial comodules that is moreover a $2$-sided inverse of the (strong) morphism $\zeta'_M$. This motivates that weak morphisms of (geometric) partial comodules might be better behaved than the strong morphisms we studied in this work. We will investigate this further in future work.
\end{remark}

Let us finish by proving result which completely characterizes the image of the functor $-\ot_{A^{coH}}A:\Mod_{A^{coH}}\to \PMod_A^H$.

\begin{theorem}
\begin{enumerate}[(i)]
\item If $A$ and $A\bul H$ are flat as left $A^{coH}$-module (e.g.\ $A$ is flat as left $A^{coH}$-module and $A/A^{coH}$ is $H$-Galois), then the functor $-\ot_{A^{coH}}A:\Mod_{A^{coH}}\to \PMod^H_A$ preserves equalizers.
\item If $A$ is faithfully flat as left $A^{coH}$-module then the functor $-\ot_{A^{coH}}A:\Mod_{A^{coH}}\to \PMod^H_A$ reflects isomorphisms.
\item If $A$ is faithfully flat as left $A^{coH}$-module and $A/A^{coH}$ is partially Hopf-Galois, then the category of $A^{coH}$-modules is equivalent to the Eilenberg-Moore category $(\PMod_A^H)^\CC$, where $\CC$ is the comonad associated to the adjoint pair of \prref{Galoisadjunction}.
\end{enumerate}
\end{theorem}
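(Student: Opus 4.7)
The three parts build on each other: (i) and (ii) supply flatness-type preservation and reflection properties for $L=-\ot_{A^{coH}}A$, and (iii) feeds them into the dual Beck comonadicity theorem to upgrade the adjunction $L\dashv R$ of \prref{Galoisadjunction} into a comonadic one.

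For (i), I would reduce equalizers in the abelian category $\Mod_{A^{coH}}$ to kernels. Given $f\colon N\to N'$ in $\Mod_{A^{coH}}$ with kernel $\iota\colon K\hookrightarrow N$, flatness of $A$ over $A^{coH}$ gives that $\iota\ot_{A^{coH}}A\colon K\ot_{A^{coH}}A\hookrightarrow N\ot_{A^{coH}}A$ identifies $K\ot_{A^{coH}}A$ with the $k$-linear kernel of $f\ot_{A^{coH}}A$. Flatness of $A\bul H$ does the analogous job one level up, so that $K\ot_{A^{coH}}(A\bul H)\hookrightarrow N\ot_{A^{coH}}(A\bul H)=(N\ot_{A^{coH}}A)\bul H$ is injective; combined with the induced partial comodule datum $(K\ot_{A^{coH}}A,\,K\ot_{A^{coH}}\pi_A,\,K\ot_{A^{coH}}\rho_A)$, \prref{YinX} shows that $K\ot_{A^{coH}}A$ is a partial subcomodule of $N\ot_{A^{coH}}A$, and it is obviously also an $A$-submodule. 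Universality in $\PMod_A^H$ is then automatic: any morphism $g\colon P\to N\ot_{A^{coH}}A$ equalised by $(f\ot_{A^{coH}}A)$ factors uniquely through $K\ot_{A^{coH}}A$ at the vector-space level by flatness, and because the target is a partial subcomodule and $A$-submodule, this factorisation is itself a morphism in $\PMod_A^H$. For the parenthetical hint, partial Hopf--Galoisness gives an isomorphism of $A^{coH}$-modules $\can\colon A\ot_{A^{coH}}A\to A\bul H$, so $A\bul H$ is a tensor product of flat $A^{coH}$-modules and is again flat.

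For (ii), assume $A$ is faithfully flat over $A^{coH}$ and let $f\colon N\to N'$ in $\Mod_{A^{coH}}$ be such that $f\ot_{A^{coH}}A$ is an isomorphism in $\PMod_A^H$. Since every isomorphism in $\PMod_A^H$ has an underlying two-sided inverse in $\Vect_k$, the $k$-linear map $f\ot_{A^{coH}}A$ is a bijection; faithful flatness of $A$ then forces $f$ itself to be a bijection, hence an isomorphism of $A^{coH}$-modules.

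For (iii), I verify the hypotheses of the comonadic form of Beck's theorem for the adjunction $L\dashv R$ of \prref{Galoisadjunction}, whose induced comonad on $\PMod_A^H$ is exactly $\CC=LR$. Faithful flatness implies purity of $A^{coH}\hookrightarrow A$, so by \prref{unitiso} the unit $\nu$ of the adjunction is a natural isomorphism and $L$ is fully faithful; this already gives that the comparison functor $K\colon\Mod_{A^{coH}}\to(\PMod_A^H)^{\CC}$, $N\mapsto(LN,L\eta_N)$, is fully faithful. Conservativity of $L$ is (ii), and preservation of equalizers—in particular of any $L$-contractible (or absolute) pair—is (i), using that partial Hopf--Galoisness together with faithful flatness makes $A\bul H\cong A\ot_{A^{coH}}A$ flat over $A^{coH}$ as in the remark above. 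Comonadicity then identifies $\Mod_{A^{coH}}$ with the Eilenberg--Moore category $(\PMod_A^H)^{\CC}$; concretely, essential surjectivity sends a $\CC$-coalgebra $(M,\gamma)$ to the equalizer in $\Mod_{A^{coH}}$ of the pair $R\gamma,\,\eta_{RM}\colon RM\rightrightarrows RLRM$, whose image under $L$ recovers $M$ via (i) because $\gamma$ splits the counit at $M$ in a way compatible with this equalizer.

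The main obstacle is the bookkeeping step inside (i): because limits in $\PMod^H$ need not agree with the underlying $k$-linear limits (as emphasised in the remark following \thref{cocomplete}), one has to be careful that $K\ot_{A^{coH}}A$ really is the equalizer in $\PMod_A^H$, and not merely a partial subcomodule sitting inside the possibly larger $k$-linear equalizer. It is precisely the second flatness hypothesis on $A\bul H$ that closes this gap, by guaranteeing that the ambient pushout $(N\ot_{A^{coH}}A)\bul H=N\ot_{A^{coH}}(A\bul H)$ restricts cleanly along $K\hookrightarrow N$; once this is secured, (ii) and (iii) are essentially formal consequences via Beck.
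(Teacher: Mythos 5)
Your proposal is correct and follows essentially the same route as the paper: flatness of $A$ over $A^{coH}$ settles the equalizer at the level of $A$-modules, flatness of $A\bul H$ is what lets one upgrade the induced factorization to a morphism in $\PMod_A^H$, part (ii) is the same faithful-flatness argument, and part (iii) is the dual Beck (comonadicity) theorem fed by (i) and (ii). The only real difference is one of packaging: where you declare the factorization through $K\ot_{A^{coH}}A$ ``automatic'' by appeal to \prref{YinX}, the paper carries out that verification explicitly, checking $(u\ot H)(\ker\pi_T)\subset\ker\pi_{E\ot_{A^{coH}}A}$ and the compatibility with $\rho$ by cancelling the monomorphism $e\ot_{A^{coH}}(A\bul H)$ — which is precisely the step your second flatness hypothesis is there to support, so the compression is harmless.
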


\begin{proof}
\ul{(i)}. Consider the following equalizer diagram in $\Mod_{A^{coH}}$:
\[\xymatrix{
E\ar[rr]^e && N \ar@<.5ex>[rr]^f \ar@<-.5ex>[rr]_g && M
}\]
By the flatness of $A$ as a left $A^{coH}$-module, we then know that $(E\ot_{A^{coH}}A,e\ot_{A^{coH}}A)$ is the equalizer of the pair $(f\ot_{A^{coH}}A,g\ot_{A^{coH}}A)$ in $\Mod_A$. However, we have to show that this is also an equalizer in $\PMod^H_A$. 
To this end, consider any partial relative Hopf module $T$ with a morphism $t:T\to N\ot_{A^{coH}}A$ such that $(f\ot_{A^{coH}}A)\circ t = (g\ot_{A^{coH}}A)\circ t$. Then we can apply the forgetful functor $\PMod^H_A\to \Mod_{A}$ and we find that there exists a unique right $A$-linear map $u:T\to E\ot_{A^{coH}}A$ such that $t= e\ot_{A^{coH}}A\circ u$. We will be done if we can show that $u$ is a morphism of partial $H$-comodules. Firstly we will verify that $\pi_{E\ot_{A^{coH}}A}\circ (u\ot H)(\ker\pi_T)=0$ (cf.\ \leref{comodulemorphismabelian}). Since $A\bul H$ is flat as a left $A^{coH}$-module and $e$ is an injective map (being an equalizer in a module category), it is equivalent to check that 
\[(e\ot_{A^{coH}}(A\bul H))\circ \pi_{E\ot_{A^{coH}}A}\circ (u\ot H)(\ker\pi_T)=0.\]
By the functoriality of $-\ot_{A^{coH}}A:\Mod_{A^{coH}}\to \PMod_A^H$, we obtain that $e\ot_{A^{coH}}A$ is a morphism of partial comodules and $(e\ot_{A^{coH}}(A\bul H))=(e\ot_{A^{coH}}A)\bul H$). This allows us to rewrite the left hand side of the last equality as
\[\pi_{N\ot_{A^{coH}}A}\circ (e\ot_{A^{coH}}A\ot H)\circ (u\ot H)(\ker\pi_T)
= \pi_{N\ot_{A^{coH}}A}\circ (t\ot H)(\ker\pi_T)=0\]
where the second equality is the defining property of $u$ and the last equality follows from the fact that $t$ is a morphism of partial comodules. Hence the map $u\bul H:T\bul H\to E\ot_{A^{coH}}(A\bul H)$ is well defined and the unique map satisfying $(u\bul H)\circ \pi_T=\pi_{E\ot_{A^{coH}}A}\circ (u\ot H)$. Then we find
\begin{eqnarray*}
((e\ot_{A^{coH}}A)\bul H) \circ(u\bul H)\circ \pi_T&=& ((e\ot_{A^{coH}}A)\bul H) \circ \pi_{E\ot_{A^{coH}}A}\circ (u\ot H)\\
&=& \pi_{N\ot_{A^{coH}}A}\circ  (e\ot_{A^{coH}}A)\ot H \circ (u\ot H)\\
&=& \pi_{N\ot_{A^{coH}}A}\circ (t\ot H) = (t\bul H)\circ \pi_T
\end{eqnarray*}
where we used the fact that $e\ot_{A^{coH}}A$ is a morphism of partial comodules in the second equality, $t= e\ot_{A^{coH}}A\circ u$ in the third equality and the fact that $t$ is a morphism of partial comodules in the last equality. By the surjectivity of $\pi_T$, it follows now that
$(t\bul H)=(e\ot_{A^{coH}}A\bul H)\circ (u\bul H)$.
For $u$ to be a partial comodule morphism, it remains to check that $(u\bul H)\circ \rho_T=\rho_{E\ot_{A^{coH}}A}\circ u$. Again, using the flatness of $A\bul H$ a left $A$-module and the injectivity of $e$ it is sufficient to check that the compositions of the se maps with $e\ot_{A^{coH}}(A\bul H)$ are equal. Using the fact that $t$ and $e$ are partial module morphisms, we can indeed prove that
\begin{eqnarray*}
(e\ot_{A^{coH}}(A\bul H))\circ (u\bul H)\circ \rho_T 
&=& (t\bul H)\circ \rho_T =\rho_{N\ot_{A^{coH}}A}\circ t\\
&=& \rho_{N\ot_{A^{coH}}}\circ e\ot_{A^{coH}}A\circ u\\
&=& (e\ot_{A^{coH}}(A\bul H))\circ \rho_{E\ot_{A^{coH}}A}\circ u
\end{eqnarray*}
Hence $u$ lives already in $\PMod^H_A$ and therefore $(E\ot_{A^{coH}}A,e\ot_{A^{coH}}A)$ satisfies the universal property of the equalizer in $\PMod^H_A$.\\
\ul{(ii)}. Let $f:M\to N$ be a morphism in $\Mod_{A^{coH}}$ such that $f\ot_{A^{coH}}A$ is an isomorphism in $\PMod_A^H$. Then $f\ot_{A^{coH}}A$ is also an isomorphism in $\Mod_A$, and since $A$ is faithfully flat as left $A^{coH}$-module, we find that $f$ is an isomorphism in $\Mod_{A^{coH}}$.\\
\ul{(iii)}. This follows immediately from the previous two parts by the dual of Beck's monadicity theorem, see e.g.\ \cite{tt} or \cite[Theorem 2.7]{GT:comonad}.
\end{proof}

\begin{remark}
The proof of part (i) in the previous theorem can be adapted to show that the functor $-\ot_{A^{coH}}A:\Mod_{A^{coH}}\to \PMod_A^H$ preserves arbitrary limits.
\end{remark}

\subsection*{Acknowledgements}  
The first author wants to thank CSC (China Scholarship Council) for a PhD-student fellowship. He also thanks the ``Fonds David et Alice Van Buuren'' for the prize that allowed him to finish his PhD in Brussels. The second author wants to thank the ``F\'ed\'eration Wallonie-Bruxelles'' for the ARC grant ``Hopf algebras and the Symmetries of Non-commutative Spaces'' that supported this work.

The second author wants to thank Ana Agore for a discussion concerning the completeness of comodule categories and Mitchell Buckley for useful discussions about categorical aspects of spans, bicategories and pushouts. Both authors thank Eliezer Batista and Paolo Saracco for interesting discussions on partial actions and coactions. 

We thank the anonymous referee for this careful reading of the paper and the many useful suggestions that improved the presentation of this paper, as well for pointing out some inaccuracies.

\end{document}